\newtheorem{theorem}{Theorem}[section]
\newtheorem{lemma}{Lemma}[section]
\newtheorem{prop}{Proposition}[section]
\newtheorem{remark}{Remark}[section]
\numberwithin{equation}{section}
\DeclareMathOperator{\diver}{div}
\DeclareMathOperator{\curl}{curl}
\newcommand{\R}{\ensuremath{\mathbb{R}}}
\newcommand{\C}{\ensuremath{\mathbb{C}}}
\newcommand{\N}{\ensuremath{\mathbb{N}}}
\newcommand{\ip}[2]{\langle#1\hspace*{.5mm},#2\rangle}
\newcommand{\norm}[3][]{#1\left\|#2#1\right\|_{#3}}
\newcommand{\ep}{\varepsilon}
\newcommand{\ex}{\mathrm{ex}}
\renewcommand{\u}{\mathbf{u}}
\newcommand{\A}{\mathbf{A}}
\newcommand{\pmin}{\ensuremath{\rho_\varepsilon}}
\newcommand{\xfield}{\ensuremath{\xi_\varepsilon}}
\newcommand{\hfield}{\ensuremath{h_\varepsilon^0}}
\newcommand{\fen}{\ensuremath{F_{\varepsilon, \pmin}}}
\newcommand{\critfield}{\ensuremath{H^{\varepsilon}_{c_1}}}
\definecolor{purple}{rgb}{0.63, 0.36, 0.94}
\title[Meissner state for inhomogeneous superconductors]{On the Meissner state for type-II inhomogeneous superconductors}
\author{Mat\'{i}as D\'{i}az-Vera}
\address{Facultad de Matem\'aticas, Pontificia Universidad Cat\'olica de Chile, Vicu\~na Mackenna 4860, 7820436 Macul, Santiago, Chile}
\email{midiaz8@uc.cl}
\author{Carlos Rom\'{a}n}
\address{Facultad de Matem\'aticas e Instituto de Ingenier\'ia Matem\'atica y Computacional, Pontificia Universidad Cat\'olica de Chile, Vicu\~na Mackenna 4860, 7820436 Macul, Santiago, Chile}
\email{carlos.roman@uc.cl}
\date{\today}
\keywords{Ginzburg--Landau, pinning, inhomogeneous, Meissner solution, critical field, energy minimizers, vortices}
\subjclass{35Q56 (35J20, 35B40, 82D55)}
\thanks{Funding information: ANID FONDECYT 1231593.}
\begin{document}
\begin{abstract} We consider extreme type-II superconductors modeled by the Ginzburg--Landau energy with a pinning term $a_\ep(x)$, which we assume to be a bounded measurable function such that $b\leq a_\ep(x)\leq 1$ for some constant $b>0$. A crucial feature of this type of  superconductors is the occurrence of vortices, which appear above the so-called first critical field $H_{c_1}$. In this paper we estimate this value and characterize the behavior of the Meissner solution, the unique vortexless configuration that globally minimizes the energy below $H_{c_1}$. In addition, we show that beyond this value, for applied fields whose strength is slightly below the so-called superheating field $H_{sh}$, there exists a unique Meissner-type solution that locally minimizes the energy.
\end{abstract}
\maketitle 

\section{Introduction}
    \subsection{The problem and brief state of the art}
    Superconductors are materials that can exhibit a complete loss of electrical resistance when cooled below a critical temperature (typically very low). Superconductivity was discovered by Kamerlingh Onnes in 1911. The two most striking features of it are the possibility of \emph{permanent superconducting currents} and the expulsion of applied magnetic fields, which in turn leads to \emph{superconducting levitation}.
    In type-II superconductors, the normal and superconductivity phases may coexist in the material. Indeed, a key physical feature of type-II superconductivity is the occurrence of \emph{vortices} (similar to those in fluid mechanics but quantized), in the presence of an applied magnetic field. In these regions, the external field penetrates the material, and the superconductivity is lost. 
    
    These vortices may move because of internal interactions and external forces. Their motion generates an electric field that dissipates energy and, in turn, generates an electrical resistance, thus losing superconductivity in the material. One way to control the motion of the vortices is to introduce inhomogeneities into the material, which provide \emph{pinning sites} for the vortices. We refer to \cites{chapman-richardson, chapman-du-gunzburguer-variable-thickness, chapman-richardson-pinning,ding-du} and the references therein for more details of the physics of pinned superconductors.

    \medskip
    The behavior of pinned superconductors is modeled by the famous Ginzburg--Landau model of superconductivity (with pinning), which is defined by 
    \begin{equation}\label{GLenergy}
    GL_\varepsilon(u,A) \colonequals \frac{1}{2} \int_\Omega |\nabla_A u|^2 + \frac{\left(a_{\varepsilon}(x)-|u|^2\right)^2}{2\varepsilon^2}+|h-h_{\ex}|^2 .
    \end{equation}
    Here
    \begin{itemize}
        \item $\Omega\subset \R^2$ is a smooth, bounded, and simply connected domain.
        \item $u:\Omega \to \C$ is the order parameter. Its squared modulus (the density of Cooper pairs of superconducting electrons in the Bardeen--Cooper--Schrieffer (BCS) quantum theory \cite{BCS}) indicates the local state (normal or superconducting) of the material.
        \item $A:\Omega \to \R^2$ is the electromagnetic vector potential of the induced magnetic field $h= \operatorname{curl} A\colonequals \partial_{x_1}A_2-\partial_{x_2}A_1$.
        \item $\nabla_A$ denotes the covariant gradient $\nabla - iA$.
        \item $h_{\ex}>0$ is a constant that represents the intensity of the external magnetic field in the direction perpendicular to $\Omega$.
        \item $\ep>0$ is the inverse of the \emph{Ginzburg--Landau parameter} usually denoted $\kappa$, a nondimensional parameter depending only on the material. We will be interested in the regime of small $\ep$, corresponding to extreme type-II superconductors. 
        \item $a_{\varepsilon}$ is a function that accounts for inhomogeneities in the material. We will assume that $a_\ep \in L^\infty(\Omega)$ and that it takes values in $[b,1]$, where $b\in (0,1)$ is a constant independent of $\varepsilon$. The regions where $a_\varepsilon = 1$ correspond to sites without inhomogeneities (we also say that there is no pinning in these regions). 
    \end{itemize}
    
    Both the mathematics and physics literature on the effect of pinning in the Ginzburg--Landau model of superconductivity is quite extensive. Without aiming for a full bibliographic review, we next mention a few of the pinning-type functions that have been considered in the literature:
    \begin{itemize}
        \item Models where $a_\varepsilon$ is smooth were studied in \cite{aftalion2000pinning}, where, in addition to our assumption, $a_\varepsilon$ homogenizes in the sense of $H$-convergence as $\varepsilon \to 0$, in \cite{andre-bauman-phillips}, where the pinning term has a finite number of zeroes, and in \cite{aftalion-alama-bronsard}, where the pinning term is radial and is allowed to be negative.
        
        \item Models where $a_\varepsilon$ is a step function taking only two values (say $b$ and $1$) were studied in \cites{lassoued-mironescu,DS2,DS3,DS4,DS5} in the pinned Ginzburg--Landau model without external magnetic field, in \cites{kachmar,kachmar2}, where $\Omega = B(0,1)$ and $a_{\varepsilon}^{-1}(b)$ is an annulus, and in \cite{santos2019magnetic}, where the region $a_{\varepsilon}^{-1}(b)$ is periodic and shrinks as $\varepsilon \to 0$. 

        \item A model where $a_\varepsilon$ is an oscillating periodic function under the effect of a random ergodic stationary action was studied in \cite{santos2022ginzburglandau}.
    \end{itemize}
    It is worth pointing out that, in some of these models, we expect the minima of $a_{\varepsilon}$ to be pinning sites for the vortices. There is computational evidence in \cites{du-gunzburger-peterson,chapman-du-gunzburger} and a proof in \cite{sigal-ting} for $h_{\ex} = 0$ and a sufficiently small and smooth pinning term.

    \medskip
    The homogeneous case $a_\varepsilon \equiv 1$ in $\Omega$ corresponds to the celebrated Ginzburg--Landau functional proposed by Ginzburg and Landau \cite{GinLan}. This model has been extensively studied by analysts after the seminal work \cite{bethuel-brezis-helein} on the functional without magnetic field ($h_{\ex} = A = 0$). We refer to the classical book \cite{libro-ss} for an extensive mathematical review of the model with magnetic field and to \cite{introduction-superconductivity} for a more physics-oriented study of superconductivity and vortex pinning.

    \medskip
    An important feature of the Ginzburg--Landau model is that all the physically meaningful quantities are gauge-invariant, which means that they are preserved under the gauge transformation
    $$(u,A) \to (ue^{i\phi}, A+\nabla \phi).$$
    The energy $GL_\varepsilon$ and the free energy with (or without) weight $\eta_\varepsilon \colon \Omega \to \R$ defined via
    \begin{equation}\label{free-energy-weight}F_{\varepsilon, \eta_\varepsilon}(u,A) = \frac{1}{2}\int_\Omega \eta_\varepsilon^2 |\nabla_A u|^2 + \eta_{\varepsilon}^4 \frac{(1-|u|^2)^2}{2\varepsilon^2}+ |\curl A|^2 \end{equation}
    are gauge-invariant, as well as the induced magnetic field $h$, the density of Cooper-pairs of electrons $|u|$, the superconducting current $\ip{iu}{\nabla_A u}$ and the vorticity measure, defined by
    $$\mu(u,A) = \curl \ip{iu}{\nabla_A u} + h,$$
    where $\ip{\cdot}{\cdot}$ is the scalar product of $\C$ identified with $\R^2$, that is, $\ip{z}{w} = \frac{z\overline{w}+\overline{z}w}{2}$. We will denote by $F_\ep(u,A)$ the free energy without weight (that is, when $\eta_\ep\equiv 1$ in $\Omega$). One particular gauge-choice, so called \emph{Coulomb gauge}, is the one for which
    \begin{equation}\label{coulomb gauge}
            \left\{
            \begin{array}{rcll}
            -\diver A &=& 0 &\mathrm{in}\ \Omega\\
            A \cdot \nu &=& 0&\mathrm{on}\ \partial \Omega.
            \end{array}
            \right.
        \end{equation}
        
    \medskip

    It is well known that as the intensity of the external magnetic field $h_\ex$ is tuned, type-II superconductors undergo several phase transitions. There are three main critical values $H_{c_1}, H_{c_2}$ and $H_{c_3}$ for $h_{\ex}$, called (main) \emph{critical fields}, where phase transitions occur: When $h_{\ex} < H_{c_1}$, the material is everywhere in its superconducting phase, that is, $|u|$ is uniformly close to $1$, and the applied field is expelled by the material due to the occurrence of supercurrents near $\partial\Omega$. This phenomenon is known as \emph{Meissner effect}. When $h_{\ex} \geq H_{c_1}$, the external magnetic field penetrates the material and vortices start to appear, and as $h_{\ex}$ increases, so does the number of vortices. Near $H_{c_2}$, superconductivity is lost in the bulk of the material and when $H_{c_2} < h_{\ex} < H_{c_3}$, superconductivity remains only near the boundary. When $h_{\ex} > H_{c_3}$, superconductivity is lost, as the external magnetic field has completely penetrated the material.

    \subsection{Main results}
    A main purpose of this paper is to provide a precise approximation of the \emph{Meissner state} (or configuration), that is, the unique (modulo gauge-invariance) solution of the Ginzburg--Landau equations without vortices, which, in turn, allows us for providing an estimate for the main order of the first critical field, which is of order $O(|\log\ep|)$, and to show that the Meissner solution is stable for values of the intensity of the applied field close to the so-called \emph{superheating} field, which is of order $O(\ep^{-1})$.

    Before precisely stating our main results, let us introduce the configuration 
    \begin{equation}\label{Meissneraprox}
        (\rho_\ep,h_\ex A_\ep^0)\colonequals \left(\pmin, h_{\ex} \frac{-\nabla^\perp \xfield}{\pmin^2} \right),
    \end{equation}
    which corresponds to our approximation of the Meissner state. Here
    \begin{itemize}[]
    \item $\pmin$ is the unique positive real-valued minimizer in $H^1(\Omega,\C)$ of the pinned Ginzburg--Landau energy functional without magnetic field
    \begin{equation}\label{energyLM}
    E_\varepsilon(u) \colonequals \frac{1}{2}\int_\Omega |\nabla u|^2 + \frac{(a_\varepsilon-|u|^2)^2}{2\varepsilon^2}.\end{equation}
    It satisfies the Euler--Lagrange equation
    \begin{equation}\label{pde rho}
            \left\{
            \begin{array}{rcll}
            -\Delta \pmin &=& \dfrac{\pmin(a_\varepsilon-\pmin^2)}{\varepsilon^2} &\mathrm{in}\ \Omega\\
            \dfrac{\partial \pmin}{\partial \nu} &=& 0&\mathrm{on}\ \partial \Omega,
            \end{array}
            \right.
        \end{equation}
    which combined with the maximum principle, yields $\sqrt{b}\leq \rho_\ep\leq 1$. In addition, by taking a constant as an energy competitor, we find the a priori bound 
    \begin{equation}\label{rho energy bound}
        E_\varepsilon(\pmin) \leq  \frac{C}{\varepsilon^2}.
    \end{equation}
    \item $\xfield$ is the unique solution in $H_0^1(\Omega)$ to
    \begin{equation}\label{pde xi}
    \left\{
    \begin{array}{rccl}
        -\diver \left(\dfrac{\nabla \xfield}{\pmin^2}\right)+\xfield&=&1&\mbox{in }\Omega\\
        \xfield&=&0&\mbox{on }\partial\Omega.
    \end{array}
    \right.
    \end{equation}
    Since $\sqrt{b}\leq \rho_\ep\leq 1$, the maximum principle yields $0\leq \xi_\ep\leq 1$. Furthermore, observe that the differential operator associated with $\xfield$ is uniformly elliptic. The classical result of Meyers \cite{meyers}*{Theorem 1.1} thus ensures that $\xfield \in W^{1,p_0}(\Omega)$, for some $p_0 > 2$ that does not depend on $\varepsilon$ and, more importantly, that there exists $C = C(\Omega,b)>0$ such that 
    \begin{equation*}%
        \norm{\nabla \xfield}{L^{p_0}(\Omega)} \leq C.
    \end{equation*}
    Quite surprisingly, the special structure of the elliptic problem \eqref{pde xi} implies that the previous estimate holds with $p_0=\infty$ (see Proposition \ref{prop:linftyxi}), which will play a fundamental role throughout the article.
    \end{itemize}

    A crucial result concerning this special configuration is the following energy splitting.
    \begin{prop}\label{prop-splitting} Given any configuration $(\mathbf{u,A}) \in H^1(\Omega,\C)  \times H^1(\Omega,\R^2)$, letting $(u,A)$ be defined through the relation $(\mathbf{u,A}) = \left(\rho_\ep u, A + h_{\ex}A_\ep^0\right)$, we have
    \begin{equation}\label{Split}
     GL_\varepsilon \left(\mathbf{u,A} \right) =  GL_\ep\left(\rho_\ep,h_\ex A_\ep^0\right) + F_{\varepsilon,\pmin}(u,A) - h_{\ex} \int_\Omega \mu(u,A) \xfield + R_0,
    \end{equation}
    where 
    \begin{equation}\label{defR0}
    R_0\colonequals \frac{h_\ex^2}2\int_\Omega \frac{|\nabla \xi_\ep|^2}{\rho_\ep^2}\left(|u|^2-1\right).
    \end{equation}
    \end{prop}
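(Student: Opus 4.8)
The plan is to substitute $\u = \pmin u$ and $\A = A + h_{\ex}A_\ep^0$ directly into the three densities of $GL_\ep(\u,\A)$, expand each, and regroup until the right-hand side of \eqref{Split} appears. Two elementary consequences of the definitions will organize the whole computation. From \eqref{Meissneraprox} we have $\pmin^2\,h_{\ex}A_\ep^0 = -h_{\ex}\nabla^\perp\xfield$, and from \eqref{pde xi} we obtain
\[
\curl A_\ep^0 = -\diver\Bigl(\frac{\nabla\xfield}{\pmin^2}\Bigr) = 1 - \xfield .
\]

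First I would expand the covariant Dirichlet density. Since
\[
\nabla_\A\u = (\nabla\pmin)\,u + \pmin\,\nabla_A u - i\,\pmin\,h_{\ex}A_\ep^0\,u ,
\]
taking the squared modulus and evaluating the three mixed products by their real parts, the $\nabla\pmin$--$A_\ep^0$ product vanishes, the $\nabla\pmin$--$\nabla_A u$ product yields $\pmin\nabla\pmin\cdot\nabla|u|^2$, and the $\nabla_A u$--$A_\ep^0$ product, after using $\pmin^2 h_{\ex}A_\ep^0 = -h_{\ex}\nabla^\perp\xfield$, yields the key current term $2h_{\ex}\nabla^\perp\xfield\cdot\ip{iu}{\nabla_A u}$; the diagonal terms give $|\nabla\pmin|^2|u|^2 + \pmin^2|\nabla_A u|^2 + h_{\ex}^2\frac{|\nabla\xfield|^2}{\pmin^2}|u|^2$. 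Writing $a_\ep - \pmin^2|u|^2 = (a_\ep-\pmin^2) + \pmin^2(1-|u|^2)$ and squaring, the potential density splits into the self-energy $(a_\ep-\pmin^2)^2/(2\ep^2)$, the weighted potential $\pmin^4(1-|u|^2)^2/(2\ep^2)$, and the cross term $(a_\ep-\pmin^2)\pmin^2(1-|u|^2)/\ep^2$. Finally, since $\curl A_\ep^0 = 1-\xfield$, the magnetic density becomes $|\curl A - h_{\ex}\xfield|^2 = |\curl A|^2 - 2h_{\ex}\xfield\,\curl A + h_{\ex}^2\xfield^2$.

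Regrouping, the densities $|\nabla\pmin|^2$, $h_{\ex}^2|\nabla\xfield|^2/\pmin^2$, the self-energy, and $h_{\ex}^2\xfield^2$ assemble into $GL_\ep(\pmin,h_{\ex}A_\ep^0)$, while $\pmin^2|\nabla_A u|^2$, the weighted potential, and $|\curl A|^2$ assemble into $\fen(u,A)$ from \eqref{free-energy-weight}. The vorticity term is reconstructed from the two surviving cross terms: the magnetic cross term contributes $-h_{\ex}\int_\Omega\xfield\,\curl A$, matching the induced-field part $h=\curl A$ of $\mu(u,A)$, while the current cross term contributes $h_{\ex}\int_\Omega\nabla^\perp\xfield\cdot\ip{iu}{\nabla_A u}$, which integrates by parts into $-h_{\ex}\int_\Omega\xfield\,\curl\ip{iu}{\nabla_A u}$, the boundary term vanishing because $\xfield = 0$ on $\partial\Omega$; together these produce exactly $-h_{\ex}\int_\Omega\mu(u,A)\xfield$. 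The remainder $R_0$ from \eqref{defR0} is precisely what is needed to reconcile the factor $|u|^2$ multiplying the $\xfield$-gradient density produced by the expansion with the factor $1$ carried by the same density inside $GL_\ep(\pmin,h_{\ex}A_\ep^0)$, since $\frac{|\nabla\xfield|^2}{\pmin^2}|u|^2 = \frac{|\nabla\xfield|^2}{\pmin^2} + \frac{|\nabla\xfield|^2}{\pmin^2}(|u|^2-1)$.

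What then remains is the purely real identity
\[
\int_\Omega |\nabla\pmin|^2(|u|^2-1) + \pmin\nabla\pmin\cdot\nabla|u|^2 + \frac{(a_\ep-\pmin^2)\pmin^2}{\ep^2}(1-|u|^2) = 0 ,
\]
which is the pinned analogue of the Lassoued--Mironescu cancellation. I would establish it by invoking \eqref{pde rho} to replace $(a_\ep-\pmin^2)\pmin/\ep^2$ by $-\Delta\pmin$, then integrating $\int_\Omega\pmin\nabla\pmin\cdot\nabla|u|^2$ by parts; the boundary term drops out by the Neumann condition $\partial_\nu\pmin = 0$, and $\diver(\pmin\nabla\pmin) = |\nabla\pmin|^2 + \pmin\Delta\pmin$ forces the three integrands to cancel against $(|u|^2-1)$ pointwise. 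No single step is analytically deep; the main difficulty is bookkeeping --- tracking the signs from the two integrations by parts and, in particular, checking that the current cross term from the gradient expansion carries the same sign as the one produced by $\mu(u,A)$. The boundary conditions $\xfield = 0$ on $\partial\Omega$ and $\partial_\nu\pmin = 0$ are exactly what render both integrations by parts clean, while the regularity of $\pmin$ and $\xfield$ recorded above guarantees that every integral converges and every manipulation is justified.
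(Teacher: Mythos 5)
Your proof is correct: I checked the expansion of $|\nabla_{\A}\u|^2$ (including the vanishing of the $\nabla\pmin$--$A_\ep^0$ cross term and the sign of the current term), the identity $\curl A_\ep^0=1-\xfield$, the reconstruction of $-h_{\ex}\int_\Omega\mu(u,A)\xfield$ from the two surviving cross terms via integration by parts with $\xfield=0$ on $\partial\Omega$, the extraction of $R_0$, and the final cancellation identity, whose derivation from \eqref{pde rho}, the Neumann condition, and $\diver(\pmin\nabla\pmin)=|\nabla\pmin|^2+\pmin\Delta\pmin$ is sound. The route is the same in spirit but organized differently from the paper's: there, the magnetic Lassoued--Mironescu decoupling \eqref{LM eq} (Lemma \ref{LM decoupling}) is applied first, which removes at one stroke all terms involving $\nabla\pmin$ together with the inhomogeneous potential, and only afterwards does the expansion in $h_{\ex}A_\ep^0$ and $h_{\ex}$ take place; you instead expand all three densities simultaneously and are left at the end with the ``purely real identity'', which is precisely the content of the Lassoued--Mironescu cancellation, proved by the standard argument. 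The paper's factorization buys reusability --- \eqref{LM eq} is invoked again in \eqref{meissner energy}, in the Coulomb-gauge estimates, and in the proof of Theorem \ref{uniqueness-above-fcf} --- while your version buys self-containedness, since the splitting is obtained in one pass without appealing to the lemma (whose proof in the paper in turn rests on \eqref{lm decoupling}, quoted from \cite{lassoued-mironescu}). The only step you leave implicit is the assembly of $GL_\ep(\pmin,h_{\ex}A_\ep^0)$ out of the densities $|\nabla\pmin|^2$, $(a_\ep-\pmin^2)^2/(2\ep^2)$, $h_{\ex}^2|\nabla\xfield|^2/\pmin^2$, and $h_{\ex}^2\xfield^2$; this is the special case $u\equiv 1$, $A\equiv 0$ of your own expansion (it is \eqref{meissner energy} in the paper) and should be recorded explicitly in a written-up version.
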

    Let us remark that $R_0=R_0(\ep)$ is a term that is negligible in the regime of $h_{\ex}$ that we are interested in. 
    The first term in the RHS of \eqref{Split} captures, with high precision, the minimal energy among configurations that do not have vortices. More precisely, we have that
    $$
    GL_\ep\left(\rho_\ep,h_\ex A_\ep^0\right)=E_\ep(\rho_\ep)+h_\ex^2 J_\ep(A_\ep^0),\quad \mbox{where }J_\ep(A_\ep^0)=\frac12\int_\Omega \rho_\ep^2|A_\ep^0|^2+|\curl A_\ep^0-1|^2.
    $$
    Notice that $E_\ep(\rho_\ep)$ corresponds to the cost ``enforced'' by the potential term in \eqref{GLenergy}, which is captured by the fact that $\rho_\ep^2\approx a_\ep$, while the later term $h_\ex^2 J_\ep(A_\ep^0)$ corresponds to the energy cost produced by the presence of the external field. 

    Let us also observe that, since $u_\ep^0=\rho_\ep\geq \sqrt{b}>0$ (see Proposition \ref{proprho_ep}) $\mathbf u$ and $u$ have the same vortices and $\mu(\mathbf u,\mathbf A)\approx \mu(u,A)$. For this reason, the second term in the RHS of \eqref{Split} can be thought of as the energetic cost of the vortices, while the third term is the magnetic gain due to the vortices. Hence, the occurrence of vortices strongly depends on the sign of $F_{\varepsilon,\pmin}(u,A) - h_{\ex} \int_\Omega \mu(u,A) \xfield$. We refer the reader to Section \ref{heuristicHc1} for a more detailed (heuristic) discussion.

    It is worth mentioning that this splitting was strongly prompted by \cites{BetRiv,lassoued-mironescu,libro-ss,kachmar}.

    \medskip
    We are now ready to state our first result, which concerns an estimate of the main order of the first critical field $H_{c_1}$. Recall that the first critical field is (rigorously) defined by the fact that below this value, global minimizers of \eqref{GLenergy} do not have vortices, while they do for applied fields whose strength is greater than $H_{c_1}$. Letting
    \begin{equation}\label{mainHc1}
    \critfield \colonequals  \frac{|\log\ep|}{2 \max_{\Omega} \psi_\varepsilon},
    \end{equation}
    where $\psi_\varepsilon = \frac{\xfield}{\pmin^2}$, we have the following result.
    \begin{theorem}\label{crit field lower bound}
    There exist $\varepsilon_0 > 0$ and $K_0 > 0$ such that, for any $\varepsilon< \varepsilon_0$ and any $h_{\ex} \leq \critfield - K_0 \log|\log \varepsilon|$, the global minimizers $(\mathbf{u,A})$ of $GL_\varepsilon$ in $H^1(\Omega,\C)\times H^1(\Omega,\R^2)$ are vortexless configurations such that, letting $(u,A)=(\rho_\ep^{-1}\u,\A-h_\ex A_\ep^0)$, as $\ep\to 0$, we have
    \begin{enumerate}[leftmargin=*,label={\normalfont (\arabic*)}]
        \item $\norm{1-|u|}{L^\infty(\Omega,\C)} = o(1)$.
        \item $\norm{\mu(u,A)}{\left(C_0^{0,1}(\Omega)\right)^*} = o(1)$.
        \item $|GL_\ep(\u,\A)-GL_\ep(\rho_\ep,h_\ex A_\ep^0)|=o(1)$.
    \end{enumerate}
    \end{theorem}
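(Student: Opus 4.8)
The plan is to combine the energy splitting of Proposition~\ref{prop-splitting} with the minimality of $(\u,\A)$ and a vortex-ball lower bound, so that the hypothesis $h_\ex\leq\critfield-K_0\log|\log\ep|$ forces the total vorticity to vanish. \emph{Step 1 (upper bound from minimality).} Testing the minimality of $(\u,\A)$ against the vortexless competitor $(\pmin,h_\ex A_\ep^0)$, which corresponds to $u\equiv 1$ and $A\equiv 0$, and using \eqref{Split}, I get
\begin{equation*}
\fen(u,A)-h_\ex\int_\Omega\mu(u,A)\,\xfield+R_0\leq 0.
\end{equation*}
I would first verify that $R_0$ is negligible: since $\nabla\xfield$ is bounded in $L^\infty$ (Proposition~\ref{prop:linftyxi}) and $\pmin\geq\sqrt b$, Cauchy--Schwarz and the potential term in $\fen$ give $|R_0|\leq C h_\ex^2\int_\Omega|1-|u|^2|\leq Ch_\ex^2\ep\sqrt{\fen(u,A)}$. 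Estimating $\int_\Omega\mu(u,A)\xfield$ by $C\sqrt{\fen(u,A)}$ (integrating by parts with $\xfield=0$ on $\partial\Omega$, using the $L^\infty$ bound on $\nabla\xfield$ and $\|\curl A\|_{L^2}^2\leq 2\fen$) turns the displayed inequality into $\fen(u,A)\leq C h_\ex\sqrt{\fen(u,A)}$, whence the a priori bound $\fen(u,A)=O(|\log\ep|^2)$ and $|R_0|=o(1)$.

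\emph{Step 2 (vortex balls and the combined lower bound).} With this polynomial bound I would run the Jerrard--Sandier vortex-ball construction adapted to the weight $\pmin^2$, which lies in $[b,1]$, producing disjoint balls $B_i=B(a_i,r_i)$ with small total radius and degrees $d_i$. Combining the resulting weighted lower bound for $\fen$ with the estimate of $h_\ex\int_\Omega\mu(u,A)\xfield$—whose error stays $o(1)$ because $\nabla\xfield$ is bounded in $L^\infty$ and the balls are small—and using $\xfield\geq 0$ to replace $d_i\xfield(a_i)$ by $|d_i|\xfield(a_i)$ before factoring out $\pmin^2(a_i)$, I expect to reach
\begin{equation*}
\fen(u,A)-h_\ex\int_\Omega\mu(u,A)\,\xfield\geq\pi\sum_i|d_i|\,\pmin^2(a_i)\Bigl(|\log\ep|-C\log|\log\ep|-2h_\ex\,\psi_\ep(a_i)\Bigr)-o(1),
\end{equation*}
where $\psi_\ep=\xfield/\pmin^2$.

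\emph{Step 3 (no vortices).} Since $\psi_\ep(a_i)\leq\max_\Omega\psi_\ep$ and $h_\ex\leq\critfield-K_0\log|\log\ep|=\tfrac{|\log\ep|}{2\max_\Omega\psi_\ep}-K_0\log|\log\ep|$, the parenthesis in Step~2 is at least $\bigl(2K_0\max_\Omega\psi_\ep-C\bigr)\log|\log\ep|$. Because $\max_\Omega\psi_\ep$ stays bounded away from $0$ and $\infty$ (as $b\leq\pmin^2\leq 1$ and $0\leq\xfield\leq1$ with $\max_\Omega\xfield$ bounded below), taking $K_0$ large makes it positive. Hence if some $d_i\neq 0$ the left-hand side would be $\gtrsim\log|\log\ep|\to+\infty$, contradicting the bound $\fen(u,A)-h_\ex\int_\Omega\mu(u,A)\xfield\leq-R_0=o(1)$ from Step~1. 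Therefore all degrees vanish and $(\u,\A)$ is vortexless.

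\emph{Step 4 (the three conclusions and the main difficulty).} Once the vorticity is trivial, $\int_\Omega\mu(u,A)\xfield=o(1)$, so the inequality of Step~1 sandwiches the splitting remainder between $-o(1)$ and $0$, giving (3) and in particular $\fen(u,A)=o(1)$. The smallness of the potential term $\int_\Omega\pmin^4(1-|u|^2)^2/(2\ep^2)$ then yields (1) via the standard elliptic and maximum-principle bootstrap for $|u|=|\u|/\pmin$, while the vanishing degrees together with $\fen(u,A)=o(1)$ give the weak convergence $\mu(u,A)\to 0$ in $\left(C_0^{0,1}(\Omega)\right)^*$ of (2). I expect the main obstacle to be Step~2: carrying out the ball construction with the \emph{variable} weight $\pmin^2$ so that the lower bound genuinely sees the local value $\pmin^2(a_i)$, and hence $\psi_\ep(a_i)$, and controlling the resulting $\log|\log\ep|$ corrections finely enough that they are beaten by the $K_0\log|\log\ep|$ margin in the hypothesis.
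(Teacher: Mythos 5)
Your proposal is correct and follows essentially the same route as the paper's proof: minimality against $(\pmin,h_\ex A_\ep^0)$ plus the splitting \eqref{Split} to get $\fen(u,A)=O(|\log\ep|^2)$ and $R_0=o(1)$, then the weighted ball construction of Proposition~\ref{ball lower bound} combined with the vorticity estimate of Proposition~\ref{prop:vorticityestimate} and $\liminf_{\ep\to 0}\max_\Omega\psi_\ep>0$ (Proposition~\ref{prop:liminf}) to choose $K_0$ and force all degrees to vanish, and finally the Coulomb-gauge gradient estimates plus clearing out (Proposition~\ref{clearing out}) for items (1)--(3). The technical point you flag as the main obstacle is resolved exactly as you anticipate: the weighted ball bound carries $\min_{B_i}\pmin^2=\pmin^2(\underline{a_i})$, and the uniform Lipschitz bound \eqref{xi lipschitz estimate} on $\xfield$ transfers $\xfield(a_i)$ to $\xfield(\underline{a_i})$ with total error $O(h_\ex\sum_i r_i)=o(1)$ once the total radius is taken to be $|\log\ep|^{-\beta}$ with $\beta>3$.
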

    This result characterizes the behavior of global minimizers below $H_{c_1}^\ep$. The next result provides a characterization above this value. 
    \begin{theorem}\label{crit field upper bound}
    Assume $[\rho_\ep^2]_{C^{0,\alpha}(\Omega)} \leq |\log \ep|^m$ for some $m>0$ and $\alpha \in (0,1]$, where $[\ \cdot\ ]_{C^{0,\alpha}(\Omega)}$ denotes the Hölder seminorm. Then, there exist $\varepsilon_0 > 0$ and $K^0 > 0$ such that, for any $\varepsilon< \varepsilon_0$ and any $h_\ex$ such that $\critfield + K^0 \log|\log \varepsilon|\leq h_\ex\leq |\log \ep|^N$ for some $N\geq 2$, the global minimizers $(\mathbf{u,A})$ of $GL_\varepsilon$ in $H^1(\Omega,\C)\times H^1(\Omega,\R^2)$ do have vortices.
    \end{theorem}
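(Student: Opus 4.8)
The plan is to argue by contradiction through the energy splitting of Proposition~\ref{prop-splitting}. Writing $m_\ep \colonequals \min GL_\ep$ and, for the global minimizer $(\mathbf{u,A})$, $(u,A)=(\pmin^{-1}\mathbf u,\mathbf A-h_\ex A_\ep^0)$, the identity \eqref{Split} reduces everything to the sign and size of $\fen(u,A)-h_\ex\int_\Omega\mu(u,A)\xfield$. I will produce a competitor carrying a single vortex whose energy lies a definite amount below the Meissner energy $GL_\ep(\pmin,h_\ex A_\ep^0)$, and then show that a vortexless configuration can never be that cheap; since the minimizer realizes $m_\ep$, it must therefore carry vortices.

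First I would build the upper bound. Let $p_\ep\in\Omega$ be a maximum point of $\psi_\ep=\xfield/\pmin^2$, so that $\critfield=|\log\ep|/(2\psi_\ep(p_\ep))$, and note the chain $\xfield(p_\ep)=\pmin^2(p_\ep)\psi_\ep(p_\ep)\ge b\,\max_\Omega\psi_\ep\ge b\,\max_\Omega\xfield\ge bc_0>0$, the last bound holding uniformly by a comparison argument for the screened problem \eqref{pde xi}. I then take, in the split variables, $A\equiv0$ and $u$ equal to the standard degree-one vortex profile of core scale $\ep$ centered at $p_\ep$, interpolated to the constant $1$ outside a ball $B(p_\ep,r_0)$ with $r_0=|\log\ep|^{-s}$ for a large fixed exponent $s$. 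For this competitor $\mu(u,A)\approx 2\pi\delta_{p_\ep}$, and the leading cost is $\tfrac12\int_\Omega\pmin^2|\nabla u|^2\approx \pi\pmin^2(p_\ep)|\log\ep|$; here the Hölder hypothesis $[\pmin^2]_{C^{0,\alpha}(\Omega)}\le|\log\ep|^m$ is exactly what lets me replace $\pmin^2(x)$ by the constant $\pmin^2(p_\ep)$ over the vortex region, the resulting error being bounded by $\tfrac{\pi}{\alpha}[\pmin^2]_{C^{0,\alpha}}r_0^\alpha=o(\log|\log\ep|)$ once $s>m/\alpha$ (while shrinking $r_0$ only lowers the main cost). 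Using Proposition~\ref{prop:linftyxi} one gets $h_\ex\int_\Omega\mu\,\xfield=2\pi h_\ex\xfield(p_\ep)+O(h_\ex\ep)$ and $R_0=O(h_\ex^2\ep^2)=o(1)$, both negligible because $h_\ex\le|\log\ep|^N$. Feeding these into \eqref{Split} yields
\begin{equation*}
m_\ep\le GL_\ep(\pmin,h_\ex A_\ep^0)+2\pi\,\xfield(p_\ep)\bigl(\critfield-h_\ex\bigr)+o(\log|\log\ep|),
\end{equation*}
so for $h_\ex\ge\critfield+K^0\log|\log\ep|$ and $K^0$ large, $m_\ep\le GL_\ep(\pmin,h_\ex A_\ep^0)-c\log|\log\ep|$ for some $c>0$.

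It remains to show that a vortexless minimizer cannot be this cheap, which produces the contradiction. Inserting $(u,A)$ back into \eqref{Split} and using the upper bound gives $\fen(u,A)-h_\ex\int_\Omega\mu\,\xfield+R_0\le-c\log|\log\ep|$; since $\fen\ge0$ and $R_0=o(1)$ this already forces $\int_\Omega\mu(u,A)\xfield>0$, so $\mu$ carries positive mass. To upgrade this to genuine vortices I would invoke the weighted vortex-ball lower bounds underlying Theorem~\ref{crit field lower bound}: were the configuration vortexless, then $\||u|-1\|_{L^\infty}=o(1)$ with a globally defined phase $\varphi$, so that $\mu(u,A)=\nabla(|u|^2)\wedge\nabla\varphi+\curl\bigl((1-|u|^2)A\bigr)$ is small and one obtains $\fen(u,A)-h_\ex\int_\Omega\mu\,\xfield\ge-o(\log|\log\ep|)$, contradicting the displayed bound once $K^0$ is chosen large. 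Hence the minimizer must contain vortices. The main difficulty is the upper bound: securing the sharp leading constant $\pi\pmin^2(p_\ep)|\log\ep|$ for the vortex cost under the sole regularity afforded by the Hölder hypothesis, while keeping every magnetic and remainder error below the $\log|\log\ep|$ margin in the presence of an applied field as strong as $|\log\ep|^N$. The choice of the intermediate radius $r_0$ and the precise bookkeeping of these errors is exactly where the hypotheses on $[\pmin^2]_{C^{0,\alpha}}$ and on the range of $h_\ex$ are consumed.
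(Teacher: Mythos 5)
Your overall route is the same as the paper's: a single degree-one vortex competitor centered at a maximum point $p_\ep$ of $\psi_\ep$, energy computed through the splitting \eqref{Split}, followed by the observation that a vortexless configuration cannot lie more than $o(1)$ below the Meissner energy (your second part is essentially the paper's concluding step, with the same decomposition $\ip{iu_0}{\nabla_{A_0} u_0}+A_0=(1-|u_0|^2)(\nabla\varphi_0-A_0)+\nabla\varphi_0$ and Cauchy--Schwarz, using $h_\ex\le|\log\ep|^N$ to make these terms $o(1)\fen(u_0,A_0)$). Your treatment of the Hölder error inside the core, integrating $[\pmin^2]_{C^{0,\alpha}}|x-p_\ep|^{\alpha-2}$ to get $\tfrac{\pi}{\alpha}[\pmin^2]_{C^{0,\alpha}}r_0^\alpha$, is in fact slightly sharper than the paper's uniform bound (it needs only $s>m/\alpha$ rather than $s>(1+m)/\alpha$).

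However, there is a genuine error in your upper bound. First, as literally described your competitor is inconsistent: if $u$ equals the constant $1$ outside $B(p_\ep,r_0)$, then its degree on $\partial B(p_\ep,r_0)$ is zero, so it cannot carry net degree one and $\mu(u,A)\not\approx2\pi\delta_{p_\ep}$. What you must mean (and what the paper does) is $|u|=1$ outside the ball with the phase still winding once around $p_\ep$. But then the phase energy outside the core, $\tfrac12\int_{\Omega\setminus B(p_\ep,r_0)}\pmin^2|\nabla\varphi|^2$, is bounded below by $\pi b\log(d/r_0)$ and above by $\pi\log(\mathrm{diam}(\Omega)/r_0)+O(1)$, i.e.\ it is of order $s\log|\log\ep|$, where $s$ is the \emph{large} fixed exponent you already committed to ($s>m/\alpha$). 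This is not $o(\log|\log\ep|)$, so your displayed inequality
\begin{equation*}
m_\ep\le GL_\ep(\pmin,h_\ex A_\ep^0)+2\pi\,\xfield(p_\ep)\bigl(\critfield-h_\ex\bigr)+o(\log|\log\ep|)
\end{equation*}
is false as stated: combining inside and outside, the Hölder hypothesis controls the weight only within $B(p_\ep,r_0)$, while outside the weight can only be bounded by $1$, leaving a surplus term $\pi\left(1-\pmin^2(p_\ep)\right)s\log|\log\ep|$, which can be as large as $\pi(1-b)s\log|\log\ep|$. This surplus is of exactly the critical order: the margin gained from $h_\ex\ge\critfield+K^0\log|\log\ep|$ is $2\pi\xfield(p_\ep)K^0\log|\log\ep|$, so the construction only beats the Meissner energy if $K^0$ is chosen large \emph{depending on} $s$ and on the uniform lower bound $\liminf_{\ep\to0}\xfield(p_\ep)>0$ of Proposition \ref{prop:liminf} --- precisely the bookkeeping $(1-b)M-2K^0\xfield(x^0_\ep)<0$ carried out in the paper. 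Since the theorem allows $K^0$ to be an arbitrary constant, the gap is reparable, but your error accounting, which would let any fixed $K^0>0$ work, does not close as written.
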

    Thus, under the assumptions of the previous two theorems, we conclude that 
    $$
    H_{c_1}=H_{c_1}^\ep+O(\log|\log\ep|).
    $$
    This in particular generalizes the estimate on the first critical field found in \cite{kachmar}, where, as explained above, $\Omega$ is a ball and the pinning term is radial. It is also worth remarking that, without further assumptions on $a_\ep$, it is not possible to reduce the error term in the estimate. In the homogeneous case $a_\ep\equiv 1$ in $\Omega$, it is well known (see \cites{SS-Hc1,libro-ss}) that $H_{c_1}=C(\Omega)|\log \ep|+o(1)$ as $\ep\to 0$. However, in \cite{santos2019magnetic} Dos Santos showed that, under the assumption on $a_\ep$ explained above, the expansion of the first critical field contains a term of the form $C(\Omega,b)\log|\log\ep|$. 

    \begin{remark}\label{remarkmaxpsi}
    Since $\rho_\ep^2 \in [b,1]$ and $\xi_\ep\in [0,1]$, we have
    $$
    0<\max_\Omega \xi_\ep \leq \max_\Omega \psi_\ep\leq b^{-1} \max_\Omega \xi_\ep\leq b^{-1}<+\infty.
    $$
    Moreover, it holds that $\liminf_{\ep\to 0}\max_\Omega \xi_\ep>0$ (see Proposition \ref{prop:liminf}). Hence, just as in the homogeneous case, $H_{c_1}^\ep=O(|\log\ep|)$. We believe that it would be interesting to investigate whether $\max_{\Omega}\psi_\ep$ converges, as $\ep\to 0$, to some special constant depending on $\Omega$ and $b$ if one considers a model where $a_\ep$ homogenizes as $\ep \to 0$, that is, in the spirit of the one considered in \cite{aftalion2000pinning}.
    \end{remark}

    \medskip 
    Our next results go beyond the first critical field. They show that, as in the homogeneous case $a_\ep\equiv 1$ in $\Omega$, the Meissner state beyond $H_{c_1}$ continues to be a local minimizer of the energy, even for applied fields with intensity close to $O(\ep^{-1})$. We begin by presenting an existence result.

    \begin{theorem}\label{existence-above-fcf}
    Let $\alpha \in \left(0,\frac{1}{2}\right)$. There exists $\varepsilon_0 > 0$ such that, for any $\varepsilon < \varepsilon_0$ and $h_{\ex} < \varepsilon^{-\alpha}$, there exists a vortexless local minimizer $\left(\mathbf{u, A}\right)$ for $GL_\varepsilon$. Moreover, letting $(u,A)=(\rho_\ep^{-1}\u,\A-h_\ex A_\ep^0)$ as $\ep\to 0$, we have
    \begin{enumerate}[leftmargin=*,label={\normalfont (\arabic*)}]
        \item $\norm{\mu(u,A)}{\left(C^{0,1}_0(\Omega)\right)^*} = o(1)$.
        \item $|GL(\u,\A)-GL(\rho_\ep,h_\ex A_\ep ^0)|=o(1)$.
        \item $\norm{1-|u|}{L^\infty(\Omega)} = o(1).$
    \end{enumerate}
    Furthermore, if $(\u, \A)$ is in the Coulomb gauge, it holds that
    \begin{enumerate}[leftmargin=*,label={\normalfont (\arabic*)},resume]
        \item The configuration $(u,A)$ satisfies
        $$\inf_{\theta \in [0,2\pi]} \norm{u - e^{i\theta}}{H^1(\Omega)} + \norm{A}{H^1(\Omega)} = o(1).$$
        \item The configuration $(\mathbf{u,A})$ satisfies
        $$\norm{\mathbf{A} - h_{\ex} A_\ep^0}{H^1(\Omega)} = o(1).$$
        Moreover, if $\norm{\nabla \pmin}{L^2(\Omega)} < \varepsilon^{-\gamma}$ for some $\gamma < 1-2\alpha$, we have for any $r \in [1,2)$ 
        $$\inf_{\theta \in [0,2\pi]} \norm{\mathbf{u} - \pmin e^{i\theta}}{W^{1,r}(\Omega)} = o(1).$$
    \end{enumerate}
\end{theorem}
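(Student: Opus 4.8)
The plan is to obtain the local minimizer as the minimizer of $GL_\ep$ over a suitable vortexless admissible class, to use the splitting \eqref{Split} to show that this minimizer sits strictly inside the class and $o(1)$-close to the reference $(\pmin, h_\ex A_\ep^0)$, and finally to promote constrained minimality to genuine $H^1$-local minimality (modulo gauge). Concretely, I would work in the Coulomb gauge and minimize $GL_\ep$ over
\[
\mathcal K := \left\{ (\u,\A) \in H^1(\Omega,\C)\times H^1(\Omega,\R^2) : \tfrac12 \le |\pmin^{-1}\u| \le \tfrac32 \ \text{a.e. in}\ \Omega \right\}.
\]
Since $\Omega\subset\R^2$ is bounded, $H^1(\Omega)\hookrightarrow L^2(\Omega)$ compactly, so a minimizing sequence converges a.e.\ along a subsequence and the pointwise bounds pass to the limit; hence $\mathcal K$ is weakly sequentially closed. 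Together with weak lower semicontinuity and coercivity in the Coulomb gauge (the constraint bounds $|\u|$ pointwise, while $\norm{\curl\A-h_\ex}{L^2}^2$ and $\A\cdot\nu=0$ control $\norm{\A}{H^1(\Omega)}$ on the simply connected $\Omega$), the direct method produces a minimizer $(\u,\A)$. Writing $(u,A)=(\pmin^{-1}\u,\A-h_\ex A_\ep^0)$, the approximate Meissner state corresponds to $(u,A)=(1,0)$, which is interior to $\mathcal K$; testing with it in \eqref{Split} (where $\mu(1,0)=0$ and $R_0=0$) gives $GL_\ep(\u,\A)\le GL_\ep(\pmin,h_\ex A_\ep^0)$, that is, $F_{\varepsilon,\pmin}(u,A) - h_\ex\int_\Omega \mu(u,A)\,\xfield + R_0 \le 0$.

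The analytic heart is to show that this left-hand side is coercive around the reference up to an $o(1)$ error, so that the inequality forces smallness. Using $A_\ep^0=-\nabla^\perp\xfield/\pmin^2$ and integrating by parts (legitimate since $\xfield=0$ on $\partial\Omega$), the cross term becomes $-h_\ex\int_\Omega \pmin^2 A_\ep^0\cdot(\langle iu,\nabla_A u\rangle + A)$; completing the square against $\tfrac12\int_\Omega\pmin^2|\nabla_A u|^2$ and observing that $R_0$ exactly cancels the $(|u|^2-1)$-weighted magnetic term (here one uses $|\nabla\xfield|^2/\pmin^2=\pmin^2|A_\ep^0|^2$), one rewrites the left-hand side as a nonnegative functional of the deviation of $(\u,\A)$ from $(\pmin,h_\ex A_\ep^0)$ plus a constant matching its value at the reference. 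The crucial and hardest step is then a coercivity (linear stability) estimate: the associated quadratic form stays positive definite, uniformly in $\varepsilon$, despite the negative contribution $-h_\ex\int_\Omega\mu\,\xfield$, precisely because $h_\ex<\varepsilon^{-\alpha}$ with $\alpha<\tfrac12$ keeps that contribution subdominant — the quantitative expression of the Meissner state being stable below the superheating field $O(\varepsilon^{-1})$. The residual of $(\pmin,h_\ex A_\ep^0)$ in the Ginzburg--Landau equations (small because $\pmin,\xfield$ solve \eqref{pde rho} and \eqref{pde xi}) then contributes only errors of size $\varepsilon^{1-2\alpha}$, controlled via the $W^{1,\infty}$ bound on $\xfield$ from Proposition \ref{prop:linftyxi} and $\int_\Omega\big||u|^2-1\big|\lesssim\varepsilon\,F_{\varepsilon,\pmin}(u,A)^{1/2}$. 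Combining coercivity with the a priori inequality yields $F_{\varepsilon,\pmin}(u,A)=o(1)$, hence $\int_\Omega\pmin^4(1-|u|^2)^2/\varepsilon^2=o(1)$.

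From here I would upgrade the $L^2$-smallness of $1-|u|^2$ to $\norm{1-|u|}{L^\infty(\Omega)}=o(1)$ by elliptic regularity and the maximum principle applied to the equation satisfied by $\u$ (the constrained minimizer is a genuine critical point against $H^1\cap L^\infty$ variations, the constraint turning out inactive), in the spirit of the bound $\sqrt b\le\pmin\le1$ for \eqref{pde rho}. This shows $(\u,\A)$ is strictly interior to $\mathcal K$ and yields conclusions (1)--(3). The remaining obstacle is that $L^\infty$-interiority does not directly give $H^1$-local minimality, since $H^1\not\hookrightarrow L^\infty$ in two dimensions; I expect this to be the main subtlety, and I would resolve it through a vortex-nucleation barrier: producing a configuration with a degree $\pm1$ zero out of one with $|u|$ uniformly near $1$ costs at least a fixed amount $c_0>0$ of energy (a core confined to scale $\varepsilon$ already costs $O(1)$), hence $H^1$-distance at least $\sqrt{c_0}$. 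Fixing a radius $r<\sqrt{c_0}$ independent of $\varepsilon$, every competitor in the ball $B_r(\u,\A)$ is vortexless, so its magnetic gain cannot compensate the cost of leaving $\mathcal K$: competitors inside $\mathcal K$ are beaten by constrained minimality, those violating only the upper bound are handled by modulus truncation (which does not increase $GL_\ep$), and those dipping below $\tfrac12$ without winding pay a definite potential/gradient cost. Hence $(\u,\A)$ is a vortexless $H^1$-local minimizer.

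Finally, in the Coulomb gauge the quantitative conclusions follow from $F_{\varepsilon,\pmin}(u,A)=o(1)$ and $\pmin\ge\sqrt b$: this gives $\norm{\curl A}{L^2}=o(1)$ and $\norm{\nabla_A u}{L^2}=o(1)$, whence the Coulomb-gauge elliptic estimate on the simply connected $\Omega$ yields $\norm{A}{H^1(\Omega)}=o(1)$ — which is conclusion (5), since $\A-h_\ex A_\ep^0=A$ — and then $\norm{\nabla u}{L^2}=o(1)$. A Poincar\'e argument shows $u$ is $H^1$-close to its mean, whose modulus tends to $1$ by the potential bound, giving $\inf_\theta\norm{u-e^{i\theta}}{H^1(\Omega)}=o(1)$, i.e.\ conclusion (4). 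For (6), writing $\nabla(\u-\pmin e^{i\theta})=(u-e^{i\theta})\nabla\pmin+\pmin\,\nabla(u-e^{i\theta})$ and estimating in $L^r$ for $r\in[1,2)$ by H\"older with $\tfrac1r=\tfrac12+\tfrac1s$, the second term is controlled by $\norm{\nabla u}{L^2}=o(1)$, while the first is bounded by $\norm{\nabla\pmin}{L^2}\,\norm{u-e^{i\theta}}{L^s}$; the hypothesis $\norm{\nabla\pmin}{L^2}<\varepsilon^{-\gamma}$ with $\gamma<1-2\alpha$ is exactly what makes this product vanish once the convergence rate $F_{\varepsilon,\pmin}(u,A)=O(\varepsilon^{1-2\alpha})$ from the second step is inserted into a Sobolev estimate for $\norm{u-e^{i\theta}}{L^s}$.
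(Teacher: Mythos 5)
Your overall strategy (minimize over a vortexless constrained class, use the splitting \eqref{Split} to show the minimizer is strictly interior and close to $(\pmin,h_\ex A_\ep^0)$, then upgrade to genuine local minimality) parallels the paper's, and your treatment of items (4)--(6) is essentially the paper's Step 2. But your choice of constraint --- the pointwise condition $\tfrac12\le|u|\le\tfrac32$ --- creates two gaps that your sketch does not close, whereas the paper's choice, the \emph{energy} constraint $\fen(u,A)<\ep^\beta$ in the Coulomb gauge, is engineered precisely to avoid them. First, your ``analytic heart'' is asserted rather than proven, and in the unconditional form you state it, it is false: rewriting $\fen(u,A)-h_\ex\int_\Omega\mu(u,A)\xfield+R_0$ as a nonnegative functional of the deviation from the reference would, by \eqref{Split}, make $(\pmin,h_\ex A_\ep^0)$ a \emph{global} minimizer of $GL_\ep$ for all $h_\ex\le\ep^{-\alpha}$, contradicting Theorem \ref{crit field upper bound}, which exhibits vortex configurations of strictly lower energy once $h_\ex\ge\critfield+K^0\log|\log\ep|$. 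Any correct version must use the constraint $|u|\ge\tfrac12$ essentially, and you never say how it enters. It can be made rigorous without any spectral ``linear stability'' statement: on your class $\mathcal K$, simple connectedness gives a global phase $u=|u|e^{i\varphi}$, so $\mu(u,A)=\curl\big((|u|^2-1)(\nabla\varphi-A)\big)$ and $h_\ex\big|\int_\Omega\mu(u,A)\xfield\big|\le Ch_\ex\,\ep\,\fen(u,A)=o\big(\fen(u,A)\big)$ --- exactly the computation in Step 5 of the paper's proof of Theorem \ref{crit field upper bound} --- but that is the argument you needed to supply in place of the asserted positive-definiteness.

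Second, and more seriously, the promotion to $H^1$-local minimality rests on a false barrier. In two dimensions one can create a zero of $u$ (indeed a vortex--antivortex dipole of total degree zero) by a perturbation of arbitrarily small $H^1$ norm: a truncated logarithmic spike $v$ concentrated at scale $\delta\ll\ep$ satisfies $\|\nabla v\|_{L^2}^2=O(1/|\log(\delta/\delta')|)$ while $\|v\|_{L^\infty}\ge1$, and its potential cost is $O(\delta^2/\ep^2)=o(1)$. Hence it is not true that every competitor in an $\ep$-independent ball $B_r(\u,\A)$ is vortexless, nor that dipping below $\tfrac12$ ``pays a definite potential/gradient cost''; in fact $\mathcal K$ has empty interior in $H^1$ (also note that truncating the modulus from \emph{below} does not decrease the energy, since it increases the term $|u|^2|\nabla\varphi-A|^2$). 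Relatedly, your derivation of the $L^\infty$ bound is circular: you invoke ``the equation satisfied by $\u$'' and the maximum principle to show the constraint is inactive, but a constrained minimizer only satisfies a variational inequality until inactivity is known, and the clearing-out step (Proposition \ref{clearing out}) requires the Lipschitz bound $\|\nabla|u|\|_{L^\infty}\le C/\ep$, which in the paper comes from the Ginzburg--Landau equations via \eqref{coulomb linfty estimate} and \eqref{coulomb grad estimate}. The paper's set $U=\{\fen(u,A)<\ep^\beta\}$ resolves all of this at once: $\fen$ is continuous on $H^1\times H^1$ in 2D, so $U$ is open and interiority immediately yields local minimality (with a neighborhood that may shrink with $\ep$, which is all that is required); interiority itself is checked directly through the ball construction (Proposition \ref{ball lower bound}), the degree bound \eqref{degree bound}, and the vorticity estimate \eqref{jacobian estimate}, after which the minimizer is an honest critical point and the regularity plus clearing-out machinery applies.
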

Let us emphasize that this result gives a precise characterization of the behavior of the local minimizer. As a matter of fact, it essentially shows that our approximation of the Meissner state is almost a solution of the Ginzburg--Landau equations. As far as we know, analogous results have only been established for the homogeneous Ginzburg--Landau functional; see \cite{sylvia-arma} (in the 2D case) and \cite{Rom-CMP} (in the 3D case).

\medskip
Our last result concerns the uniqueness, up to a gauge transformation, of locally minimizing vortexless configurations. %

\begin{theorem}\label{uniqueness-above-fcf}
    Assume $E_\ep(\pmin) \ll \frac{1}{\ep^2}$. Let $\alpha \in (0,1)$ and $\beta > 0$. There exists $\varepsilon_0 > 0$ such that for any $\varepsilon < \varepsilon_0$, if $h_{\ex} \leq \varepsilon^{-\alpha}$ then a pair $(\u,\A) = (\pmin u, A + h_\ex A^0_\ep)$ which locally minimizes $GL_\ep$ in $H^1(\Omega,\C)\times H^1(\Omega,\R^2)$ and satisfies $\fen(u,A) < \varepsilon^\beta$, is unique up to a gauge transformation.
\end{theorem}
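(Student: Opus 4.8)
The plan is to show that any two locally minimizing configurations satisfying the hypotheses coincide modulo gauge, by proving that the second variation of $GL_\ep$ at (a state close to) the approximate Meissner configuration $(\pmin,h_\ex A_\ep^0)$ is coercive transversally to the gauge orbit. First I would normalize: by gauge invariance I may place both competitors in the Coulomb gauge \eqref{coulomb gauge} and fix the residual constant phase. Writing each as $(\u_j,\A_j)=(\pmin u_j,A_j+h_\ex A_\ep^0)$, $j=1,2$, the hypothesis $\fen(u_j,A_j)<\ep^\beta$ feeds into the a priori analysis underlying Theorem \ref{existence-above-fcf} (a clearing-out argument together with the $L^\infty$ bound on $\psi_\ep$ from Proposition \ref{prop:linftyxi}), yielding $\norm{1-|u_j|}{L^\infty(\Omega)}=o(1)$, $\inf_\theta\norm{u_j-e^{i\theta}}{H^1(\Omega)}+\norm{A_j}{H^1(\Omega)}=o(1)$ and vanishing vorticity. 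In particular $|u_j|\geq\tfrac12$, so on the simply connected domain $\Omega$ each $u_j$ admits a single-valued lift $u_j=|u_j|e^{i\varphi_j}$. Consequently all admissible competitors lie in a single small neighborhood of the Meissner state, and it suffices to show that $GL_\ep$ has at most one critical point there, modulo gauge.

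I would then run a linearized uniqueness argument. Both $(\u_j,\A_j)$ are critical points, hence solve the Ginzburg--Landau system; setting $(w,B)=(u_1-u_2,A_1-A_2)$ and subtracting, testing the resulting identity against $(w,B)$ produces the second-variation quadratic form $Q(w,B)$. Its positive part $Q_+$ consists of the \emph{potential} term $\tfrac1{\ep^2}\norm{\pmin^2\langle u,w\rangle}{L^2(\Omega)}^2$, the \emph{kinetic} term $\norm{\pmin(\nabla_Aw-iBu)}{L^2(\Omega)}^2$ and the \emph{magnetic} term $\norm{\curl B}{L^2(\Omega)}^2$, to which one adds the indefinite second variation of the magnetic coupling $-h_\ex\int_\Omega\mu\,\xfield$ and a remainder $\mathcal R$ that is cubic and higher in $(w,B)$. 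Conceptually, $Q$ is a perturbation of the strictly convex London-type form obtained when $|u|\equiv1$: the potential controls the amplitude perturbation $\langle u,w\rangle$ with strength $\sim\ep^{-2}$, while the magnetic and (after gauge fixing) the kinetic terms control the field and phase perturbations.

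The heart of the proof, and the step I expect to be the main obstacle, is the uniform coercivity $Q(w,B)\gtrsim Q_+$ modulo the gauge direction. The structural key is the identity $\mu(u,A)=\nabla^\perp(|u|^2)\cdot\nabla\varphi-\curl\bigl((|u|^2-1)A\bigr)$, valid for $u=|u|e^{i\varphi}$, which shows that $\mu$ and its second variation are carried entirely by deviations of the amplitude $|u|$ from $1$. Writing the amplitude perturbation $s=\langle u,w\rangle/|u|$ and the phase perturbation $\psi$, the second variation of the coupling reduces, after integrating by parts against $\nabla^\perp\xfield$ (the boundary term vanishing since $\xfield=0$ on $\partial\Omega$) and after the second-order phase derivatives cancel, to expressions of the type $h_\ex\int_\Omega s\,\nabla^\perp\xfield\cdot\nabla\psi$ and $h_\ex\int_\Omega s\,B\cdot\nabla^\perp\xfield$. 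Using $\norm{\nabla\xfield}{L^\infty(\Omega)}\leq C$ from Proposition \ref{prop:linftyxi}, the bound $\norm{s}{L^2(\Omega)}\leq C\ep\,Q_+^{1/2}$ furnished by the $\ep^{-2}$ potential, and $\norm{\nabla\psi}{L^2(\Omega)}+\norm{B}{L^2(\Omega)}\leq CQ_+^{1/2}$ furnished by the kinetic and magnetic terms, Young's inequality bounds the coupling by $C\ep h_\ex\,Q_+=C\ep^{1-\alpha}Q_+=o(1)Q_+$, which is dominated precisely because $\alpha<1$; this is where the restriction $h_\ex\leq\ep^{-\alpha}$ enters decisively. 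The hypothesis $E_\ep(\pmin)\ll\ep^{-2}$, equivalently $\norm{\nabla\pmin}{L^2(\Omega)}\ll\ep^{-1}$, is what lets me absorb the analogous terms generated by the non-constant weight $\pmin$ and the background current.

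With coercivity in hand I would close the argument. The remainder $\mathcal R$ is controlled using the a priori $L^\infty$-closeness: its dangerous $\ep^{-2}$-weighted cubic contributions from the potential are of the form $\ep^{-2}\norm{w}{L^\infty(\Omega)}\norm{w}{L^2(\Omega)}^2$, hence subordinate to the $\ep^{-2}$-weighted lower bound precisely because $\norm{w}{L^\infty(\Omega)}=o(1)$, while the remaining contributions are controlled by the $H^1$-smallness of the two states. Comparing $Q(w,B)=\langle\mathcal R,(w,B)\rangle$ with the coercivity estimate forces $(w,B)=0$, so that $u_1=u_2$ and $A_1=A_2$ in the Coulomb gauge, which is the asserted uniqueness up to a gauge transformation.
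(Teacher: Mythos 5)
Your overall architecture (Coulomb gauge fixing, clearing out, an $\ep^{-2}$-weighted coercive term for the amplitude, kinetic/magnetic control of phase and field, and absorption of the coupling $-h_\ex\int_\Omega\mu\,\xfield$ via $\|\nabla\xfield\|_{L^\infty(\Omega)}\leq C$ and $h_\ex\leq\ep^{-\alpha}$, $\alpha<1$) is in the right spirit, but your remainder estimate has a genuine gap, located exactly at the step you flag as the main obstacle. You keep the phase as an independent variable, and your control of the cubic remainder requires $\|w\|_{L^\infty(\Omega)}=o(1)$ for $w=u_1-u_2$. The a priori information available under the hypotheses gives only $\|1-|u_j|\|_{L^\infty(\Omega)}=o(1)$ (amplitudes) and $H^1$-smallness of $u_j-e^{i\theta_j}$; the \emph{phase difference} is small in $H^1$ but not in $L^\infty$ ($H^1\not\hookrightarrow L^\infty$ in 2D), and interpolating with the gradient bound $\|\nabla u_j\|_{L^\infty(\Omega)}\leq C\ep^{-1}$ from \eqref{coulomb grad estimate} only gives $\|w\|_{L^\infty(\Omega)}\lesssim\ep^{(\beta-2)/4}$, which is $o(1)$ only when $\beta>2$. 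The same obstruction appears quantitatively in your scheme: with $Q_+$ your coercive quantity, the dangerous terms (e.g.\ $\ep^{-2}\int_\Omega|\langle u,w\rangle|\,|w|^2$, and also the $\ep^{-2}$-weighted amplitude--phase couplings such as $\ep^{-2}\int_\Omega(1-\eta_1^2)\eta_1\eta_2\bigl(1-\cos(\varphi_1-\varphi_2)\bigr)$ that arise because the potential is \emph{not} a function of the amplitudes alone once phases are kept) are bounded by $C\ep^{-1}Q_+^{3/2}$ and $C\ep^{\beta/2-1}Q_+$ respectively, using $\|\langle u,w\rangle\|_{L^2(\Omega)}\leq C\ep Q_+^{1/2}$ and $\|w\|_{L^4(\Omega)}^2\leq C\|w\|_{H^1(\Omega)}^2\leq CQ_+$. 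The resulting identity then yields only the dichotomy $Q_+=0$ or $Q_+\gtrsim\ep^2$, and the a priori bound $Q_+\lesssim\ep^\beta$ rules out the second alternative only if $\beta>2$. Since the theorem asserts uniqueness for \emph{every} $\beta>0$ (and is strongest for small $\beta$, when the class is largest), and since the local minimizer of Theorem \ref{existence-above-fcf} satisfies the hypothesis only with some $\beta<2-4\alpha<2$, your argument as written misses precisely the relevant range.

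The paper's proof avoids this issue structurally. Since each competitor is vortexless, it writes $u_j=\eta_je^{i\phi_j}$ and gauge-transforms the phase \emph{into the vector potential}, setting $A_j^\circ=A_j+h_\ex A_\ep^0-\nabla\phi_j$, so the two configurations become $(\pmin\eta_j,A_j^\circ)$ with real amplitudes $\eta_j\geq\frac34$; the potential term then depends on the amplitudes alone, with no phase coupling at all. It then shows strict convexity of $GL_\ep$ along the straight segment joining the two configurations: the gradient and field terms are convex; the potential term gives $Y_2\geq C\ep^{-2}\|\eta_1-\eta_2\|_{L^2(\Omega)}^2$ by exact algebra in the convexity range; and the cross terms $Y_3$ are absorbed by Young's inequality using only the \emph{weak} pointwise bound $\|A_j^\circ\|_{L^\infty(\Omega)}=o(\ep^{-1})$, obtained from \eqref{coulomb linfty estimate} (this is where $E_\ep(\pmin)\ll\ep^{-2}$ and $h_\ex\leq\ep^{-\alpha}$ enter) together with an elliptic estimate for $\phi_j$ interpolated at exponents $q\in(2,2+\beta)$ — a choice available for every $\beta>0$. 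Strict convexity then contradicts local minimality of the configuration with larger energy. The lesson is that the segment-convexity argument needs only $o(\ep^{-1})$ pointwise information, never $o(1)$ closeness of the two states; if you want to salvage your linearization, you should likewise eliminate the phase difference by a gauge transformation before estimating, rather than demanding $L^\infty$ control of $u_1-u_2$.
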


\begin{remark}
The hypotheses of this theorem are verified by the vortexless local minimizer found in Theorem \ref{existence-above-fcf}. More precisely, given $\alpha\in \left(0,\frac12\right)$, the vortexless local minimizer $(\u,\A)$ given by Theorem \ref{existence-above-fcf} is such that $\fen(u,A) < \varepsilon^\beta$, for some constant $\beta>0$. 
\end{remark}

Hence, in summary, we see that for $h_\ex \leq H_{c_1}^\ep-K\log|\log \ep|$, the unique global minimizer of \eqref{GLenergy} (up to a gauge transformation) is a vortexless configuration that looks very similar to \eqref{Meissneraprox}. Beyond this value, at least up to $h_\ex=o(\ep^{-\frac12})$, \eqref{GLenergy} admits a unique vortexless local minimizer with the same behavior. Therefore, exactly as observed in the homogeneous Ginzburg--Landau functional, since this branch of vortexless solutions remains stable, in the process of continuously rising $h_\ex$, vortices should appear at a critical value of $h_\ex$ called the \emph{superheating field} $H_{sh}$ instead of when getting to the first critical field $H_{c_1}=O(|\log\ep|)$. When reaching $H_{sh}$, the Meissner configuration becomes unstable, allowing for the occurrence of vortices. In the homogeneous case $H_{sh}=O(\ep^{-1})$; see, for instance, the classical works \cites{superheating2,superheating}.

\medskip
Let us finally mention that two classical tools in the analysis of Ginzburg--Landau type energies play a crucial role in this paper: the vortex ball construction and the vorticity estimate. In this paper, we obtain a new version of the former that might be of independent interest, which generalizes the vortex ball construction method provided in \cite{mass-displacement} to the case of the weighted Ginzburg--Landau free energy \eqref{free-energy-weight}; see Proposition \ref{ball lower bound}. 

\subsection*{Outline of the paper}
The rest of the paper is organized as follows. In Section \ref{splitting section}, we provide some preliminary results and a proof of the energy splitting \eqref{Split}. In Section \ref{first cf}, we heuristically derive $\critfield$ and prove Theorem \ref{crit field lower bound} and Theorem \ref{crit field upper bound}. In Section \ref{above fcf}, we prove Theorem \ref{existence-above-fcf} and Theorem \ref{uniqueness-above-fcf}. Finally, in Appendix \ref{ball construction chapter}, we provide the new version of the vortex ball construction method for a weighted Ginzburg--Landau energy.

\noindent
\\
{\bf Acknowledgments.} This work was partially funded by ANID FONDECYT 1231593.

\section{Preliminaries and Energy Splitting}\label{splitting section}
    \subsection{The (weighted) Ginzburg--Landau equations.}
    The Euler--Lagrange equations associated to $GL_\varepsilon$ are
    \begin{equation}\label{euler lagrange gl}
        \left\{
        \begin{array}{rcll}
        -(\nabla_A)^2u &=& \dfrac{u(a_\varepsilon - |u|^2)}{\varepsilon^2}&\mbox{in }\Omega \\
        -\nabla^\perp h &=& \ip{iu}{\nabla_A u} &\mbox{in }\Omega\\
            h&=&h_{\ex}&\mbox{on }\partial \Omega\\
            \nabla_A u \cdot \nu &=& 0 &\mbox{on }\partial \Omega,
        \end{array}
        \right.
    \end{equation}
    where $(\nabla_A)^2 = (\operatorname{div}-iA)(\nabla_A)$ and $\nu$ is the unit normal vector pointing outward from $\Omega$.

    Observe that the maximum principle implies that any solution to \eqref{euler lagrange gl} satisfies 
    \begin{equation}\label{maxprinc|u|}
    |u|^2 \leq \max_{\Omega} a_{\varepsilon} \leq 1
    \end{equation}
    This can be proved following exactly the same argument used in the case of the classical homogeneous Ginzburg--Landau energy (see for instance \cite{libro-ss}*{Proposition 3.8}). %
    
    The Ginzburg--Landau equations \eqref{euler lagrange gl} are invariant under gauge transformations. Therefore, any solution of \eqref{euler lagrange gl} can be gauge-transformed into a solution $(u,A)$ in the Coulomb gauge (see for instance \cite{libro-ss}*{Proposition 3.2}). One of the advantages of this particular choice of gauge lies in some elliptic regularity estimates, as we shall see later on.

    \subsection{The function \texorpdfstring{$\pmin$}{ρ}}
    The function $\pmin$ was firstly introduced by Lassoued and Mironescu in \cite{lassoued-mironescu}. Basically, it corresponds to a regularized version of $\sqrt{a_\varepsilon}$. A key tool developed in \cite{lassoued-mironescu} is the following decoupling of the energy $E_\ep$ (recall \eqref{energyLM}) 
        \begin{equation}\label{lm decoupling}
            E_\varepsilon(\pmin u) = E_\varepsilon(\pmin) + \frac{1}{2} \int_\Omega \pmin^2 |\nabla u|^2 +\pmin^4 \frac{(1 - |u|^2)^2}{2\varepsilon^2}.
        \end{equation}
        This in particular means that one can study the effect of pinning in terms of a weighted Ginzburg--Landau energy with homogeneous potential term $(1-|u|^2)^2$.

        The previous decoupling of the energy even holds if one replaces the gradient term $|\nabla u|$ by the covariant derivative $|\nabla_A u|$ in \eqref{energyLM}, that is, if one considers the energy functional
        $$E_\varepsilon(u,A)\colonequals  \frac{1}{2} \int_\Omega |\nabla_A u|^2 + \frac{(a_\varepsilon - |u|^2)^2}{2\varepsilon^2}.$$
        The following is a classical result, but we provide a proof for the sake of completeness.
        \begin{lemma}\label{LM decoupling}
            For any $(u,A) \in H^1(\Omega,\C) \times H^1(\Omega,\R^2)$, we have
            \begin{equation}\label{LM eq}
                E_\varepsilon(\pmin u,A)= E_\varepsilon(\pmin) + \frac{1}{2} \int_\Omega \pmin^2 |\nabla_A u|^2 + \pmin^4 \frac{(1-|u|^2)^2}{2\varepsilon}.
            \end{equation}
         \end{lemma}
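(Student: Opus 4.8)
The plan is to prove the decoupling identity \eqref{LM eq} by a direct computation that mirrors the gauge-free case \eqref{lm decoupling}, using the Euler--Lagrange equation \eqref{pde rho} for $\pmin$ to cancel the cross terms. First I would expand the covariant gradient of the product $\pmin u$. Since $\pmin$ is real-valued, we have $\nabla_A(\pmin u) = (\nabla - iA)(\pmin u) = (\nabla \pmin) u + \pmin (\nabla_A u)$, and therefore
\begin{equation*}
|\nabla_A(\pmin u)|^2 = |\nabla \pmin|^2 |u|^2 + \pmin^2 |\nabla_A u|^2 + 2\pmin \ip{(\nabla \pmin) u}{\nabla_A u}.
\end{equation*}
Here the key point is that $\ip{(\nabla\pmin)u}{\nabla_A u} = \ip{(\nabla\pmin)u}{(\nabla u)u} = (\nabla \pmin)\cdot \ip{u}{\nabla u}$ because the term $-iA\pmin u$ pairs with $(\nabla\pmin)u$ to give $-\pmin(\nabla\pmin)\ip{u}{iA u}$, and $\ip{u}{iAu}=A\ip{u}{iu}=0$ pointwise (as $\ip{z}{iz}=0$ for all $z\in\C$). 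Thus the magnetic potential drops out of the cross term entirely, and we recover $2\pmin(\nabla\pmin)\cdot\ip{u}{\nabla u} = (\nabla\pmin^2)\cdot\ip{u}{\nabla u} = \frac12 \nabla(\pmin^2)\cdot \nabla(|u|^2)$ after using $\ip{u}{\nabla u}=\frac12\nabla|u|^2$.

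Next I would handle the potential term. Writing $|\pmin u|^2 = \pmin^2|u|^2$, a purely algebraic manipulation gives
\begin{equation*}
(a_\ep - \pmin^2 |u|^2)^2 = (a_\ep - \pmin^2)^2 + \pmin^4(1-|u|^2)^2 + 2\pmin^2(a_\ep-\pmin^2)(1-|u|^2),
\end{equation*}
where the first term reconstructs the potential part of $E_\ep(\pmin)$ and the second reconstructs the weighted homogeneous potential appearing on the right-hand side of \eqref{LM eq}. Assembling the gradient and potential expansions, the right-hand side of \eqref{LM eq} emerges, provided the two remaining cross terms cancel after integration:
\begin{equation*}
\frac12\int_\Omega \nabla(\pmin^2)\cdot \ip{u}{\nabla u}
\quad\text{and}\quad
\frac{1}{2\ep^2}\int_\Omega 2\pmin^2(a_\ep-\pmin^2)(1-|u|^2).
\end{equation*}

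The heart of the argument, and the step I expect to require the most care, is showing these cross terms vanish together. Integrating the first cross term by parts (using $\frac12\nabla(\pmin^2)\cdot\nabla(|u|^2) = \pmin\,\nabla\pmin\cdot\nabla|u|^2$) and invoking the Neumann boundary condition $\partial\pmin/\partial\nu=0$ to kill the boundary integral, I would convert it into $-\int_\Omega (\Delta \pmin)\,\pmin\,(|u|^2-1)$ modulo the identity $\int_\Omega \pmin\nabla\pmin\cdot\nabla(|u|^2-1)$. Then the Euler--Lagrange equation \eqref{pde rho}, namely $-\Delta\pmin = \pmin(a_\ep-\pmin^2)/\ep^2$, substitutes directly to produce exactly the negative of the second cross term, so the two cancel. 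One must be attentive to the regularity needed to justify the integration by parts: since $u\in H^1$ and $\pmin$ is smooth and bounded below by $\sqrt b$, all products lie in $L^1$ and the integration by parts is valid (one may argue by density of smooth functions if desired). I would also note the apparent typo in the statement, where the last denominator reads $2\ep$ rather than $2\ep^2$; the computation clearly yields $2\ep^2$, matching \eqref{lm decoupling}, and I would carry the correct power throughout. Finally, I would remark that this identity is the magnetic analogue of \eqref{lm decoupling}, the only new ingredient being the pointwise vanishing $\ip{u}{iAu}=0$ that removes $A$ from the cross term.
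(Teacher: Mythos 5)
Your proposal is correct in substance, but it takes a genuinely different route from the paper's. The paper does not redo the Lassoued--Mironescu computation: it invokes the gauge-free decoupling \eqref{lm decoupling} as a known identity, expands
$|\nabla_{A}(\pmin u)|^2 = |\nabla(\pmin u)|^2 + \pmin^2|A|^2|u|^2 - 2\pmin \ip{\nabla(\pmin u)}{iAu}$,
observes that $\ip{u\nabla \pmin}{iAu}=0$ because $\nabla\pmin$ and $A$ are real-valued, and then reassembles $\pmin^2|\nabla_A u|^2$. In particular, no integration by parts and no use of the Euler--Lagrange equation \eqref{pde rho} appear in the paper's proof; those ingredients are hidden inside the cited identity \eqref{lm decoupling}. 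You instead prove everything from scratch: your argument amounts to reproving \eqref{lm decoupling} with the magnetic field present from the start, which has the merit of being self-contained and of making explicit that the decoupling is not purely algebraic --- it genuinely requires the equation and the Neumann condition satisfied by $\pmin$. Both proofs hinge on the same key pointwise observation that $\ip{u}{iAu}=0$ removes $A$ from the cross term, and you correctly note that the exponent $2\ep$ in the statement is a typo for $2\ep^2$.

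Two bookkeeping points in your write-up should be repaired, although they cancel and do not affect the conclusion. First, your list of ``remaining cross terms'' omits the term $\tfrac12\int_\Omega|\nabla\pmin|^2(|u|^2-1)$ produced when $|\nabla\pmin|^2|u|^2$ is split to reconstruct the gradient part of $E_\ep(\pmin)$; correspondingly, your integration by parts drops the $|\nabla\pmin|^2$ piece of $\diver(\pmin\nabla\pmin)=|\nabla\pmin|^2+\pmin\Delta\pmin$. These two omissions cancel exactly, so the net cancellation you claim is valid, but as written neither side of your final cancellation is correct in isolation. Second, your two displayed cross terms are normalized inconsistently: the first already carries the overall $\tfrac12$ prefactor of the energy, while the second keeps both the factor $2$ from the algebraic expansion and that prefactor, so the two expressions differ by a factor of $2$ from a common normalization. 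With consistent bookkeeping one finds
\begin{equation*}
\frac12\int_\Omega \pmin\nabla\pmin\cdot\nabla\bigl(|u|^2-1\bigr)
= -\frac12\int_\Omega \bigl(|\nabla\pmin|^2+\pmin\Delta\pmin\bigr)\bigl(|u|^2-1\bigr),
\end{equation*}
and substituting $-\Delta\pmin=\pmin(a_\ep-\pmin^2)/\ep^2$ cancels exactly the potential cross term $\tfrac{1}{2\ep^2}\int_\Omega \pmin^2(a_\ep-\pmin^2)(|u|^2-1)$ together with the omitted gradient term.
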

    
        \begin{proof}
            Expanding the square on $|\nabla_A u|^2$ we have
            $$|\nabla_A u|^2 = |\nabla (\pmin u)|^2 + \pmin^2 |A|^2|u|^2 - 2\pmin \ip{\nabla(\pmin u)}{iAu}.$$
            Combining with \eqref{lm decoupling}, we find
            \begin{multline*}
                E_\varepsilon(\pmin u) = E_\varepsilon(\pmin) + \frac{1}{2}\int_\Omega \pmin^2|\nabla u|^2 + \pmin^4 \frac{(1-|u|^2)^2}{2\varepsilon^2}\\ + \pmin^2 |A|^2 |u|^2 - 2\pmin (\ip{\pmin \nabla u}{iAu}+\ip{u \nabla \pmin}{iAu}).
            \end{multline*}
            Since $\nabla \pmin$ and $A$ are real-valued vector fields, $\ip{u \nabla \pmin}{iAu} = 0$. Thus, the RHS is equal to
            \begin{multline*}
                E_\varepsilon (\pmin) + \frac{1}{2} \int_\Omega \pmin^2(|\nabla u|^2 + |A|^2|u|^2 - 2 \ip{\nabla u}{iAu}) + \pmin^4 \frac{(1-|u|^2)^2}{2\varepsilon^2} \\= E_\varepsilon (\pmin) + \frac{1}{2} \int_\Omega \pmin^2|\nabla_A u| + \pmin^4 \frac{(1-|u|^2)^2}{2\varepsilon^2}.
            \end{multline*}
        \end{proof}

        Let us now state some regularity properties of $\rho_\ep$.
    \begin{prop}\label{proprho_ep}
        We have $\sqrt{b} \leq \rho_\ep\leq 1$ and $\norm{\nabla \pmin}{L^\infty(\Omega)} \leq \frac{C}{\varepsilon}$ for some $C>0$.
    \end{prop}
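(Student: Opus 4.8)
The plan is to prove the two assertions separately: the pointwise bounds by an energy comparison (truncation) argument, which is robust to the mere measurability of $a_\ep$, and the gradient estimate by rescaling at the natural length scale $\ep$, which converts the singular factor $\ep^{-2}$ into a bounded right-hand side on the unit scale where standard elliptic regularity applies. For the bounds $\sqrt b \le \pmin \le 1$, I would argue directly from the minimality of $\pmin$ for $E_\ep$. Replacing $\pmin$ by the truncation $\min(\pmin,1)$ does not increase the Dirichlet energy and, on the set $\{\pmin > 1\}$, strictly decreases the potential term, since there $\pmin^2 > 1 \ge a_\ep$; by minimality $\pmin \le 1$ a.e. Symmetrically, replacing $\pmin$ by $\max(\pmin,\sqrt b)$ does not increase the Dirichlet energy and strictly decreases the potential on $\{\pmin < \sqrt b\}$, because there $\pmin^2 < b \le a_\ep$ forces $a_\ep - \pmin^2 > a_\ep - b \ge 0$; hence $\pmin \ge \sqrt b$. (Equivalently, these bounds follow from the maximum principle applied to \eqref{pde rho}, as already noted after that equation.)

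For the gradient estimate, I would fix $x_0 \in \overline\Omega$ and rescale by setting $v(y) \colonequals \pmin(x_0 + \ep y)$ for $y$ in a ball of radius of order one. Since $\Delta_y v = \ep^2 (\Delta_x \pmin)(x_0 + \ep y)$, equation \eqref{pde rho} becomes
$$-\Delta v = v\left(\tilde a_\ep - v^2\right), \qquad \tilde a_\ep(y) \colonequals a_\ep(x_0 + \ep y),$$
whose right-hand side is bounded by $1$ in absolute value because $\sqrt b \le v \le 1$ and $\tilde a_\ep - v^2 \in [-1,1]$. Thus, on the unit ball, $v$ solves a Poisson equation with $L^\infty$ right-hand side and $\norm{v}{L^\infty} \le 1$, both bounds being independent of $\ep$ and $x_0$.

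I would then invoke elliptic regularity on this fixed scale. Since the right-hand side lies in $L^p$ for every $p < \infty$, the interior Calderón--Zygmund $W^{2,p}$ estimate for the Laplacian bounds $\norm{v}{W^{2,p}(B_{1/2})}$ in terms of $\norm{v}{L^\infty}$ and the $L^\infty$ norm of the right-hand side; choosing $p>2$ and using the two-dimensional Sobolev embedding $W^{2,p} \hookrightarrow C^{1,1-2/p}$ yields $\norm{\nabla v}{L^\infty(B_{1/2})} \le C$ with $C$ independent of $\ep$ and $x_0$. Undoing the scaling, $\nabla_x \pmin(x_0) = \ep^{-1}\nabla_y v(0)$, so $|\nabla \pmin(x_0)| \le C/\ep$. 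For $x_0$ near $\partial\Omega$ the same scheme applies after flattening the boundary: because $\partial\Omega$ is smooth, at scale $\ep$ it becomes asymptotically flat with uniformly bounded geometry, the Neumann condition $\partial_\nu \pmin = 0$ rescales to a homogeneous Neumann condition for $v$, and the boundary $W^{2,p}$ estimate for the Neumann Laplacian closes the argument with a constant depending only on $\Omega$ and $b$. I expect the boundary case to be the main technical point: one must verify that the boundary regularity constants stay uniform in $\ep$ (equivalently, that no curvature concentration occurs at scale $\ep$), and that the merely measurable coefficient $a_\ep$ does not obstruct the estimate — which it does not, since $a_\ep$ enters only through the bounded right-hand side and not through the leading-order operator.
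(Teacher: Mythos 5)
Your proof is correct, but it takes a genuinely different route from the paper, most substantially for the gradient bound. For the pointwise bounds, the paper tests the Euler--Lagrange equation \eqref{pde rho} against the truncations $\max\{\pmin,1\}$ and $\min\{\pmin,\sqrt{b}\}$ (a weak maximum principle argument), whereas you compare the energies of truncated competitors directly; these are two faces of the same idea, both robust to the mere measurability of $a_\ep$, and you correctly note the equivalence. For the gradient estimate, the paper avoids any blow-up: it invokes the global Gagliardo--Nirenberg-type interpolation inequality \eqref{neumann condition grad estimate} for $H^2$ functions with homogeneous Neumann data (quoted from Dos Santos), namely $\norm{\nabla u}{L^\infty(\Omega)}^2 \leq C\left(\norm{\Delta u}{L^\infty(\Omega)}+\norm{u}{L^\infty(\Omega)}\right)\norm{u}{L^\infty(\Omega)}$, feeds in $\norm{\Delta \pmin}{L^\infty(\Omega)}\leq C\ep^{-2}$ (immediate from \eqref{pde rho} and $0\leq \pmin, a_\ep\leq 1$), and obtains $\norm{\nabla \pmin}{L^\infty(\Omega)}\leq C\ep^{-1}$ in one line, with the boundary handled once and for all by the Neumann hypothesis built into the inequality. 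Your rescaling-plus-Calder\'on--Zygmund argument reaches the same conclusion and is self-contained modulo standard elliptic theory, but it is longer exactly where you predict: one must check uniform-in-$\ep$ constants for the boundary $W^{2,p}$ estimate after flattening, which does work here because $\partial\Omega$ is smooth, so at scale $\ep$ the flattening maps are $O(\ep)$-perturbations of the identity. What the paper's route buys is brevity and no boundary-chart bookkeeping; what yours buys is independence from the quoted interpolation lemma and a purely local argument.
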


    \begin{proof}
        The Euler--Lagrange equation associated with the energy functional \eqref{energyLM} is \eqref{pde rho}.
        Testing this equation against $\max\{\pmin(x),1\}$, we are led to
        $$0 \leq \int_{\{\pmin>1\}} |\nabla \pmin|^2 = \int_{\{\pmin>1\}} \pmin^2 (a_\varepsilon-\pmin^2).$$
        Since $\pmin^2(a_\varepsilon - \pmin^2) < 0$ when $\pmin > 1$, we deduce that $|\{\pmin > 1\}| = 0$, which means $\pmin \leq 1$. By testing against $\min\left\{\pmin(x),\sqrt{b}\right\}$, we obtain the other inequality.

        The estimate on the gradient follows from the Gagliardo--Nirenberg type inequality for functions $u \in H^2(\Omega)$ such that $\frac{\partial u}{\partial \nu} = 0$ on $\partial\Omega$ (see \cite{santos2019magnetic}*{Lemma 3.2})     
        \begin{equation}\label{neumann condition grad estimate}
            \norm{\nabla u}{L^\infty(\Omega)}^2 \leq C\left(\norm{\Delta u}{L^\infty(\Omega)} +\norm{u}{L
        ^\infty(\Omega)}\right) \norm{u}{L^\infty(\Omega)}.
        \end{equation}
        Indeed, since $\norm{\pmin}{L^\infty(\Omega)}, \norm{a_\varepsilon}{L^\infty(\Omega)} \leq 1$, from \eqref{pde rho} we obtain that $\norm{\Delta \pmin}{L^\infty(\Omega)} \leq \frac{C}{\varepsilon^2}$, which leads to
        $$\norm{\nabla \pmin}{L^\infty(\Omega)} \leq \frac{C}{\varepsilon}.$$
    \end{proof}

    \begin{prop}
        Suppose $a_\varepsilon \in H^1(\Omega)$. It holds that:
        \begin{enumerate}[leftmargin=*,label={\normalfont (\arabic*)}]
            \item There exists a constant $C>0$ such that $\norm{\nabla \pmin}{L^2(\Omega)} \leq C \norm{\nabla a_\varepsilon}{L^2(\Omega)}$.
            \item For $\alpha \in (0,1)$, let $X_\alpha \colonequals \{x \in \Omega \colon |a_\varepsilon(x) - \pmin(x)^2| > \varepsilon^{\alpha}\}$. Then, for some $C>0$, we have $$|X_\alpha| < C \norm{\nabla a_\varepsilon}{L^2(\Omega)}^2 \varepsilon^{2(1-\alpha)}.$$
        \end{enumerate}
    \end{prop}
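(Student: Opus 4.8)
The plan is to exploit that $\pmin$ is the global minimizer of $E_\varepsilon$ over $H^1(\Omega,\C)$ (recall \eqref{energyLM}) and to test minimality against the explicit competitor $\sqrt{a_\varepsilon}$, which is chosen precisely so that the potential term vanishes identically. First I would check that $\sqrt{a_\varepsilon}$ is admissible: since $a_\varepsilon \in H^1(\Omega)$ takes values in $[b,1]$ and $t\mapsto\sqrt{t}$ is Lipschitz on $[b,1]$ with derivative $\leq \frac{1}{2\sqrt b}$, the chain rule for Sobolev functions gives $\sqrt{a_\varepsilon}\in H^1(\Omega)$ with $\nabla\sqrt{a_\varepsilon}=\frac{\nabla a_\varepsilon}{2\sqrt{a_\varepsilon}}$ a.e., whence $|\nabla\sqrt{a_\varepsilon}|\leq \frac{1}{2\sqrt b}|\nabla a_\varepsilon|$ pointwise. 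Since $a_\varepsilon-(\sqrt{a_\varepsilon})^2\equiv 0$, this yields $E_\varepsilon(\sqrt{a_\varepsilon})=\frac12\int_\Omega|\nabla\sqrt{a_\varepsilon}|^2\leq \frac{1}{8b}\norm{\nabla a_\varepsilon}{L^2(\Omega)}^2$.

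The single inequality $E_\varepsilon(\pmin)\leq E_\varepsilon(\sqrt{a_\varepsilon})$ then delivers both statements. For (1) I would discard the nonnegative potential part of $E_\varepsilon(\pmin)$, obtaining $\frac12\int_\Omega|\nabla\pmin|^2\leq E_\varepsilon(\pmin)\leq \frac{1}{8b}\norm{\nabla a_\varepsilon}{L^2(\Omega)}^2$, that is $\norm{\nabla\pmin}{L^2(\Omega)}\leq \frac{1}{2\sqrt b}\norm{\nabla a_\varepsilon}{L^2(\Omega)}$, which is the claim with $C=\frac{1}{2\sqrt b}$.

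For (2) I would instead retain the potential part: the same minimality gives $\frac{1}{2\varepsilon^2}\int_\Omega(a_\varepsilon-\pmin^2)^2\leq E_\varepsilon(\pmin)\leq \frac{1}{8b}\norm{\nabla a_\varepsilon}{L^2(\Omega)}^2$, hence $\int_\Omega(a_\varepsilon-\pmin^2)^2\leq \frac{\varepsilon^2}{4b}\norm{\nabla a_\varepsilon}{L^2(\Omega)}^2$. A Chebyshev--Markov argument then finishes it: on $X_\alpha$ one has $(a_\varepsilon-\pmin^2)^2>\varepsilon^{2\alpha}$, so $\varepsilon^{2\alpha}|X_\alpha|\leq \int_\Omega(a_\varepsilon-\pmin^2)^2\leq \frac{\varepsilon^2}{4b}\norm{\nabla a_\varepsilon}{L^2(\Omega)}^2$, which rearranges to $|X_\alpha|\leq \frac{1}{4b}\varepsilon^{2(1-\alpha)}\norm{\nabla a_\varepsilon}{L^2(\Omega)}^2$, the claim with $C=\frac{1}{4b}$.

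I do not expect a serious obstacle here. The only points requiring care are the justification of $\sqrt{a_\varepsilon}$ as an $H^1$ competitor (the chain rule for the Lipschitz map $\sqrt{\cdot}$ composed with the $H^1$ function $a_\varepsilon$, which is legitimate precisely because $a_\varepsilon$ is bounded away from $0$) and the remark that $\pmin$, being the unique positive real-valued minimizer, realizes the global infimum of $E_\varepsilon$ on $H^1(\Omega,\C)$, so that comparison with the real competitor $\sqrt{a_\varepsilon}$ is valid (replacing any complex competitor by its modulus does not increase $E_\varepsilon$). Everything else is elementary, and the entire argument rests on the single competitor estimate $E_\varepsilon(\pmin)\leq E_\varepsilon(\sqrt{a_\varepsilon})\leq \frac{1}{8b}\norm{\nabla a_\varepsilon}{L^2(\Omega)}^2$.
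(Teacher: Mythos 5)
Your proof is correct and takes essentially the same route as the paper: both rest on the single comparison $E_\varepsilon(\pmin)\leq E_\varepsilon(\sqrt{a_\varepsilon})=\frac{1}{2}\norm{\nabla \sqrt{a_\varepsilon}}{L^2(\Omega)}^2\leq C\norm{\nabla a_\varepsilon}{L^2(\Omega)}^2$, discarding the (nonnegative) potential term of $E_\varepsilon(\pmin)$ for item (1) and retaining it together with a Chebyshev-type bound on $X_\alpha$ for item (2). The only difference is that you spell out the chain-rule justification that $\sqrt{a_\varepsilon}\in H^1(\Omega)$ (using $a_\varepsilon\geq b$) and track explicit constants, details the paper leaves implicit.
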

    \begin{proof}
            On the one hand, since $\pmin$ is a minimizer in $H^1(\Omega,\C)$, we have
            $$\norm{\nabla \pmin}{L^2(\Omega)} ^2 \leq E_\varepsilon(\pmin) \leq E_\varepsilon(\sqrt{a_\varepsilon}) = \frac{1}{2}\norm{\nabla \sqrt{a_\varepsilon}}{L^2(\Omega)}^2 \leq C \norm{\nabla a_\varepsilon}{L^2(\Omega)}^2.$$
            On the other hand,
            $$C\norm{\nabla a_\varepsilon}{L^2(\Omega)}^2 \geq E_\varepsilon(\pmin) \geq \int_{X_\alpha} \frac{(a_\varepsilon -\pmin^2)}{2\varepsilon^2} > \frac{|X_\alpha| \varepsilon^{2\alpha}}{\varepsilon^2} .$$
            Hence
            $$|X_\alpha| < C\norm{\nabla a_\varepsilon}{L^2(\Omega)}^2 \varepsilon^{2(1-\alpha)}.$$
    \end{proof}
    \begin{remark}
        Although we will not use this result in this paper, we present it to the reader to better understand the role of $\rho_\ep$. This result shows that, when $a_\ep$ is regular enough, $\pmin$ is a very close approximation of $\sqrt{a_\varepsilon}$, except for a very small set. This small set is expected to be located near the boundary and near the discontinuity regions of $a_\varepsilon$; see \cites{aftalion-alama-bronsard,santos2019magnetic} for some specific models.
    \end{remark}

    \subsection{Estimates for critical points in the Coulomb gauge}
        Let us present some estimates for configurations $(u,A)$ in the Coulomb-gauge, that is, when $A$ satisfies \eqref{coulomb gauge}. From \cite{libro-ss}*{Proposition 3.3}, we have
        \begin{equation}\label{coulomb h1 estimate}
            \norm{A}{H^1(\Omega)} \leq C \norm{\curl A}{L^2(\Omega)},
        \end{equation}
        and 
        \begin{equation}\label{coulomb h2 estimate}
            \norm{A}{H^2(\Omega)} \leq C \norm{\curl A}{H^1(\Omega)},
        \end{equation}
        where $C>0$ depends only on $\Omega$. These estimates play a crucial role on obtaining better regularity results for solutions of \eqref{euler lagrange gl}. In particular, we have the following three results.
        \begin{prop}
            Let $(\u,A) = (\pmin u,A)$ be a solution of \eqref{euler lagrange gl}, where $A$ satisfies \eqref{coulomb gauge}. Then
            \begin{equation}\label{coulomb linfty estimate}
                \norm{A}{L^\infty(\Omega)} \leq C(E_\ep(\pmin) + \fen(u,A))^{\frac{1}{2}},
            \end{equation}
            where $C>0$ depends only on $\Omega$.
        \end{prop}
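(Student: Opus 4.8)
The plan is to bound $\norm{A}{L^\infty(\Omega)}$ through the chain $\norm{A}{L^\infty(\Omega)} \leq C\norm{A}{H^2(\Omega)} \leq C\norm{\curl A}{H^1(\Omega)}$, where the first inequality is the two-dimensional Sobolev embedding $H^2(\Omega) \hookrightarrow L^\infty(\Omega)$ (valid since $\Omega \subset \R^2$) and the second is the Coulomb-gauge elliptic estimate \eqref{coulomb h2 estimate}, which is available precisely because $A$ satisfies \eqref{coulomb gauge}. Writing $h = \curl A$, the whole task then reduces to controlling $\norm{h}{H^1(\Omega)}^2 = \norm{h}{L^2(\Omega)}^2 + \norm{\nabla h}{L^2(\Omega)}^2$ by $E_\ep(\pmin) + \fen(u,A)$.

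First, I would record, via the Lassoued--Mironescu decoupling (Lemma \ref{LM decoupling}) applied to $\u = \pmin u$ together with the definition \eqref{free-energy-weight} of the weighted free energy, the identity
\[
E_\ep(\pmin) + \fen(u,A) = E_\ep(\u, A) + \frac{1}{2}\int_\Omega |\curl A|^2,
\]
where $E_\ep(\u,A) = \frac{1}{2}\int_\Omega |\nabla_A \u|^2 + \frac{(a_\ep - |\u|^2)^2}{2\ep^2}$. Since both terms on the right are nonnegative, this gives at once $\frac{1}{2}\norm{h}{L^2(\Omega)}^2 \leq E_\ep(\pmin) + \fen(u,A)$ and $\frac{1}{2}\int_\Omega |\nabla_A\u|^2 \leq E_\ep(\u,A) \leq E_\ep(\pmin) + \fen(u,A)$.

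For the gradient term I would use the current equation from \eqref{euler lagrange gl}, namely $-\nabla^\perp h = \ip{i\u}{\nabla_A \u}$, so that $|\nabla h| = |\ip{i\u}{\nabla_A\u}| \leq |\u|\,|\nabla_A\u| \leq |\nabla_A\u|$ pointwise, the last step using $|\u| \leq 1$ from the maximum principle \eqref{maxprinc|u|}. Integrating gives $\norm{\nabla h}{L^2(\Omega)}^2 \leq \int_\Omega |\nabla_A\u|^2 \leq 2(E_\ep(\pmin)+\fen(u,A))$. Combining this with the $L^2$ bound above yields $\norm{h}{H^1(\Omega)}^2 \leq 4(E_\ep(\pmin) + \fen(u,A))$, and feeding this into the opening chain closes the estimate with a constant $C$ depending only on $\Omega$.

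Since each ingredient is standard, I do not expect a genuine obstacle; the only points demanding care are the energy bookkeeping in the displayed identity --- which is where Lemma \ref{LM decoupling} and the nonnegativity of the discarded potential and magnetic terms enter --- and the reliance on the Coulomb gauge, which is exactly what makes \eqref{coulomb h2 estimate}, and hence the two-dimensional passage to $L^\infty$, legitimate.
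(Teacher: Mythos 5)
Your proof is correct and follows essentially the same route as the paper's: the current equation $-\nabla^\perp h = \ip{i\u}{\nabla_A\u}$ together with $|\u|\leq 1$ to bound $\norm{\nabla h}{L^2(\Omega)}$, the decoupling \eqref{LM eq} to convert $E_\ep(\u,A)$ into $E_\ep(\pmin)+\fen(u,A)$, and then \eqref{coulomb h2 estimate} plus the two-dimensional Sobolev embedding $H^2(\Omega)\hookrightarrow L^\infty(\Omega)$. Your packaging of the decoupling as the single identity $E_\ep(\pmin)+\fen(u,A)=E_\ep(\u,A)+\frac12\int_\Omega|\curl A|^2$, and your pointwise use of $|\u|\leq 1$, are only cosmetic (and arguably cleaner) variants of the paper's two-step bookkeeping.
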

        \begin{proof}
            From the second equation in \eqref{euler lagrange gl} and the Cauchy--Schwarz inequality, we deduce that
            $$\norm{\nabla \curl A}{L^2(\Omega)} = \norm{\ip{i \u}{\nabla_A \u}}{L^2(\Omega)} \leq \norm{\u}{L^2(\Omega)} \norm{\nabla_A \u}{L^2(\Omega)}.$$
            Since $|\u| \leq 1$ (recall \eqref{maxprinc|u|}), it follows that
            $$\norm{\nabla \curl A}{L^2(\Omega)}^2 \leq  \norm{\nabla_A \u}{L^2(\Omega)}^2 \leq  C E_\ep(\u,A).$$
            The decoupling \eqref{LM eq} then yields
            $$\norm{\nabla \curl A}{L^2(\Omega)}^2 \leq C \left(E_\ep(\pmin) + \fen(u,A) \right).$$
            Moreover, since $\norm{\curl A}{L^2(\Omega)}^2 \leq 2\fen(u,A)$, we deduce that
            $$\norm{\curl A}{H^1(\Omega)}^2 \leq C(E_\ep(\pmin) + \fen(u,A)).$$
            Finally, by \eqref{coulomb h2 estimate} and Sobolev embedding, we obtain \eqref{coulomb linfty estimate}.
        \end{proof}
        The hypotheses of our main result will allow us to control the RHS of \eqref{coulomb linfty estimate} by $\frac{C}{\ep}$, for a constant $C>0$ independent of $\ep$. This in turn allows us to obtain the following estimate.
        \begin{prop}
            Let $(\u,A) = (\pmin u,A)$ be a solution of \eqref{euler lagrange gl}, where $A$ satisfies \eqref{coulomb gauge} and $\|A\|_{L^\infty(\Omega)} \leq \frac{\tilde C}{\ep}$ for some $\tilde C>0$ not depending on $\ep$. Then
            \begin{equation}\label{coulomb grad estimate}
            \norm{\nabla \u}{L^\infty(\Omega)}\leq \frac{C}{\ep}\quad \mbox{and}\quad \norm{\nabla u}{L^\infty(\Omega)} \leq \frac{C}{\ep},
            \end{equation}
            where $C>0$ does not depend on $\ep$.
        \end{prop}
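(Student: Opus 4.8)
The plan is to work directly with the full order parameter $\u = \pmin u$, which satisfies the first equation in \eqref{euler lagrange gl}, and to bootstrap an $L^\infty$ gradient bound out of it; the bound for $u$ will then follow cheaply from the product rule and the regularity of $\pmin$ established in Proposition \ref{proprho_ep}. First I would record that, by the standard elliptic bootstrap for the Ginzburg--Landau system, any solution $(\u,A)$ of \eqref{euler lagrange gl} is smooth, so that $\norm{\nabla\u}{L^\infty(\Omega)}$ is a priori finite for each fixed $\ep$; this finiteness is precisely what will make the absorption argument below legitimate.

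Next I would rewrite the first GL equation in the Coulomb gauge. Expanding $(\nabla_A)^2\u = \Delta\u - i(\diver A)\u - 2iA\cdot\nabla\u - |A|^2\u$ and using $\diver A = 0$ turns \eqref{euler lagrange gl} into
\[
\Delta\u = 2iA\cdot\nabla\u + |A|^2\u - \frac{\u(a_\ep-|\u|^2)}{\ep^2}.
\]
Moreover, the boundary condition $\nabla_A\u\cdot\nu = 0$ together with $A\cdot\nu = 0$ on $\partial\Omega$ (part of \eqref{coulomb gauge}) reduces to $\partial_\nu\u = 0$, so that both the real and the imaginary part of $\u$ satisfy a homogeneous Neumann condition. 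Using $|\u|\leq 1$ (from \eqref{maxprinc|u|}), $a_\ep\in[b,1]$, and the hypothesis $\norm{A}{L^\infty(\Omega)}\leq\tilde C/\ep$, I would then estimate
\[
\norm{\Delta\u}{L^\infty(\Omega)}\leq \frac{2\tilde C}{\ep}\norm{\nabla\u}{L^\infty(\Omega)}+\frac{C}{\ep^2}.
\]

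The core of the argument is the absorption step. Applying the Gagliardo--Nirenberg inequality \eqref{neumann condition grad estimate} to $\operatorname{Re}\u$ and $\operatorname{Im}\u$ (both with vanishing normal derivative) and adding the resulting squared bounds gives $\norm{\nabla\u}{L^\infty(\Omega)}^2\leq C\left(\norm{\Delta\u}{L^\infty(\Omega)}+1\right)$, since $\norm{\u}{L^\infty(\Omega)}\leq 1$. Substituting the previous estimate and writing $G\colonequals\norm{\nabla\u}{L^\infty(\Omega)}$ yields a quadratic inequality of the form $G^2\leq \frac{C_1}{\ep}G+\frac{C_2}{\ep^2}$, whose positive root is $O(\ep^{-1})$; solving it gives $\norm{\nabla\u}{L^\infty(\Omega)}\leq C/\ep$. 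The only genuine obstacle lies exactly here: the term $2iA\cdot\nabla\u$ forces $\Delta\u$ to depend on the very quantity $\nabla\u$ we are trying to estimate, so the inequality is self-referential. It closes only because the feedback term is linear in $G$ while the left-hand side is quadratic, and because the a priori finiteness $G<\infty$ makes the absorption meaningful.

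Finally, for $u=\u/\pmin$ I would differentiate, $\nabla u=\frac{\nabla\u}{\pmin}-\frac{\u\,\nabla\pmin}{\pmin^2}$, and bound using $\pmin\geq\sqrt b$, $|\u|\leq1$, the gradient bound just proved, and $\norm{\nabla\pmin}{L^\infty(\Omega)}\leq C/\ep$ from Proposition \ref{proprho_ep}, obtaining $\norm{\nabla u}{L^\infty(\Omega)}\leq C/\ep$. This establishes \eqref{coulomb grad estimate}.
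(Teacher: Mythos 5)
Your proposal is correct and follows essentially the same route as the paper: expand the first Ginzburg--Landau equation in the Coulomb gauge, derive the homogeneous Neumann condition $\partial_\nu\u=0$, apply the Gagliardo--Nirenberg inequality \eqref{neumann condition grad estimate} to get a quadratic self-referential inequality for $\norm{\nabla\u}{L^\infty(\Omega)}$ that is absorbed to give the $O(\ep^{-1})$ bound, and then pass to $u=\u/\pmin$ via the quotient rule and Proposition \ref{proprho_ep}. Your added remarks (a priori smoothness making the absorption legitimate, and applying the inequality to real and imaginary parts) are sound refinements of points the paper leaves implicit.
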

        \begin{proof}
        By expanding the first equation in \eqref{euler lagrange gl}, using in particular \eqref{coulomb gauge}, we get
        $$-\Delta \u = \frac{\u (a_\ep-|\u|^2)}{\ep^2} - 2i A\u \cdot \nabla \u - |A|^2 \u^2.$$
        Moreover, from the boundary conditions $\nabla_A \u\cdot \nu =0$ and $A\cdot \nu=0$ on $\partial\Omega$, we get
        $$\frac{\partial \u}{\partial \nu} = 0 \text{ on }\partial \Omega.$$
        Therefore, $\u$ satisfies \eqref{neumann condition grad estimate}. Combining this with \eqref{maxprinc|u|} and our bound on $\|A\|_{L^\infty(\Omega)}$, we deduce that 
        $$\norm{\nabla \u}{L^\infty(\Omega)}^2 \leq C\left(\frac{1}{\ep^2}+\|A\|_{L^\infty(\Omega)}\norm{\nabla \u}{L^\infty(\Omega)}+\|A\|_{L^\infty(\Omega)}^2\right)
        \leq \frac{C}{\ep}\left(\frac1\ep +\norm{\nabla \u}{L^\infty(\Omega)} \right),$$
        from where it follows that
        $$\norm{\nabla \u}{L^\infty(\Omega)} \leq \frac{C}{\ep}.$$
        Finally, using Proposition \ref{proprho_ep}, we get
        $$\|\nabla u\|_{L^\infty(\Omega)} = \left\|\nabla \left(\frac{\u}{\pmin}\right)\right\|_{L^\infty(\Omega)} \leq C\left(\|\nabla \u\|_{L^\infty(\Omega)} + \|\nabla \pmin\|_{L^\infty(\Omega)} \right) \leq \frac{C}{\ep}.$$
        \end{proof}
        The gradient bound \eqref{coulomb grad estimate} plays a crucial role in the next clearing out result.
        \begin{prop}\label{clearing out}
            Let $(u,A) \in H^1(\Omega,\C) \times H^1(\Omega,\R^2)$ be a configuration such that 
            $$
            \norm{\nabla |u|}{L^{\infty}(\Omega)} \leq \frac{C}{\varepsilon}\quad \mathrm{and} \quad \fen(u,A) = o(1).
            $$
            Then $\norm{1-|u|}{L^\infty(\Omega)} = o(1)$.
        \end{prop}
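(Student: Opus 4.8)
The plan is to reduce the desired uniform bound to an $L^2$-smallness statement for $1-|u|^2$, which is controlled by the energy, and then to upgrade it to an $L^\infty$ bound using the Lipschitz-type control on $|u|$.

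First I would extract the potential part of the energy. Since $\pmin\geq \sqrt b$ by Proposition \ref{proprho_ep}, we have $\pmin^4\geq b^2$, so that
\begin{equation*}
\frac{b^2}{4\ep^2}\int_\Omega (1-|u|^2)^2\leq \frac12\int_\Omega \pmin^4\frac{(1-|u|^2)^2}{2\ep^2}\leq \fen(u,A)=o(1).
\end{equation*}
Hence $\int_\Omega (1-|u|^2)^2=o(\ep^2)$. This is the only place where the energy hypothesis enters.

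Next I would argue by contradiction. The bound $\norm{\nabla|u|}{L^\infty(\Omega)}\leq C/\ep$ makes $|u|$ Lipschitz continuous on $\overline\Omega$, so if the conclusion failed there would be $\delta\in(0,1]$, a sequence $\ep\to 0$, and points $x_\ep\in\overline\Omega$ with $\bigl|\,1-|u|(x_\ep)\,\bigr|\geq\delta$. Setting $r_\ep\colonequals\ep\delta/(2C)$, the Lipschitz bound gives $\bigl|\,|u|(x)-|u|(x_\ep)\,\bigr|\leq \delta/2$ for every $x\in B(x_\ep,r_\ep)$. Distinguishing the two cases $|u|(x_\ep)\leq 1-\delta$ and $|u|(x_\ep)\geq 1+\delta$, an elementary computation shows that $(1-|u|^2)^2\geq c_0\,\delta^2$ on $B(x_\ep,r_\ep)$ for some absolute constant $c_0>0$.

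Finally I would integrate this pointwise bound over $B(x_\ep,r_\ep)\cap\Omega$. Since $\Omega$ is smooth and bounded, it enjoys a uniform lower density property: there is $c_\Omega>0$ with $|B(x,r)\cap\Omega|\geq c_\Omega r^2$ for all $x\in\overline\Omega$ and all small $r$. Combining,
\begin{equation*}
\int_\Omega (1-|u|^2)^2\geq c_0\,\delta^2\,|B(x_\ep,r_\ep)\cap\Omega|\geq c_0 c_\Omega\,\delta^2 r_\ep^2=c_1\,\delta^4\,\ep^2,
\end{equation*}
which contradicts $\int_\Omega(1-|u|^2)^2=o(\ep^2)$ once $\ep$ is small enough. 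I expect the only genuine (if mild) obstacle to be the boundary case in this last step, namely guaranteeing that a definite fraction of the small ball lies inside $\Omega$; this is exactly where the smoothness of $\Omega$ is used, via its uniform interior cone condition. Everything else is elementary.
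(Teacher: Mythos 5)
Your proof is correct. It is, in essence, a self-contained reproduction of the argument that the paper outsources to a citation: the paper's proof consists of two lines, observing that $b^2 F_\ep(u,A)\leq \fen(u,A)=o(1)$ (since $\rho_\ep\geq\sqrt b$) and then invoking the classical clearing-out theorem for the homogeneous Ginzburg--Landau energy (Bethuel--Brezis--H\'elein, Theorem III.3), whose proof is precisely the Lipschitz-plus-potential-energy contradiction you wrote out. The comparison is mildly instructive: the paper's reduction bounds the \emph{entire} homogeneous free energy by the weighted one, whereas your argument makes explicit that only two ingredients are really used, namely the potential term $\int_\Omega \pmin^4(1-|u|^2)^2/\ep^2$ and the bound on $\nabla|u|$ (not on the full $\nabla u$), so your version is slightly more economical in its hypotheses and fully self-contained; the paper's version is shorter and places the result in its classical context. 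Two minor points you should tighten if you write this up: the passage from $\norm{\nabla|u|}{L^\infty(\Omega)}\leq C/\ep$ to a Lipschitz bound on $\overline\Omega$ uses the quasiconvexity of the smooth bounded domain $\Omega$, so the Lipschitz constant is $C_\Omega C/\ep$ rather than $C/\ep$ (harmless, it only shrinks $r_\ep$ by a fixed factor); and since no a priori bound $|u|\leq 1$ is assumed, your treatment of the case $|u|(x_\ep)\geq 1+\delta$ is indeed necessary, and you correctly included it.
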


        \begin{proof}
            Observe that, since $\rho_\ep\geq \sqrt{b}$ (recall Proposition \ref{proprho_ep}), we have
            $$
            b^2F_\ep(u,A)\leq \fen(u,A)=o(1).
            $$
            Therefore, the proposition directly follows from the classical clearing out result for the homogeneous Ginzburg--Landau energy, which goes back to the seminal work of Bethuel, Brezis and Helein \cite{bethuel-brezis-helein}*{Theorem III.3}.

        \end{proof}
    \subsection{Vorticity estimate}
        Recall that the vorticity is defined as
        $$\mu(u,A) = \curl(\ip{iu}{\nabla_A u}+A).$$
        It is well known that in the homogeneous case, under adequate bounds on the free energy $F_\ep(u,A)$, $\mu(u,A)$ essentially acts as a sum of Dirac masses centered at the vortices when tested against sufficiently regular functions vanishing on the boundary. This also happens in the inhomogeneous case, since $b^2 F_\ep(u,A) \leq \fen(u,A)$. We have the following version of \cite{libro-ss}*{Theorem 6.1}.
        \begin{prop}\label{prop:vorticityestimate}
            Let $\mathcal{B} = \{B_i\}_i = \{B(a_i,r_i)\}_i$ be a finite collection of disjoint closed balls and $\varepsilon > 0$ such that 
            \begin{equation}\label{contención}\left\{x \in \Omega_\varepsilon \colon ||u(x)|-1| \geq \frac{1}{2}\right\} \subseteq \bigcup_{i} B_i,\end{equation}
            where $\Omega_\ep \colonequals \{x \in \Omega \colon \operatorname{dist}(x,\partial \Omega) > \ep\}$. Then, for any $r= \sum_i r_i \leq 1, \varepsilon \leq 1$, there exists a universal constant $C>0$ such that
            \begin{equation}\label{jacobian estimate}
                \norm{\mu - 2\pi \sum_i d_{B_i} \delta_{a_i}}{\left(C^{0,1}_0(\Omega)\right)^*} \leq C \max\{\varepsilon,r\} \left(1+\frac{M}{b^2}\right),
            \end{equation}
            where $d_{B_i} = \deg(u,\partial B_i)$ if $B_i \subset \Omega_\varepsilon$ and 0 otherwise, $M=\fen(u,A)$, and $\left(C^{0,1}_0(\Omega)\right)^*$ is the dual space of $C^{0,1}_0(\Omega) = W^{1,\infty}_0(\Omega)$.
        \end{prop}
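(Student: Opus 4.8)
The plan is to deduce this weighted statement directly from the classical (homogeneous) vorticity estimate \cite{libro-ss}*{Theorem 6.1}, using the pointwise comparison between the weighted free energy $\fen$ and the unweighted one $F_\ep$. The first thing to notice is that every object appearing in \eqref{jacobian estimate} other than $M$ is intrinsic to the pair $(u,A)$ and completely insensitive to the weight $\pmin$: the vorticity $\mu(u,A)=\curl(\ip{iu}{\nabla_A u}+A)$, the covering balls $B_i$, and the degrees $d_{B_i}=\deg(u,\partial B_i)$ are all defined exactly as in the homogeneous theory, and the covering hypothesis \eqref{contención} is verbatim the one used there. Consequently, the weight can only enter the estimate through the energy quantity on the right-hand side.

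Second, I would record the energy comparison that makes the reduction work. By Proposition \ref{proprho_ep} we have $\pmin \geq \sqrt{b}$, so $\pmin^2 \geq b \geq b^2$ and $\pmin^4 \geq b^2$; since also $1 \geq b^2$, a term-by-term comparison of
$$
\fen(u,A)=\frac12\int_\Omega \pmin^2|\nabla_A u|^2+\pmin^4\frac{(1-|u|^2)^2}{2\ep^2}+|\curl A|^2
$$
with the analogous unweighted integrand yields $\fen(u,A)\geq b^2 F_\ep(u,A)$, that is, $F_\ep(u,A)\leq M/b^2$. Applying \cite{libro-ss}*{Theorem 6.1} to the configuration $(u,A)$ under the hypothesis \eqref{contención} and with $r,\ep\leq 1$ produces a universal constant $C>0$ such that
$$
\norm{\mu-2\pi\sum_i d_{B_i}\delta_{a_i}}{\left(C^{0,1}_0(\Omega)\right)^*}\leq C\max\{\ep,r\}\bigl(1+F_\ep(u,A)\bigr),
$$
and substituting the bound $F_\ep(u,A)\leq M/b^2$ gives exactly \eqref{jacobian estimate}.

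Since the argument is a clean reduction, I do not expect a genuine analytic obstacle here: all the hard work—the ball-by-ball lower bounds, the stitching of local Jacobian estimates, and the passage to the dual Lipschitz norm—is already packaged in the classical theorem, and the inhomogeneity manifests only through the explicit constant $1/b^2$, leaving $C$ universal. The one point that deserves a moment of care is to confirm that the hypotheses of \cite{libro-ss}*{Theorem 6.1} match ours verbatim—in particular the convention that $d_{B_i}=0$ for balls not contained in $\Omega_\ep$, the restriction $r=\sum_i r_i\leq 1$, and the normalization $\ep\leq 1$—so that the cited estimate can be quoted without modification. Once this bookkeeping is checked, the proof is complete.
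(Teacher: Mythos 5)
Your proof is correct and takes essentially the same route as the paper: the paper's own proof consists precisely of the observation $b^2 F_\ep(u,A) \leq \fen(u,A)$ (valid since $\pmin^2 \geq b \geq b^2$, $\pmin^4 \geq b^2$, and $1 \geq b^2$) followed by an appeal to \cite{libro-ss}*{Theorem 6.1}. Your formulation, invoking that theorem as a black box applied to $(u,A)$ and then substituting $F_\ep(u,A) \leq M/b^2$ into its (monotone) right-hand side, is the same reduction stated slightly more explicitly.
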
 
        \begin{proof}
             Since $b^2 F_\ep(u,A) \leq \fen(u,A)$, the proof is exactly the same as the proof of \cite{libro-ss}*{Theorem 6.1}.
        \end{proof}

    \subsection{Approximation of the Meissner state}
         We want to estimate the minimal energy among vortexless configurations. Heuristically, a good starting point would be to consider a pair of the form $(\sqrt{a_\ep},A)$, where $A$ minimizes $GL_{\varepsilon}(\sqrt{a_\varepsilon},\ \cdot \ )$ in a suitable space. However, $a_\varepsilon$ may not be in $H^1(\Omega)$. This leads to the introduction of the aforementioned function $\rho_\ep$. 
         
         Taking into account that $\rho_\ep$ is essentially a regularized version of $\sqrt{a_\ep}$, we proceed to the task of minimizing $GL_\ep(\rho_\ep, \ \cdot \ )$. For convenience, we next work with $h_{\ex} A$ instead of $A$.
         
         Observe that from the energy decoupling \eqref{LM eq}, we have
        $$GL_\ep(\pmin,h_{\ex} A) = E_\ep(\pmin) + \frac{1}{2}\int_\Omega \pmin^2 h_{\ex}^2 |A|^2 + h_\ex^2|\curl A - 1|^2.$$
        This leads us to look for vector fields $A$ that minimize the reduced energy functional
        $$J(A) = \frac{1}{2}\int_\Omega \pmin^2|A|^2 + |\curl A - 1|^2.$$
        Without loss of generality, we can look for minimizers in the Coulomb gauge, that is, vector-fields that satisfy \eqref{coulomb gauge}. Note that $J$ is strictly convex. Furthermore, since we look for minimizers in the Coulomb gauge, we have $\norm{A}{H^1(\Omega)} \leq C \norm{\curl A}{L^2(\Omega)}$. It follows that $J$ is strictly convex and coercive in the space
        $$\left\{A\in H^1(\Omega, \R^2) \ \colon \ \diver A=0\ \mbox{in }\Omega,\ A\cdot \nu=0 \ \mbox{on }\partial\Omega\right\}$$ and, as a result, there exists a unique minimizer $A^0_\ep$ of $J$ in this space. This minimizer satisfies the associated Euler--Lagrange equation
        \begin{equation}\label{pde A0}
        \left\{
        \begin{array}{rccl}
            -\nabla^\perp \curl A^0_\ep +\pmin^2 A^0_\ep&=&0&\mbox{in }\Omega\\
            \curl A^0_\ep&=&1&\mbox{on }\partial\Omega.
        \end{array}
        \right.
        \end{equation}
        This in particular means that (recall that $0<b\leq \rho_\ep^2\leq 1$ in $\Omega$)
        $$A^0_\ep = \frac{\nabla^\perp \curl A^0_\ep}{\pmin^2}\quad \mbox{in}\ \Omega.$$ 
        Also, letting $\hfield = \curl A^0_\ep$, by taking the $\curl$ of the PDE in \eqref{pde A0}, we deduce that $\hfield$ solves
        \begin{equation}\label{pde h0}
        \left\{
        \begin{array}{rccl}
            -\diver \left(\dfrac{\nabla \hfield}{\pmin^2}\right) +\hfield &=&0&\mbox{in }\Omega\\
            \hfield&=&1&\mbox{on }\partial\Omega.
        \end{array}
        \right.
        \end{equation}
        Finally, we let $\xfield = 1-\hfield$ to deduce that \eqref{pde xi} holds true. It is worth remarking that $\xfield$ is the analog of the function $\xi_0$ that appears in the analysis of the Ginzburg--Landau energy functional without pinning (i.e. \eqref{GLenergy} when $a_\ep\equiv 1$ in $\Omega$); see for instance \cite{sylvia-ccm}. 
     \begin{prop} We have that
        \begin{align}
            0 \leq \hfield \leq 1&\quad \mathrm{in}\ \overline\Omega,\label{h maximum principle}\\
            0 \leq \xfield \leq 1&\quad \mathrm{in}\ \overline\Omega.\label{xi maximum principle}
        \end{align}
    \end{prop}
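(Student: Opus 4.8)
The plan is to prove the two-sided bound for $\hfield$ directly and then read off the bound for $\xfield$ from the identity $\xfield = 1-\hfield$ established just before the statement; indeed $0\leq \hfield\leq 1$ in $\overline\Omega$ is \emph{equivalent} to $0\leq \xfield\leq 1$ in $\overline\Omega$, so there is no need to treat the two problems separately. Recall that $\hfield$ is the weak solution of the Dirichlet problem \eqref{pde h0} for the operator $Lu\colonequals -\diver(\nabla u/\pmin^2)+u$. Since $b\leq \pmin^2\leq 1$ in $\Omega$ (Proposition \ref{proprho_ep}), the coefficient $1/\pmin^2$ is bounded, measurable, and bounded below by $1$, so $L$ is uniformly elliptic; crucially, its zeroth-order coefficient equals $+1\geq 0$, which is exactly the sign needed for the weak maximum principle. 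I would run a standard truncation (Stampacchia) argument, testing the weak formulation of \eqref{pde h0} against suitable parts of $\hfield-1$ and $\hfield$, each of which lies in $H_0^1(\Omega)$ precisely because $\hfield=1$ on $\partial\Omega$.

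For the upper bound, take $w\colonequals (\hfield-1)_+\in H_0^1(\Omega)$ as a test function. Since $\nabla 1=0$, the weak form of \eqref{pde h0} reads
\[
\int_\Omega \frac{\nabla \hfield\cdot \nabla w}{\pmin^2} + \int_\Omega \hfield\,w = 0 .
\]
On the set $\{\hfield>1\}$ one has $\nabla w=\nabla \hfield$ and $\hfield w=\hfield(\hfield-1)\geq 0$, while both integrands vanish elsewhere; hence the first term equals $\int_\Omega |\nabla w|^2/\pmin^2\geq 0$ and the second is $\geq 0$. A sum of two nonnegative quantities being zero forces each to vanish, so $\nabla w=0$ a.e.; as $w\in H_0^1(\Omega)$ this gives $w\equiv 0$, i.e.\ $\hfield\leq 1$ a.e.

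For the lower bound I would repeat the computation with $v\colonequals \min\{\hfield,0\}\in H_0^1(\Omega)$, which yields
\[
\int_\Omega \frac{|\nabla v|^2}{\pmin^2} + \int_{\{\hfield<0\}} \hfield^2 = 0 ,
\]
and the same positivity argument forces $v\equiv 0$, i.e.\ $\hfield\geq 0$ a.e. Finally, since $1/\pmin^2$ is bounded and uniformly elliptic, elliptic regularity (in fact the $W^{1,\infty}$ bound recorded after \eqref{pde xi}, together with $\hfield=1-\xfield$) makes $\hfield$ continuous up to $\partial\Omega$, so the a.e.\ inequalities hold throughout $\overline\Omega$, and \eqref{h maximum principle}--\eqref{xi maximum principle} follow. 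I do not expect any genuine obstacle here: the argument is the textbook weak maximum principle, and the only points requiring care are that the Dirichlet datum $1$ makes the truncations admissible test functions in $H_0^1(\Omega)$ and that the zeroth-order term carries the favorable sign.
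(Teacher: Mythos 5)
Your proof is correct and takes essentially the same route as the paper: both establish the two-sided bound for $\hfield$ via the weak maximum principle for the operator $-\diver(\nabla\cdot/\pmin^2)+\mathrm{Id}$ (whose zeroth-order coefficient has the favorable sign) and then read off the bounds for $\xfield$ from $\xfield=1-\hfield$; your Stampacchia truncation with $(\hfield-1)_+$ and $\min\{\hfield,0\}$ is simply the standard proof of the maximum principle that the paper invokes without detail. One caveat: your final step upgrading the a.e.\ bounds to bounds on $\overline\Omega$ appeals to Proposition \ref{prop:linftyxi}, which is circular in the paper's logical order, since that proposition rests on \eqref{boundH1hfield} and \eqref{xi h1 estimate}, whose derivations use \eqref{xi maximum principle}; this is easily repaired by instead citing De Giorgi--Nash--Moser continuity up to the boundary for uniformly elliptic equations with bounded measurable coefficients, or by observing that the a.e.\ bounds are all that is used elsewhere.
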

    \begin{proof}
        From \eqref{pde h0}, by applying the maximum principle, we deduce that $\hfield \leq \max_{\partial \Omega} {\hfield}^+ = 1$ and $\hfield \geq -\max_{\partial \Omega} {\hfield}^- = 0$, where ${\hfield}^+ = \max\{\hfield,0\}$ and ${\hfield}^- = -\min\{\hfield,0\}$. The bounds for $\xfield$ follow immediately since $\xfield = 1-\hfield$.
    \end{proof}    
    An elemental consequence of the preceding proposition is that 
    \begin{equation}\label{xi h1 estimate}
        \norm{\xfield}{H^1(\Omega)} \leq C,
    \end{equation}
    where $C>0$ \emph{does not depend on $\ep$}. To see this, we test the equation in \eqref{pde xi} against $\xfield$ and use \eqref{xi maximum principle} and $\pmin^2 \leq 1$.
    $$\norm{\xfield}{H^1(\Omega)}^2 \leq \int_\Omega \frac{|\nabla \xfield|^2}{\pmin^2} + |\xfield|^2 = \int_\Omega \xfield \leq |\Omega|.$$
    Analogously, 
    \begin{equation}\label{boundH1hfield}\norm{\hfield}{H^1(\Omega)} \leq C.\end{equation}
    However, a rather surprising fact is that such a bound also holds for $W^{1,\infty}_0(\Omega)$.
    \begin{prop}\label{prop:linftyxi}
        We have that
        \begin{equation}\label{xi lipschitz estimate}
        \norm{\nabla \xfield}{L^\infty(\Omega)} \leq C,
        \end{equation}
        where $C>0$ does not depend on $\ep$.
    \end{prop}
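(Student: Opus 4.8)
The plan is to recast the estimate as a uniform $L^\infty$ bound on the flux $q_\ep:=\rho_\ep^{-2}\nabla\xfield$. Since $\rho_\ep^2\le 1$ pointwise, one has $|\nabla\xfield|=\rho_\ep^2\,|q_\ep|\le|q_\ep|$, so it suffices to prove $\norm{q_\ep}{L^\infty(\Omega)}\le C$ with $C$ independent of $\ep$. Two structural features should drive the argument. First, the divergence of the flux is bounded: rewriting \eqref{pde xi} gives $\diver q_\ep=\xfield-1$, and by \eqref{xi maximum principle} this lies in $[-1,0]$, so $\norm{\diver q_\ep}{L^\infty(\Omega)}\le 1$. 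Second, $q_\ep$ is slaved to the \emph{bounded} scalar $\hfield$, through $\nabla\xfield=-\nabla\hfield$, so that $q_\ep=-\rho_\ep^{-2}\nabla\hfield$, with $\curl A^0_\ep=\hfield\in[0,1]$.

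Next I would Hodge-decompose $q_\ep=\nabla\Phi_\ep+\nabla^\perp\Psi_\ep$ on the simply connected domain $\Omega$, taking $\Phi_\ep$ to solve the Neumann problem $\Delta\Phi_\ep=\diver q_\ep=\xfield-1$ with $\partial_\nu\Phi_\ep=q_\ep\cdot\nu$. The curl-free part is then harmless: since $\xfield-1\in L^\infty(\Omega)$ with norm at most $1$, Calderón--Zygmund theory yields $\Phi_\ep\in W^{2,p}(\Omega)$ for every finite $p$, hence $\nabla\Phi_\ep\in L^\infty(\Omega)$ with a bound depending only on $\Omega$. Consequently the entire difficulty is concentrated in the divergence-free remainder $\nabla^\perp\Psi_\ep$, whose potential obeys $\Delta\Psi_\ep=\curl q_\ep$, a pointwise product of $\nabla(\rho_\ep^{-2})$ with $\nabla\xfield$.

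Before attacking the general case I would record that the divergence structure alone already closes the estimate in the radially symmetric model, which is a good sanity check. If $\Omega$ is a disc and $\rho_\ep,\xfield$ are radial, then $q_\ep$ is curl-free, and integrating the radial form of \eqref{pde xi} gives $\rho_\ep^{-2}\xfield'(r)=r^{-1}\int_0^r s(\xfield(s)-1)\,ds$, whence $|\nabla\xfield|=\rho_\ep^2\,|\xfield'(r)|\le r/2$, with no reference whatsoever to the regularity of $\rho_\ep$. To handle $\curl q_\ep$ in the general case I would pass to $\hfield$: since $\hfield\in[0,1]$ solves \eqref{pde h0}, I would combine the $\ep$-uniform $C^{0,\alpha}$ control of $\hfield$ furnished by De Giorgi--Nash--Moser (available because the operator is uniformly elliptic, $\rho_\ep^2\in[b,1]$) with the fixed-$\ep$ smoothness of $\rho_\ep$ to run a barrier/maximum-principle argument for $|\nabla\hfield|$, and then verify that the resulting constant does not depend on $\ep$.

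The main obstacle is precisely this final step: controlling the divergence-free part $\nabla^\perp\Psi_\ep$, equivalently $\curl q_\ep$, in which $\nabla(\rho_\ep^{-2})$ is only $O(\ep^{-1})$. Uniform ellipticity by itself yields only the Hölder bound of De Giorgi--Nash or the Meyers exponent $p_0<\infty$, so this is the unique place where the ``special structure'' of \eqref{pde xi}--\eqref{pde h0} must be exploited essentially, rather than invoked as a black-box elliptic estimate. The heart of the proof is therefore to show that the boundedness $0\le\hfield\le1$, together with the precise divergence form of the operator, prevents the gradient concentration that a generic bounded measurable coefficient could produce, thereby closing the $\ep$-uniform bound; I expect this to be the most delicate part of the argument and the one requiring the finest use of the hypotheses on $\rho_\ep$.
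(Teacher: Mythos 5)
Your proposal correctly identifies the crux of the matter, but it does not supply it, so what you have is a program rather than a proof. The gap is the one you yourself flag: the divergence-free part of the Hodge decomposition, governed by $\curl q_\ep$, is never estimated. Moreover, the tools you suggest for it cannot work uniformly in $\ep$: De Giorgi--Nash--Moser and Meyers give exactly the $\ep$-uniform $C^{0,\alpha}$ bound on $\hfield$ and $L^{p_0}$ bound on $\nabla\hfield$ that you concede are insufficient, while any barrier or Bernstein-type argument for $|\nabla\hfield|$ must differentiate the equation, hence the coefficient $\pmin^{-2}$, and by Proposition \ref{proprho_ep} this costs a factor $\norm{\nabla (\pmin^{-2})}{L^\infty(\Omega)}=O(\ep^{-1})$, so no $\ep$-independent constant can come out. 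Your radial sanity check does not probe this difficulty at all, since radial symmetry forces $\nabla\pmin$ and $\nabla\xfield$ to be parallel, i.e. $\curl q_\ep=0$, which is the very fact in question. There is also a secondary gap in the part you call harmless: for the Neumann problem $\Delta\Phi_\ep=\xfield-1$, $\partial_\nu\Phi_\ep=q_\ep\cdot\nu$, the boundary datum $q_\ep\cdot\nu=\pmin^{-2}\partial_\nu\xfield$ is exactly as unknown as $q_\ep$ itself (what vanishes on $\partial\Omega$ is the tangential component $q_\ep\cdot\tau$, because $\xfield\equiv 0$ there), so Calder\'on--Zygmund theory does not yield $\norm{\nabla \Phi_\ep}{L^\infty(\Omega)}\le C(\Omega)$ as claimed.

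For comparison, the paper's proof is a few lines long and consists, in effect, of asserting the fact you are missing. It writes $|\nabla \xfield|=\pmin^2|A^0_\ep|\le |A^0_\ep|$ with $A^0_\ep=-\pmin^{-2}\nabla^\perp\xfield=-q_\ep^\perp$, invokes that $A^0_\ep$ satisfies the Coulomb gauge \eqref{coulomb gauge}, and concludes from \eqref{coulomb h2 estimate}, \eqref{boundH1hfield}, and the two-dimensional embedding $H^2(\Omega)\hookrightarrow L^\infty(\Omega)$ that $\norm{A^0_\ep}{L^\infty(\Omega)}\le C\norm{A^0_\ep}{H^2(\Omega)}\le C\norm{\hfield}{H^1(\Omega)}\le C$. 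Now observe that $\diver A^0_\ep=\curl q_\ep$: the Coulomb-gauge property $\diver A^0_\ep=0$ is \emph{literally} the curl-freeness of your flux, so the paper disposes of your ``main obstacle'' through this single structural assertion. It is not deduced from the scalar problem \eqref{pde xi} --- your own computation $\curl q_\ep=\nabla^\perp(\pmin^{-2})\cdot\nabla\xfield$ shows it is not a pointwise identity --- but from the construction of $A^0_\ep$ as the minimizer of $J$ over divergence-free tangential fields, together with the identification of the Euler--Lagrange equation of that constrained problem with \eqref{pde A0}. Consequently, to complete your argument along the paper's lines, the one statement you must prove is precisely this identification: that the Coulomb-gauge minimizer satisfies \eqref{pde A0} with no Lagrange-multiplier term, equivalently that a single field can be simultaneously divergence-free and of the form $\pmin^{-2}\nabla^\perp(\text{scalar})$. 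That identification is where the ``special structure'' actually resides; note that it is delicate, since your pointwise computation shows it forces $\nabla\pmin$ and $\nabla\xfield$ to be parallel wherever both are nonzero, so this step deserves a careful justification rather than either the black-box elliptic estimates you rightly distrust or a bare appeal to \eqref{coulomb gauge}.
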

    \begin{proof}
        In this proof, $C>0$ denotes a constant independent of $\ep$ that may change from line to line.
        Recall that $A^0_\ep = -\frac{\nabla^\perp \xfield}{\pmin^2}$. Therefore, using Proposition \ref{proprho_ep}, we have
        \begin{equation}\label{linf}\|\nabla \xfield\|_{L^\infty(\Omega)} = \|\pmin^2 A^0_\ep\|_{L^\infty(\Omega)} \leq \|A^0_\ep\|_{L^\infty(\Omega)}.\end{equation}
        On the other hand, since $A^0_\ep$ satisfies \eqref{coulomb gauge}, \eqref{coulomb h2 estimate} yields
        $$\norm{A^0_\ep}{H^2(\Omega)} \leq C\norm{\curl A^0_\ep}{H^1(\Omega)} = C \norm{\hfield}{H^1(\Omega)}.$$
        Combining with \eqref{boundH1hfield}, we deduce that
        $$\norm{A^0_\ep}{H^2(\Omega)} \leq C,$$
        which, by Sobolev embedding, yields
        $$\norm{A^0_\ep}{L^\infty(\Omega)} \leq C.$$
        Inserting this in \eqref{linf} concludes the proof.
        
    \end{proof}

        \begin{prop}\label{prop:liminf} We have that
            $$
            \liminf_{\ep \to 0} \max_\Omega \xi_\ep>0.
            $$            
        \end{prop}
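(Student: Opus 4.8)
The plan is to exploit the variational characterization of $\xfield$ together with the uniform ellipticity bound $\pmin^2\geq b$ from Proposition~\ref{proprho_ep}, comparing $\xfield$ against a single $\ep$-independent competitor. Since $\xfield$ is the unique weak solution of \eqref{pde xi}, it is equivalently the unique minimizer over $H^1_0(\Omega)$ of the strictly convex functional
\begin{equation*}
\mathcal{F}_\ep(v)\colonequals \frac12\int_\Omega \frac{|\nabla v|^2}{\pmin^2}+\frac12\int_\Omega v^2-\int_\Omega v .
\end{equation*}
Testing \eqref{pde xi} against $\xfield$ gives $\int_\Omega \pmin^{-2}|\nabla\xfield|^2+\xfield^2=\int_\Omega\xfield$, so the minimal value is $\mathcal F_\ep(\xfield)=-\tfrac12\int_\Omega\xfield$; in particular it suffices to bound $\int_\Omega\xfield$ from below uniformly in $\ep$, since $\max_\Omega\xfield\geq \frac{1}{|\Omega|}\int_\Omega\xfield$.

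First I would fix, once and for all, an $\ep$-independent competitor: let $\eta\in H^1_0(\Omega)$ solve the constant-coefficient problem $-\frac1b\Delta\eta+\eta=1$ in $\Omega$, $\eta=0$ on $\partial\Omega$. By the strong maximum principle $\eta>0$ in $\Omega$, hence $\int_\Omega\eta>0$ is a strictly positive number depending only on $\Omega$ and $b$. (Equivalently, any fixed nonnegative $\phi\in H^1_0(\Omega)\setminus\{0\}$, suitably rescaled, works as a competitor.)

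The key step is then the energy comparison. Since $\pmin^{-2}\leq b^{-1}$ and $\eta$ solves its own ($\ep$-independent) equation, we have
\begin{align*}
-\tfrac12\int_\Omega\xfield=\mathcal F_\ep(\xfield)&\leq \mathcal F_\ep(\eta)=\frac12\int_\Omega\frac{|\nabla\eta|^2}{\pmin^2}+\frac12\int_\Omega\eta^2-\int_\Omega\eta\\
&\leq \frac{1}{2b}\int_\Omega|\nabla\eta|^2+\frac12\int_\Omega\eta^2-\int_\Omega\eta=-\tfrac12\int_\Omega\eta,
\end{align*}
where the last equality comes from testing the equation for $\eta$ against $\eta$. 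Rearranging gives $\int_\Omega\xfield\geq\int_\Omega\eta$ for every $\ep$, and therefore $\max_\Omega\xfield\geq \frac{1}{|\Omega|}\int_\Omega\eta>0$ uniformly in $\ep$, which yields $\liminf_{\ep\to0}\max_\Omega\xfield>0$.

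I do not expect a genuine obstacle: this is a soft variational comparison, and the only points needing care are bookkeeping ones---correctly identifying the minimal energy as $-\tfrac12\int_\Omega\xfield$ and tracking the direction of the inequality produced by $\pmin^{-2}\leq b^{-1}$. It is essential that the unfavorable coefficient multiplies $|\nabla\eta|^2$ for the \emph{fixed} competitor (where the bound $\pmin^{-2}\le b^{-1}$ helps) rather than $|\nabla\xfield|^2$. The hypothesis truly used is only the uniform ellipticity $b\leq\pmin^2\leq 1$, which severs the estimate from the $\ep$-dependence of $\pmin$; no extra regularity of $a_\ep$ is required.
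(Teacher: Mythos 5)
Your proof is correct, and it takes a genuinely different route from the paper's. The paper argues by contradiction: assuming $\max_\Omega \xi_{\ep_n}\to 0$ along a sequence, it first deduces $\|\xi_{\ep_n}\|_{H^1(\Omega)}\to 0$ by testing \eqref{pde xi} against $\xi_{\ep_n}$, and then passes to the limit in the weak formulation tested against an arbitrary $v\in H_0^1(\Omega)$ to reach the absurd conclusion $\int_\Omega v=0$ for all such $v$. You instead exploit that $\xfield$ is the minimizer of the convex functional $\mathcal F_\ep$ and compare it with a single $\ep$-independent competitor $\eta$ (the solution of the constant-coefficient problem $-\tfrac1b\Delta\eta+\eta=1$), using $\pmin^{-2}\le b^{-1}$ in the only place where it helps, namely on the competitor's gradient term; your bookkeeping of the identities $\mathcal F_\ep(\xfield)=-\tfrac12\int_\Omega\xfield$ and $\mathcal F_\ep(\eta)\le-\tfrac12\int_\Omega\eta$ is accurate, and the positivity $\int_\Omega\eta>0$ is easily justified (e.g.\ testing $\eta$'s equation against $\eta$ and noting $\eta\equiv 0$ is not a solution, so no maximum principle is even needed). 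The payoff of your approach is that it is quantitative: it yields the explicit, $\ep$-independent bound $\max_\Omega\xfield\ge\frac{1}{|\Omega|}\int_\Omega\eta$, a constant depending only on $\Omega$ and $b$, valid for every $\ep$ rather than just in the $\liminf$; the paper's compactness-free contradiction argument is softer but purely qualitative. Both proofs use only the uniform ellipticity $b\le\pmin^2\le 1$.
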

        \begin{proof}
            Let us assume towards a contradiction that there exists a sequence $\{\ep_n\}_{n\in \N}$ such that
            $$
            \max_\Omega \xi_{\ep_n}\to 0\quad \mathrm{as}\ n\to \infty.
            $$
            By testing \eqref{pde xi} by $\xi_{\ep_n}$ and integrating by parts, we find
            $$
            \int_\Omega \frac{|\nabla \xi_{\ep_n}|^2}{\rho_{\ep_n}^2}+\int_{\Omega}\xi_{\ep_n}^2=\int_{\Omega}\xi_{\ep_n}.
            $$
            Since $\rho_{\ep_n}^2\leq 1$, we deduce that
            $$
            \|\xi_{\ep_n}\|^2_{H^1(\Omega)}\leq \int_{\Omega}\xi_{\ep_n}\leq |\Omega|\max_\Omega \xi_{\ep_n}.
            $$
            Thus, $\|\xi_{\ep_n}\|^2_{H^1(\Omega)}\to 0$ as $n\to \infty$. On the other hand, by testing \eqref{pde xi} by $v\in H_0^1(\Omega)$ and integrating by parts, we find 
            \begin{equation}\label{passlim}
            \int_\Omega \frac{\nabla \xi_{\ep_n}\cdot \nabla v}{\rho_{\ep_n}^2}+\int_{\Omega}\xi_{\ep_n}v=\int_{\Omega}v.
            \end{equation}
            Using $b\leq \rho_\ep^2$ and the Cauchy--Scharwz inequality, we find
            $$
            \left| \int_\Omega \frac{\nabla \xi_{\ep_n}\cdot \nabla v}{\rho_{\ep_n}^2}\right |\leq b^{-1} \left|\int_\Omega \nabla \xi_{\ep_n}\cdot \nabla v\right|\leq b^{-1}\|\nabla \xi_{\ep_n}\|_{L^2(\Omega)}\|\nabla v\|_{L^2(\Omega)}\to 0\quad \mathrm{as}\ n\to \infty.
            $$
            Similarly,
            $$
            \left|\int_\Omega \xi_{\ep_n}v \right|\leq \|\xi_{\ep_n}\|_{L^2(\Omega)}\|v\|_{L^2(\Omega)}\to 0\quad \mathrm{as}\ n\to \infty.
            $$
            Hence, passing to the limit $n\to\infty$ in \eqref{passlim}, we find $\int_\Omega v=0$ for any $v\in H_0^1(\Omega)$, which is a contradiction.
        \end{proof}
        \begin{remark}\label{remarkmaxpsi2}
            This result immediately yields $\liminf_{\ep\to 0} \psi_\ep>0$. Moreover, since $\xi_\ep=0$ on $\partial\Omega$, we have $\psi_\ep=0$ on $\partial\Omega$. We then deduce that there exists $d>0$, independently of $\ep$, such that $\mathrm{dist}\left(\mathrm{argmax}_\Omega(\psi_\ep),\partial\Omega\right)>d$ for any $\ep>0$.
            \end{remark}
        \subsection{Proof of Proposition \ref{prop-splitting}}
        We now are ready to provide a proof for our energy-splitting.       
        \begin{proof}
            From \eqref{LM eq}, we have 
            \begin{align}\label{eq0}
                GL_\varepsilon(\mathbf{u,A}) &= E_\varepsilon(\pmin u, \mathbf{A}) + \frac{1}{2} \int_\Omega |\operatorname{curl} \mathbf{A} - h_{\ex}|^2\\
                &= E_\varepsilon(\pmin) + \frac{1}{2}\int_\Omega \pmin^2|\nabla_\mathbf{A} u|^2 + \pmin^4 \frac{(1-|u|^2)^2}{2\varepsilon^2} + |\operatorname{curl} \mathbf{A} - h_{\ex}|^2\notag.
            \end{align}
            By expanding the square $|\nabla_\mathbf{A} u|^2$ and integrating by parts (recall from \eqref{pde xi} that $\xfield=0$ on $\partial \Omega$), we find
            \begin{align}\label{eq1}
                \int_\Omega \pmin^2 |\nabla_\mathbf{A} u|^2 &= \int_\Omega \pmin^2 \left|\nabla_{A} u + ih_{\ex}\frac{\nabla^\perp \xfield}{\pmin^2} u\right|^2 \\
                &= \int_\Omega \pmin^2\left(|\nabla_{A} u|^2 + h_{\ex}^2 \frac{|\nabla \xfield|^2}{\pmin^4}|u|^2 + 2\frac{h_{\ex}}{\pmin^2} \ip{\nabla_{A} u}{iu} \cdot \nabla^\perp \xfield \right) \notag\\
                &= \int_\Omega \pmin^2 |\nabla_{A} u|^2 + h_{\ex}^2 \frac{|\nabla \xfield|^2}{\pmin^2}|u|^2 - 2h_{\ex} \curl(\ip{iu}{\nabla_A u}) \xfield.\notag
            \end{align}       
            We now expand the square $|\operatorname{curl}\mathbf{A} - h_{\ex}|^2$, which yields
            \begin{align}\label{eq2}
                \int_\Omega |\operatorname{curl}\mathbf{A} - h_{\ex}|^2 &= \int_\Omega \left|\curl A + h_\ex \curl A_\ep^0- h_{\ex}\right|^2\\
                &= \int_\Omega \left|\curl A + h_\ex h_\ep^0 - h_{\ex}\right|^2\notag\\
                &=\int_\Omega |\curl A + h_\ex(1-\xfield) -h_\ex|^2\notag\\
                &= \int_\Omega |\curl A - h_\ex\xfield|^2\notag\\
                &= \int_\Omega |\operatorname{curl}A|^2 + h_{\ex}^2 |\xfield|^2 - 2h_{\ex} \xfield \operatorname{curl}A\notag. 
            \end{align}            
            Inserting \eqref{eq1} and \eqref{eq2} into \eqref{eq0}, we deduce that
            \begin{equation}\label{partial split}
                GL_\ep(\u,\A) = E_\ep(\pmin) + \fen(u,A) - h_\ex \int_\Omega \mu(u,A) \xfield + \frac{h_\ex^2}{2}\left( \frac{|\nabla \xfield|^2}{\pmin^2} |u|^2 + |\xfield|^2 \right).
            \end{equation}
            
            Let us now write $GL_\ep(\pmin, h_{\ex}A_\ep^0)$ in terms of the energies of $\pmin$ and $\xfield$. We have
            \begin{align}
                GL_\varepsilon(\pmin,h_{\ex} A^0_\ep) &= E_\ep(\pmin,h_\ex A^0_\ep) + \frac{h_{\ex}^2}{2}\int_\Omega |\curl A^0_\ep - 1|^2 \notag\\
                &\stackrel{\eqref{LM eq}}{=} E_\ep(\pmin) + \frac{h_\ex^2}{2} \int_\Omega \pmin^2 |A^0_\ep|^2 +  |h_\ep^0-1|^2 \notag\\
                &= E_\ep(\pmin) + \frac{h_\ex^2}{2} \int_\Omega \pmin^2 \frac{|\nabla \xfield|^2}{\pmin^4} + |\xfield|^2=E_\ep(\pmin) + \frac{h_\ex^2}{2}\int_\Omega \frac{|\nabla \xfield|^2}{\pmin^2} +|\xfield|^2.\label{meissner energy}
            \end{align}
            Therefore, by writing $|u|^2$ as $1 + (|u|^2 - 1)$, we have
            \begin{align*}
                \frac{h_\ex^2}{2} \int_\Omega \left( \frac{|\nabla \xfield|^2}{\pmin^2} |u|^2 + |\xfield|^2 \right) &= \frac{h_\ex^2}{2} \int_\Omega \left(\frac{|\nabla \xfield|^2}{\pmin^2} + |\xfield|^2\right) + \frac{h_\ex^2}{2}  \int_\Omega \frac{|\nabla \xfield|^2}{\pmin^2} (|u|^2-1) \\
                &\stackrel{\eqref{meissner energy}\&\eqref{defR0}}{=} GL_\ep(\pmin, h_\ex A_\ep^0) - E_\ep(\pmin) + R_0.
            \end{align*}
            By inserting this into \eqref{partial split}, we obtain \eqref{Split}.
        \end{proof}

        \begin{remark}
            Since $\pmin \geq \sqrt{b}$ and $\norm{\xfield}{H^1(\Omega)} \leq C$ for some $C>0$ independent of $\ep$ (recall \eqref{xi h1 estimate}), from \eqref{meissner energy} we deduce that
            \begin{equation}\label{meissner energy bound}
                GL_\ep(\pmin, h_\ex A_\ep^0) \leq E_\ep(\pmin) + Ch_{\ex}^2.
            \end{equation}
            On the other hand, by using the Cauchy--Schwarz inequality, \eqref{xi lipschitz estimate}, and $\pmin \geq \sqrt{b}$, we deduce that %
            \begin{equation}\label{boundR0}|R_0| \leq Ch_\ex^2 \norm{|u|^2-1}{L^2(\Omega)} \leq Ch_\ex^2 \ep \fen(u,A)^\frac{1}{2}.\end{equation}
            This in particular means that $R_0 = o(1)$ under adequate upper bounds on $h_\ex$ and $\fen(u,A)$.
        \end{remark} 
   
\section{First critical field}\label{first cf}
\subsection{Heuristic derivation of \texorpdfstring{$\critfield$}{the first critical field}}\label{heuristicHc1}
Since the Meissner configuration is a good approximation of the global minimizer among vortexless configurations (as we shall see in the next section), we expect the occurrence of vortices in global minimizers $(\u,\A)$ essentially when $GL_\ep(\u,\A)< GL_\ep(\pmin,h_\ex A_\ep^0)$. By our splitting result \eqref{Split}, we know that this is equivalent to finding values of $h_\ex$ such that
$$
\fen(u,A) - h_\ex \int_\Omega \mu(u,A) \xfield +R_0 < 0.
$$
Using the ball construction method given by Proposition \ref{ball lower bound} to estimate $\fen(u,A)$ and the vorticity estimate \eqref{jacobian estimate} to approximate $\mu(u,A)$ by a sum of Dirac masses, after neglecting lower order terms, we find that this is possible if
$$
h_{\ex} > \frac{|\log \ep|}{2 \max_\Omega \frac{\xfield}{\pmin^2}} = \frac{|\log \ep|}{2 \max_\Omega \psi_\ep}=H_{c_1}^\ep.
$$

\subsection{Proof of Theorem \ref{crit field lower bound}}
    \begin{proof}
    Note that all the results in this theorem are gauge-invariant. Therefore, we may assume without loss of generality that $(\u,\A)$ is in the Coulomb gauge, that is, $\A$ satisfies \eqref{coulomb gauge}. Also, in this proof, $C>0$ denotes a constant independent of $\ep$ that might change from line to line.
    \begin{enumerate}[label=\textsc{\bf Step \arabic*},leftmargin=0pt,labelsep=*,itemindent=*,itemsep=10pt,topsep=10pt]
    \item \emph{(Proving that $\fen(u,A) \leq Ch_{\ex}^2$ for some $C>0$ independent of $\ep$).} 
    Since $(\u,\A)$ is a global minimizer, we have $GL_\ep(\u,\A) \leq GL_\ep(\pmin, h_\ex A_\ep^0)$. By integrating by parts the third term in the RHS of \eqref{Split} (recall that $\xi_\ep=0$ on $\partial\Omega$) and inserting the previous inequality, we deduce that
    \begin{align*}
        \fen(u,A) &= GL_\ep(\u,\A) - E_\ep(\pmin) + h_\ex \int_\Omega \mu(u,A) \xfield -R_0\\
        &\leq GL_\ep(\pmin, h_\ex A^0_\ep) - E_\ep(\pmin) + h_\ex \int_\Omega (\ip{iu}{\nabla_A u} + A) \cdot \nabla^\perp \xfield +|R_0|.
    \end{align*}
    By inserting \eqref{meissner energy bound} and using the Cauchy-Schwarz inequality, we obtain
    \begin{multline}\label{ineq0}
    \fen(u,A)\leq  Ch_\ex^2 + h_\ex\norm{u}{L^2(\Omega)}\norm{\nabla_A u}{L^2(\Omega)} \norm{\nabla \xfield}{L^\infty(\Omega)}
        \\+h_\ex\norm{A}{L^2(\Omega)}\norm{\nabla \xfield}{L^2(\Omega)}+|R_0|.
    \end{multline}
    Since $(\u,\A)$ solves $\eqref{euler lagrange gl}$, we have \eqref{maxprinc|u|}. Combining this with $\norm{\nabla_A u}{L^2(\Omega)} \leq \fen(u,A)^{\frac{1}{2}}$ and \eqref{xi lipschitz estimate}, yields that
    \begin{equation}\label{ineq1}
    \norm{u}{L^2(\Omega)}\norm{\nabla_A u}{L^2(\Omega)} \norm{\nabla \xfield}{L^\infty(\Omega)}\leq C\fen(u,A)^{\frac{1}{2}}.
    \end{equation}
    Moreover, since both $\A$ and $A^0_\ep$ are in the Coulomb gauge, we deduce that $A$ also satisfies \eqref{coulomb gauge}. Hence, using \eqref{coulomb h1 estimate}, we get that
    $$
    \norm{A}{L^2(\Omega)} \leq \norm{A}{H^1(\Omega)} \leq C \norm{\curl A}{L^2(\Omega)} \leq C \fen(u,A)^{\frac{1}{2}},
    $$
    which combined with \eqref{xi h1 estimate} yields
    \begin{equation}\label{ineq2}
    \norm{A}{L^2(\Omega)} \norm{\nabla \xfield}{L^2(\Omega)} \leq C \fen(u,A)^{\frac{1}{2}}.
    \end{equation}
    Finally, by combining \eqref{ineq0} with \eqref{ineq1}, \eqref{ineq2}, and \eqref{boundR0}, we obtain
    \begin{align*}
        \fen(u,A) &\leq C\left(h_\ex^2 + h_\ex \fen(u,A)^\frac{1}{2}+h_\ex^2\ep \fen(u,A)^\frac12\right)\\
                  &\leq  C\left(h_\ex^2 + h_\ex \fen(u,A)^\frac{1}{2}\right).
    \end{align*}
    It follows that
    \begin{equation}\label{global minimizing energy bound}
        \fen(u,A) \leq Ch_\ex^2.
    \end{equation}
    
    \item \emph{(Estimates for $\fen(u,A)$ and $\norm{\mu(u,A)}{\left(C^{0,1}_0\right)^*}$. Proof of item (2)).}
    From \eqref{global minimizing energy bound} and $h_\ex = O(|\log \ep|)$, we have
    \begin{equation}\label{global minimizing energy bound2}
        \fen(u,A) \leq C|\log \ep|^2.
    \end{equation}
    We can therefore apply Proposition \ref{ball lower bound}, to obtain a finite collection of disjoint balls $\{B_i\}_i=\{B(a_i,r_i)\}_i$ with $\sum_i r_i \leq r= |\log \varepsilon|^{-\beta}$, where $\beta>0$ will be chosen later, containing $\left\{||u|-1| \geq \frac{1}{2} \right\}$ such that
\begin{align}
    \fen(u,A) &\geq \pi  \sum_{i} \pmin^2( \underline{a_i})|d_{B_i}| \left( \log \frac{|\log \varepsilon|^{-\beta}}{\tilde D \varepsilon} - C \right)\nonumber \\
    &= \pi  \sum_{i} \pmin^2( \underline{a_i})|d_{B_i}| (|\log \varepsilon| - \beta\log|\log\varepsilon| - \log \tilde D  - C) \nonumber\\
    &\stackrel{\eqref{degree bound}}{\geq}  \pi  \sum_{i} \pmin^2( \underline{a_i})|d_{B_i}| (|\log \varepsilon| - \beta\log|\log\varepsilon| - C\log \frac{\fen(u,A)}{|\log \ep|}  - C) \nonumber\\
    &\stackrel{\eqref{global minimizing energy bound2}}{\geq} \pi  \sum_{i} \pmin^2( \underline{a_i})|d_{B_i}| (|\log \varepsilon| - C\log|\log\varepsilon|)\label{free energy bound fcf},
\end{align}
where $\underline{a_i} \in B_i$ is such that $\pmin^2(\underline{a_i}) = \min_{B_i} \pmin^2$.

On the other hand, applying Proposition \ref{prop:vorticityestimate}, we have
\begin{align*}
    \left|h_{\ex}\int_\Omega \mu(u,A) \xfield \right|&\stackrel{\eqref{jacobian estimate}}{\leq} 2\pi h_{\ex} \sum_i |d_i| \xi_\varepsilon(a_i) + C h_\ex r(1+\fen(u,A))\norm{\nabla \xfield}{L^{\infty}(\Omega)} \\
    &\stackrel{\eqref{xi lipschitz estimate} \& \eqref{global minimizing energy bound2}}{\leq} 2\pi h_{\ex} \sum_i |d_i| \xi_\varepsilon(a_i) + O(|\log \varepsilon|^{3-\beta})
\end{align*}
It also follows from \eqref{xi lipschitz estimate} that
$$|\xfield(a_i) - \xfield(\underline{a_i})| \leq \norm{\nabla \xfield}{L^\infty(\Omega)}|a_i - \underline{a_i}| \leq Cr_i \leq C|\log \ep|^{-\beta} .$$
Therefore, we have
\begin{align*}
    \left| h_{\ex}\int_\Omega \mu(u,A) \xfield \right|  &\leq 2\pi h_{\ex} \sum_{i}|d_i|\xfield(\underline{a_i}) + C|\log \ep|^{-\beta} h_\ex \sum_i |d_i| + O(|\log \varepsilon|^{3-\beta})\\
    &\stackrel{\eqref{degree bound}}{\leq} 2\pi h_{\ex} \sum_{i}|d_i|\xfield(\underline{a_i}) + C |\log \ep|^{-\beta} h_\ex \frac{\fen(u,A)}{|\log \ep|} + O(|\log \varepsilon|^{3-\beta})\\
    &\stackrel{\eqref{global minimizing energy bound2}}{\leq} 2\pi h_{\ex} \sum_{i}|d_i|\xfield(\underline{a_i})+ O(|\log \varepsilon|^{3-\beta}).
\end{align*}
Thus, by choosing $\beta > 3$, we get
\begin{equation}\label{vorticity convergence}
   \left| h_{\ex} \int_\Omega \mu(u,A) \xfield \right| \leq 2\pi h_{\ex} \sum_i |d_i| \xfield(a_i) + o(1).
\end{equation}
Combining \eqref{free energy bound fcf} and \eqref{vorticity convergence}, we deduce that
\begin{multline}\label{f-mu lower bound}
    \fen(u,A)-h_{\ex}\int_\Omega \mu(u,A) \xfield \geq \\ \pi \sum_{i}\pmin^2(\underline{a_i})|d_i|\left(|\log \varepsilon| - C \log|\log \varepsilon|
    -2h_{\ex}\psi_\varepsilon(\underline{a_i})\right)+o(1).
\end{multline}
Therefore, since $h_\ex \leq \critfield - K_0 \log |\log \ep|$, we have
\begin{align*}
    |\log \ep| - C \log |\log \ep| - 2h_\ex \psi_\ep(\underline{a_i})
    &\geq |\log \ep| - C \log |\log \ep| - 2h_\ex \max_\Omega \psi_\ep\\
    &\geq \log |\log \ep|\left(2\max_\Omega \psi_\ep K_0 - C\right).
    \end{align*}
Remark \ref{remarkmaxpsi} (or Proposition \ref{prop:liminf}) then allows us to choose $K_0>0$, independently of $\ep$, so that
$$
2\max_\Omega\psi_\ep K_0-C=1.
$$
Inserting this into \eqref{f-mu lower bound}, we find
\begin{equation}\label{f-mu lower bound2}
    \fen(u,A)-h_{\ex}\int_\Omega \mu(u,A) \xfield \geq  \pi \sum_{i}\pmin^2(\underline{a_i})|d_i|\log |\log\ep| +o(1).
\end{equation}

Moreover, since $GL_{\varepsilon}(\mathbf{u,A}) \leq GL_\ep(\pmin, h_\ex A^0_\ep)$, it follows from \eqref{Split} that
$$\fen(u,A) - h_{\ex}\int_\Omega \mu(u,A) \xfield + R_0 \leq 0.$$
In addition,
\begin{equation}\label{o1 r0}
    |R_0| \leq C h_{\ex}^2 \varepsilon \fen(u,A)^{\frac{1}{2}} \stackrel{\eqref{global minimizing energy bound2}}{\leq} C\ep |\log \ep|^3 = o(1).
\end{equation}
Hence,
\begin{equation}\label{free energy vorticity bound}
    \fen(u,A) - h_{\ex}\int_\Omega \mu(u,A) \xfield \leq o(1).
\end{equation}
By combining \eqref{f-mu lower bound2} and \eqref{free energy vorticity bound}, using also $\rho_\ep^2\geq b$, we deduce that $\sum_i |d_i|=0$ and thus $d_i=0$ for all $i$. In turn, from \eqref{jacobian estimate} it follows that
\begin{equation}\label{vortexless vorticity}
    h_\ex\norm{\mu(u,A)}{\left(C^{0,1}_0(\Omega)\right)^*} \leq Ch_\ex r(1+\fen(u,A)) \leq C|\log \ep|^{3-\beta} = o(1).
\end{equation}
Therefore, item (2) is satisfied.

\item \emph{(Clearing out. Proof of items (1) and (3)).}
Since $(\u,\A)$ is in the Coulomb gauge, we have 
\begin{align*}
    \norm{A}{L^\infty(\Omega)} &\leq C(E_\ep(\pmin) + \fen(u,A))^\frac{1}{2}\\
    &\stackrel{\eqref{global minimizing energy bound2}}{\leq} C\left( \frac{1}{\ep^2} + |\log \ep|^2 \right)^{\frac{1}{2}}\\
    &\leq \frac{C}{\ep}.
\end{align*}
Then, it follows from \eqref{coulomb grad estimate} that
\begin{align*}
    \norm{\nabla |u|}{L^\infty(\Omega)} \leq \norm{\nabla u}{L^\infty(\Omega)} \leq \frac{C}{\ep}.
\end{align*}
On the other hand, by combining \eqref{free energy vorticity bound} with \eqref{vortexless vorticity}, we find
\begin{equation}\label{o1 vorticity}
    \fen(u,A) \leq h_\ex \int_\Omega \mu(u,A) \xfield +o(1)\stackrel{\eqref{xi lipschitz estimate}\&\eqref{vortexless vorticity}}{=} o(1).
\end{equation}
Hence, Proposition \ref{clearing out} yields that item (1) holds. 

Finally, we have
\begin{align*}
    GL_\ep(\u,\A) &= GL_\ep(\pmin, h_\ex A^0_\ep) + \fen(u,A) - h_\ex \int_\Omega \mu(u,A)\xfield + R_0\\
    &\stackrel{ \eqref{o1 r0} \&\eqref{vortexless vorticity}\& \eqref{o1 vorticity} }{=} GL_\ep(\pmin, h_\ex A^0_\ep) + o(1)
\end{align*}
This finishes the proof of item (3). 
\end{enumerate}
\end{proof} 

\subsection{Proof of Theorem \ref{crit field upper bound}}

\begin{proof}
    In this proof, we will construct a configuration of the form $(\mathbf{u,A}) = (\pmin u, 0+ h_{\ex} A^0_\ep)$, with a vortex of degree $1$ centered at $x^0_\ep \in \Omega$, where $x^0_\ep$ is such that 
    \begin{equation}\label{x0 is max}
        \psi_\ep(x^0_\ep) = \max_\Omega \psi_\ep.
    \end{equation}We will prove that the energy of such a configuration is much lower than the energy of the Meissner configuration, which in turn guaranties that global minimizers of \eqref{GLenergy} in this regime have vortices.
    \begin{enumerate}[label=\textsc{\bf Step \arabic*},leftmargin=0pt,labelsep=*,itemindent=*,itemsep=10pt,topsep=10pt]
    \item \emph{(Constructing the configuration).} Let $\Phi$ be a multiple of the fundamental solution of the Laplace's equation centered at $x_\ep^0$, that is,
    $$\Phi(x) = \log \frac{1}{|x-x^0_\ep|}.$$
    We begin by constructing a phase $\varphi$ in $\Omega \setminus \{x^0_\ep\}$ as follows. Let $\Theta$ be the phase of
    $$\frac{z-x^0_\ep}{|z-x^0_\ep|}.$$ 
    Since 
    $$
    -\Delta \Phi = 2\pi \delta_{x^0_\ep} = \curl \nabla \Theta\quad \mathrm{in}\ \Omega,
    $$
    we have that, in the sense of distributions,
    $$
    \curl(-\nabla^\perp \Phi - \nabla \Theta)= 0 \quad \mathrm{in}\ \Omega.
    $$
    Therefore, there exists $g$ such that $\nabla g = -\nabla^\perp \Phi - \nabla \Theta$. We let $\varphi = \Theta + g$. Observe that $\varphi$ is well defined modulo $2\pi$ in $\Omega \setminus \{x^0_\ep\}$ and satisfies the following relation
    \begin{equation}\label{phase grad equality}
        \nabla \varphi = -\nabla^\perp \Phi.
    \end{equation}

    Let $r_\ep = |\log \ep|^{-M}$, where $M > 0$ will be chosen later on, and consider the ball $B_\ep = B(x^0_\ep, r_\ep) \subset \Omega$. Notice that this condition holds for any $\ep$ sufficiently small in view of Remark \ref{remarkmaxpsi2}.
    
    We can now define $u$. For $x\in \Omega \setminus B_\ep$, we let $u(x) = e^{i\varphi(x)}$ and, for $x \in B_\ep$, we define
    $$u(x) = \frac{1}{f(R_\ep)} f\left( \frac{|x-x^0_\ep|}{\varepsilon} \right)e^{i\varphi(x)},$$
    where $R_\ep$ is such that $r_\ep=\ep R_\ep$ and $f \colon \R_+ \to \R_+$ is a function such that $f(0)=0$, $f(r) \to 1$ as $r \to \infty$ and satisfies the following asymptotic estimate 
    \begin{equation}\label{R-asympt}
        \frac{1}{2} \int_0^{R} \left(f'(r)^2 + \frac{f(r)^2}{r^2} + \frac{(1-f(r)^2)^2}{2} \right) 2\pi rdr = \pi \log R + O(1)\quad \mathrm{as}\ R\to\infty.
    \end{equation}
    The function $f$ is the modulus of what is referred to as \emph{the degree-one radial solution} \cite{libro-ss}*{Definition 3.6}, and its existence and properties are given by \cite{libro-ss}*{Proposition 3.11}.

    \item \emph{(Estimating the energy inside $B_\ep$).}
    Let $k_{\varepsilon} = \sup_{x \in B_\ep} |\pmin^2(x)-\pmin^2(x_\ep^0)|$. Using that $\rho_\ep^2\leq 1$, We have
    \begin{align*}
        F_{\varepsilon, \rho_\varepsilon, B_\ep}(u,0)=\frac{1}{2}\int_{B_\ep} \pmin^2|\nabla u|^2 + \pmin^4 \frac{(1-|u|^2)^2}{2\varepsilon^2} &\leq \frac{1}{2}\int_{B_\ep} \pmin^2\left(|\nabla u|^2 + \frac{(1-|u|^2)^2}{2\varepsilon^2} \right) \\
        & \leq \frac12(\pmin^2(x^0_\ep) + k_\ep) \int_{B_\ep} |\nabla u|^2 + \frac{(1-|u|^2)^2}{2\varepsilon^2}.
    \end{align*}
    We now estimate the integral that appears in the RHS of the last inequality. Since $|\nabla u|^2 = |\nabla |u||^2 + |u|^2|\nabla \varphi|^2$, it follows by letting $r = \frac{|x-x_\ep^0|}{\varepsilon}$ and performing a direct calculation that
    \begin{align*}
        \frac{1}{2}\int_{B_\ep} |\nabla u|^2 + \frac{(1-|u|^2)^2}{2\varepsilon^2} = \frac{1}{2} \int_{B_\ep}\left( \frac{f'(r)^2}{\varepsilon^2 f(R)^2} + \frac{f(r)^2}{f(R)^2}|\nabla \Phi(x)|^2 + \frac{1}{2\varepsilon^2}\left(1-\frac{f(r)^2}{f(R)^2} \right)^2 \right) dx.
    \end{align*}
    Note that $|\nabla \Phi(x)| = \frac{1}{|x-x^0_\ep|} = \frac{1}{\varepsilon r}$. By changing the variable of integration to $r$, we obtain
    \begin{align*}
        \frac{1}{2}\int_{B_\ep} |\nabla u|^2 + \frac{(1-|u|^2)^2}{2\varepsilon^2} = \frac{1}{2} \int_0^{R_\ep} \left(\frac{f'(r)^2}{f(R)^2} + \frac{f(r)^2}{f(R)^2} \frac{1}{r^2} + \frac{1}{2}\left(1-\frac{f(r)^2}{f(R)^2} \right)^2 \right) 2\pi r dr.
    \end{align*}
    Since $R_\ep \to \infty$ as $\ep \to 0$, we have $f(R_\ep) \to 1$ as $\ep\to 0$. Therefore, it follows from \eqref{R-asympt} that
    \begin{align*}
        \frac{1}{2}\int_{B_\ep} \pmin^2|\nabla u|^2 + \pmin^4 \frac{(1-|u|^2)^2}{2\varepsilon^2} &\leq (\pmin^2(x^0_\ep)+k_\ep)(\pi \log R_\ep + O(1))\\
        &=(\pmin^2(x^0_\ep)+k_\ep)(\pi \log r_\ep - \pi \log \ep + O(1))\\
        &=(\pmin^2(x^0_\ep)+k_\ep)(\pi |\log \ep| - \pi M \log |\log \ep| ).
    \end{align*}
    From the hypothesis on $\pmin$ we have $k_\ep \leq [\pmin^2]_{C^{0,\alpha}(\Omega)} r_\ep^\alpha \leq |\log \ep|^{m-\alpha M}$. Therefore, by choosing a sufficiently large $M$, we have
    \begin{equation}\label{energy inside B}
        \frac{1}{2}\int_{B_\ep} \pmin^2|\nabla u|^2 + \pmin^4 \frac{(1-|u|^2)^2}{2\varepsilon} \leq \pmin^2(x^0_\ep)\left(\pi |\log \ep| - \pi M \log |\log \ep|\right).
    \end{equation}

    \item \emph{(Estimating the energy outside $B_\ep$).} Let $C(\Omega)=\mathrm{diam}(\Omega)$. Since $|u|=1$ outside $B_\ep$, we have $\nabla|u|=0$ and thus
    \begin{align*}
        \int_{\Omega \setminus B_\ep} |\nabla u|^2 &= \int_{\Omega \setminus B_\ep}|\nabla |u||^2 + |u|^2|\nabla \varphi|^2\\
        &\stackrel{\eqref{phase grad equality}}{=} \int_{\Omega \setminus B_\ep} |\nabla \Phi|^2.
    \end{align*}
    Therefore, using once again that $\rho_\ep^2\leq 1$, we have
    \begin{align*}
        F_{\varepsilon, \rho_\varepsilon, \Omega\setminus B_\ep}(u,0)=\frac{1}{2}\int_{\Omega \setminus B_\ep} \pmin^2|\nabla u|^2 + \pmin^4 \frac{(1-|u|^2)^2}{2\varepsilon^2} &\leq \frac{1}{2}\int_{\Omega \setminus B_\ep} |\nabla \Phi|^2 \\
        &= \frac{1}{2}\int_{\Omega \setminus B_\ep} \frac{1}{|x-x^0_\ep|^2} dx\\
        &\leq \frac{1}{2}\int_{r_\varepsilon}^{C(\Omega)}\frac{1}{r^2} 2\pi r dr\\
        &= -\pi \log r_\varepsilon + O(1) \\
        &= \pi M \log |\log \varepsilon| + O(1).
    \end{align*}
    
    Hence, by combining the estimates obtained in {\bf Step 2} and {\bf Step 3}, we obtain the following upper bound for the free energy
    \begin{equation}\label{free energy upper bound}
        \fen(u,0) \leq \pi\left(\pmin^2(x^0_\ep)|\log \ep| + (1-\pmin^2(x^0_\ep))M \log|\log \ep|\right) + O(1).
    \end{equation}

    \item \emph{(Computation of the full Ginzburg--Landau energy of the constructed configuration).} Consider the configuration $(\mathbf{u,A}) = (\pmin u,0 -h_{\ex}A^0_\ep)$. We split $GL_\varepsilon(\mathbf{u,A})$ using \eqref{Split}, to obtain
    \begin{align}\label{splitupperbound}
        GL_\varepsilon(\mathbf{u,A}) - GL_\ep(\pmin, h_\ex A^0_\ep) &=  \fen(u,0) -h_{\ex}\int_\Omega \mu(u,0) \xfield + R_0 \notag\\
        &\stackrel{\eqref{free energy upper bound}}{\leq} \pi \pmin^2(x^0_\ep) |\log \varepsilon| + \left(1-\pmin^2(x^0_\ep)\right)\pi M \log|\log \varepsilon| \notag\\
        &\hspace*{5cm}- h_{\ex}\int_\Omega \mu(u,0) \xfield + R_0.
    \end{align}
    Here is where the hypothesis on $h_\ex$
    \begin{equation}\label{hex upper bound}
        \critfield + K^0 \log |\log \ep| \leq h_\ex \leq |\log \ep|^N
    \end{equation} plays its role. First, we have 
    \begin{equation}\label{boundR0upper}
    |R_0| \stackrel{\eqref{boundR0}}{\leq} Ch_\ex^2 \ep \fen(u,0)^\frac{1}{2} \stackrel{\eqref{free energy upper bound} \& \eqref{hex upper bound}}{\leq} C \ep |\log \ep|^{\frac12+2N} = o(1).
    \end{equation}
    On the other hand, since $|u|=1$ in $\Omega \setminus B_\ep$, from \eqref{jacobian estimate} it follows that (recall $r_\ep=|\log\ep|^M)$
    \begin{align*}
        \int_\Omega \mu(u,0) \xfield &\geq 2\pi \xfield(x^0_\ep) - Cr_\varepsilon \fen(u,0)\norm{\nabla \xfield}{L^\infty(\Omega)} \\
        &\stackrel{\eqref{free energy upper bound}}{\geq} 2\pi \xfield(x^0_\ep) -C|\log \varepsilon|^{-M} |\log \varepsilon|.
    \end{align*}
    Therefore, we have 
    $$h_\ex \int_\Omega \mu(u,0) \xfield \geq 2\pi h_\ex \xfield(x^0_\ep) -C|\log \ep|^{N-M+1} .$$
    By choosing a larger $M$ if necessary, we get
    \begin{equation}\label{hex mu lower bound}
        h_\ex \int_\Omega \mu(u,0) \xfield \geq 2\pi h_\ex \xfield(x^0_\ep) + o(1).
    \end{equation}
    Finally, by combining \eqref{splitupperbound}, \eqref{boundR0upper}, \eqref{hex mu lower bound}, and $-\rho_\ep^2(x_\ep^0)\leq -b$, we are led to
    \begin{align*}
        GL_\ep(\u,\A)&-GL_\ep(\pmin, h_\ex A^0_\ep) \\
        &\leq \pi\left(\pmin^2(x^0_\ep) |\log \ep| +(1-\pmin^2(x^0_\ep)) M \log|\log \ep| -2h_\ex \xfield(x_\ep^0) \right)+o(1)
        \\&\leq \pi |\log \ep|\left(\pmin^2(x^0_\ep) - \frac{\xfield(x^0_\ep)}{\max_\Omega \frac{\xfield}{\pmin^2}}\right)+ \pi \log |\log \ep|\left((1-b)M - 2K^0\xfield(x^0_\ep) \right)+o(1).
    \end{align*}
    Since $\psi_\ep = \frac{\xfield}{\pmin^2}$ achieves its maximum at $x^0_\ep$, the term of order $|\log \ep|$ in the RHS of the last inequality is equal to 0. Therefore, since $\liminf_{\ep \to 0} \max_\Omega \xfield > 0$ (see Remark \ref{remarkmaxpsi} or Proposition \ref{prop:liminf}), we may choose $K_0$, independently of $\ep$, such that we have
    \begin{equation}\label{vortex energy difference}
        GL_\varepsilon(\mathbf{u,A}) - GL_\ep(\pmin, h_\ex A^0_\ep) < -\log |\log \ep|.
    \end{equation}

   \item \emph{(Conclusion).} 
   Let $\left(\u_0, \A_0\right) = (\pmin u_0, A_0 + h_\ex A^0_\ep)$ be a vortexless configuration, that is, $|u_0| > c$ for some $c>0$ independent of $\ep$, such that $GL_\ep(\u_0, \A_0) \leq GL_\ep(\pmin, h_\ex A^0_\ep)$. We split its Ginzburg--Landau energy with \eqref{Split} to obtain
   $$0 > GL_{\ep}(\u_0,\A_0) - GL_\ep(\pmin,h_\ex A^0_\ep) = \fen(u_0,A_0) -h_\ex \int_\Omega \mu(u_0,A_0) \xfield + R_0.$$
   By integration by parts, we have (recall $\xi_\ep=0$ on $\partial \Omega$)
   $$\int_\Omega \mu(u_0, A_0) \xfield = \int_\Omega (\ip{iu_0}{\nabla_{A_0} u_0} + A_0) \cdot \nabla^\perp \xfield.$$
   Since $|u_0|>c$, we can write $u_0 = |u_0|e^{i \varphi_0}$. A direct calculation shows that
   $$\ip{iu_0}{\nabla_{A_0} u_0} + A_0 = (1-|u_0|^2)(\nabla \varphi_0- A_0) + \nabla \varphi_0.$$
   Integration by parts then yields
   $$\int_\Omega \nabla \varphi_0 \cdot \nabla^\perp \xfield = - \int_\Omega \xfield \curl \nabla \varphi_0 = 0.$$
   Hence, from the Cauchy--Schwarz inequality it follows that
   \begin{align*}
       h_\ex\left|  \int_\Omega \mu(u_0, A_0) \xfield \right|&= h_\ex \left|\int_\Omega (1-|u_0|^2)(\nabla \varphi_0 - A_0) \right|\\
       &\leq Ch_\ex \ep \norm{1-|u_0|^2}{L^2(\Omega)} \norm{\nabla \varphi_0 - A_0}{L^2(\Omega)}\\
       &\stackrel{\eqref{hex upper bound}}{\leq} C |\log \ep|^N \ep \fen(u_0, A_0)=o(1)\fen(u_0, A_0).
   \end{align*}
   On the other hand,
    \begin{equation*}
    |R_0| \stackrel{\eqref{boundR0}}{\leq} Ch_\ex^2 \ep \fen(u_0,A_0)^\frac{1}{2} \stackrel{\eqref{hex upper bound}}{\leq} C \ep |\log \ep|^{2N} \fen(u_0,A_0)^\frac{1}{2}.
    \end{equation*}
   Therefore, we have
   \begin{align*}
       0&>GL_{\ep}(\u_0,\A_0) - GL_\ep(\pmin,h_\ex A^0_\ep) \\
       &> \fen(u_0, A_0)(1-o(1)) - \ep|\log \ep|^{2N} \fen(u_0, A_0)^{\frac{1}{2}}.
    \end{align*}
    This implies that $\fen(u_0, A_0)^\frac12\leq C\ep |\log\ep|^{2N}=o(1)$ and therefore
    $$
    GL_\ep(\u_0, \A_0) - GL_\ep(\pmin, h_\ex A^0_\ep) = o(1).
    $$
    This means that $GL_\ep(\u, \A) \ll GL_\ep(\u_0, \A_0)$ for every vortexless configuration $(\u_0, \A_0)$. Hence, global minimizers of \eqref{GLenergy} in the regime \eqref{hex upper bound} do have vortices. This concludes the proof of the theorem.
    \end{enumerate}
\end{proof}

\begin{remark}
Notice that the hypothesis on $\pmin$ only plays a role at the end of {\bf Step 2}. Moreover, we can replace $ [\pmin^2]_{C^{0,\alpha}(\Omega)}\leq |\log\ep|^m$ by $[\pmin^2]_{C^{0,\alpha}(B_\ep)}\leq |\log \ep|^m$, that is, we only need a control over the H\"older seminorm around the points where the function $\psi$ achieves its maximum in $\overline \Omega$. 
\end{remark}

\section{Existence and uniqueness of a Meissner configuration above the first critical field}\label{above fcf}

\subsection{Proof of Theorem \ref{existence-above-fcf}} 
\begin{proof}
    In this proof, we use $C$ to denote a positive constant independent of $\ep$ that might change in each line.
    \begin{enumerate}[label=\textsc{\bf Step \arabic*},leftmargin=0pt,labelsep=*,itemindent=*,itemsep=10pt,topsep=10pt]
    \item \emph{(Construction of the locally minimizing vortexless configuration. Proof of items (1), (2) and (3)).} Fix $\beta \in (0,2-4\alpha)$ and let $$U = \left\{(\mathbf{u,A}) \in H^1(\Omega,\C) \times H^1(\Omega, \R^2) \colon \diver A=0\ \mbox{in }\Omega,\ A\cdot \nu=0 \ \mbox{on }\partial\Omega,\ \fen(u,A) < \varepsilon^\beta \right\}.$$ 
    First, let us prove that there exists a configuration $(\u_\ep, \A_\ep)$ that minimizes $GL_\ep$ over $\overline{U}$. Note that if $(\mathbf{u,A}) = \left(\pmin, h_{\ex}A^0_\ep \right)$, then $(u,A) = (1,0)$. This means that $\fen(1,0)=0$ and $A=0$ (trivially) satisfies $\eqref{coulomb gauge}$. It follows that $\left(\pmin, h_{\ex}A^0_\ep \right) \in U$ and therefore, $U \neq \emptyset$. 
    
    On the one hand, using Sobolev embedding and the Cauchy--Schwarz inequality, we find that each $(\u,\A) = (\pmin u, A + h_\ex A^0_\ep) \in U$ satisfies
    $$\norm{A}{H^1(\Omega)}^2 \stackrel{\eqref{coulomb h1 estimate}}{\leq} C \norm{\curl A}{H^1(\Omega)}^2 \leq C \fen(u,A)< C \ep^\beta,$$
    $$\norm{u}{L^4(\Omega)}^2 =\norm{u^2}{L^2(\Omega)}\leq C+\|1-|u|^2\|_{L^2(\Omega)}\leq C(1 + \ep\fen(u,A)^\frac12)< C(1 +\ep^{1+\frac\beta2}),$$
    $$\norm{\nabla u}{L^2(\Omega)} \leq C\norm{\nabla_A u}{L^2(\Omega)} + \norm{Au}{L^2(\Omega)} \leq \fen(u,A)^{\frac{1}{2}} + \norm{A}{L^4(\Omega)}\norm{u}{L^4(\Omega)}\leq C \ep^{\frac{\beta}{2}} .$$
    Hence, $U$ is bounded. 
    
    On the other hand, by writing
    \begin{align*}
        GL_\ep(\u, \A) = \frac{1}{2}\int_\Omega (|\nabla \u|^2 + |\A|^2|\u|^2 -2\ip{\nabla \u}{i\A \u}) + |\curl \A - h_\ex|^2 + \frac{(a_\ep - |\u|^2)^2}{2\ep^2},
    \end{align*}
    we deduce that $GL_\ep$ is $H^1$-weakly lower semicontinuous, since:
    \begin{itemize}
        \item The term $\int_\Omega |\nabla \u|^2 + |\curl \A - h_\ex|^2$ is convex and $H^1$-strongly continuous. Therefore, it is $H^1$-weakly lower semicontinuous.
        \item The term $\frac{1}{2}\int_\Omega \frac{(a_\ep-|\u|^2)^2}{2\ep^2}$ is $L^4$-strongly continuous and, by the Rellich--Kondrachov theorem, also $H^1$-weakly continuous.
        \item By the Cauchy--Schwarz inequality, the term $\int_\Omega |\A|^2|\u|^2$ is $L^4$-strongly continuous and, once again, by the Rellich-Kondrachov theorem it is also $H^1$-weakly continuous.
        \item The term $\int_\Omega \ip{\nabla \u}{i\A\u}$ is also $H^1$-weakly continuous. To see this, if $(\u_n, \A_n)$ weakly converges to $(\u,\A)$ in $H^1(\Omega, \C) \times H^1(\Omega, \R^2)$, then, by the Rellich-Kondrachov theorem, $(\u_n, \A_n)$ strongly converges to $(\u,\A)$ in $L^4(\Omega, \C) \times L^4(\Omega, \R^2)$ and therefore, $\A_n \u_n$ strongly converges to $\A \u$ in $L^2(\Omega)$. This means that $\int_\Omega \ip{\nabla \u_n}{i\A_n\u_n}$ converges to $\int_\Omega \ip{\nabla \u}{i\A\u}$.
    \end{itemize}
    Since $GL_\ep$ is $H^1$-weakly lower semicontinuous in a nonempty bounded set $U$, it follows that there exists $(\u_\ep, \A_\ep)$ that minimizes $GL_\ep$ over $\overline{U}$. Moreover, we have
      \begin{equation}\label{localmin energy bound}
         \fen(u_\ep, A_\ep) \leq \ep^\beta.
     \end{equation}
     We claim that $(\mathbf{u_\varepsilon,A_\varepsilon}) \in U$, which in turn implies that $(\u_\ep, \A_\ep)$ is a critical point and thus, a solution of $\eqref{euler lagrange gl}$. From now on we drop the $\varepsilon$ subscript.
    
    Since $(\mathbf{u,A})$ is a minimizing configuration in $\overline U$,
    \begin{equation}\label{up}GL_\varepsilon(\mathbf{u,A}) \leq  GL_\ep(\pmin, h_\ex A^0_\ep).
    \end{equation}    
    By combining \eqref{Split} with \eqref{up}, we deduce that
    \begin{equation}\label{boundforenergy}\fen(u,A) \leq h_\ex\int_\Omega \mu(u,A)\xi_\varepsilon - R_0.
    \end{equation}
    First, let us bound the vorticity term. Since $\fen(u,A) \leq \ep^\beta$, we can apply Proposition \ref{ball lower bound}, which provide us with a collection of balls $\mathcal{B} = \{B_i\}_i = \{B(a_i, r_i)\}$, with $\sum_i r_i\leq r=\varepsilon^{\mu}$ and where $\mu\in(\alpha,1)$ is a fixed number.
    
    By combining \eqref{localmin energy bound} and \eqref{degree bound}, we obtain
    $$
    \sum_i |d_{B_i}| \leq C\frac{\fen(u,A)}{|\log \ep|}\leq C\frac{\ep^{\beta}}{|\log\ep|}=o(1).
    $$   
    It follows that $\sum_i |d_{B_i}| = 0$, which implies $d_{B_i} = 0$ for all $i$.    
    Hence, it follows from \eqref{jacobian estimate} and the hypothesis $h_\ex \leq \ep^{-\alpha}$, that
    \begin{align}
    h_{\ex} \left| \int_\Omega \mu(u,A)\xi_\varepsilon \right| &\leq C \varepsilon^{-\alpha} \varepsilon^{\mu} \fen (u,A)\norm{\nabla \xfield}{L^{\infty}(\Omega)}\nonumber \\
    &\stackrel{\eqref{xi lipschitz estimate}}{\leq} C \varepsilon^{-\alpha + \beta + \mu }\nonumber\\
    &\stackrel{\mu > \alpha}{=} o(\varepsilon^{\beta}) \label{localmin vorticity estimate}.
    \end{align}
    An analogous argument shows that item (2) holds, that is, $$\norm{\mu(u,A)}{\left(C^{0,1}_0(\Omega)\right)^*} = o(1).$$
    
    Let us now provide an upper bound for $|R_0|$. Combining $h_\ex\leq \ep^{-\alpha}$ with \eqref{localmin energy bound}, \eqref{xi lipschitz estimate} and \eqref{boundR0} yields
    \begin{align*}
        |R_0| &\leq C h_{\ex}^2 \varepsilon \fen(u,A)^{\frac{1}{2}}\norm{\nabla \xfield}{L^\infty(\Omega)} \\
        &\leq C\varepsilon^{\frac{\beta}{2} +1 - 2\alpha}.
    \end{align*}
    Observe that since $\beta < 2-4\alpha$, we have
    $$\frac{\beta}{2} + 1 - 2\alpha > \frac{\beta}{2} + \frac{\beta}{2} = \beta,$$
    which means that
    \begin{equation}\label{localmin r0 bound}
        |R_0| = o(\ep^{\beta}).
    \end{equation}
    Therefore, inserting \eqref{localmin vorticity estimate} and \eqref{localmin r0 bound} into \eqref{boundforenergy}, we deduce that
    \begin{equation}\label{localmin free energy in U}
        \fen(u,A) \leq h_\ex\left| \int_\Omega \mu(u,A)\xfield\right| + |R_0| \leq   o(\varepsilon^\beta).
    \end{equation}
    The claim is thus proved, that is, $(u,A) \in U$ for small enough $\ep$. Moreover, since $U$ is open, the configuration $(\mathbf{u,A})$ must be a local minimizer of $GL_\varepsilon$.

    Finally, by combining \eqref{localmin energy bound}, \eqref{localmin vorticity estimate} and \eqref{localmin r0 bound}, we conclude that item (1) holds, since (recall \eqref{up})
    $$GL_\ep(\pmin, h_\ex A^0_\ep) \geq GL_\varepsilon(\u,\A) = GL_\ep(\pmin, h_\ex A^0_\ep) + O(\ep^\beta).$$

    Let us now prove that $(\u,\A)$ is a vortexless configuration. Since $(\mathbf{u,A})$ is a local minimizer, it solves the Ginzburg--Landau equations \eqref{euler lagrange gl}. Since we have \eqref{localmin free energy in U}, it follows from \eqref{coulomb linfty estimate} that $\norm{A}{L^\infty(\Omega)} \leq \frac{C}{\ep}$ and therefore, from \eqref{coulomb grad estimate}, that 
    $$
    \norm{\nabla |u|}{L^\infty(\Omega)} \leq \norm{\nabla u}{L^\infty(\Omega)} \leq \frac{C}{\ep}.
    $$ 
    Thus, by Proposition \ref{clearing out}, item (3) holds.
    
    \item \emph{(Closeness to the Meissner configuration. Proof of items (4) and (5)).} We start by estimating $\norm{\nabla u}{L^2(\Omega)}$. Note that
    $$ \int_\Omega |\nabla u|^2 \leq 2\left(\int_\Omega |\nabla_A u|^2 + |A|^2 |u|^2 \right).$$ 
    On the other hand, the Coulomb gauge estimate \eqref{coulomb h1 estimate} yields
    \begin{equation}\label{after splitting A h1 estimate}
        \norm{A}{H^1(\Omega)} \leq C \norm{\curl A}{L^2(\Omega)} \leq C \fen(u,A)^{\frac{1}{2}} \stackrel{\eqref{localmin free energy in U}}{=} o(\varepsilon^{\frac{\beta}{2}}).
    \end{equation}
        This together with the uniform convergence from item (1) $\norm{1-|u|}{L^\infty(\Omega)} = o(1)$, leads us to
        \begin{equation}\label{after splitting grad u l2 estimate}
            \int_\Omega |\nabla u|^2 \leq C\left(\fen(u,A)  + \norm{A}{L^2(\Omega)}^2\right) =o(\varepsilon^\beta).
        \end{equation}   
        Let us now provide an estimate for $\norm{u}{L^2(\Omega)}$. Defining $\overline{u} = \frac{1}{|\Omega|} \int_\Omega u$, by the Poincar\'e--Wirtinger inequality, we have
        $$\int_\Omega |u-\overline{u}|^2 \leq C \int_\Omega |\nabla u|^2 = o(\varepsilon^\beta)$$
        We then deduce that
        \begin{align*}
            \int_\Omega (1-|\overline{u}|)^2 \leq 2 \left(\int_\Omega |1-u|^2  + |u-\overline{u}|^2\right) = o(\varepsilon^\beta).
        \end{align*}
        Since $\overline{u}$ is constant in $\Omega$, we deduce that $u=e^{i\theta_\varepsilon} + o(\varepsilon^\frac{\beta}{2})$. Combining this with \eqref{after splitting A h1 estimate} and \eqref{after splitting grad u l2 estimate} yields that item (4) holds, since 
        \begin{equation}\label{after splitting approximation}
            \inf_{\theta \in [0,2\pi]} \norm{u-e^{i\theta}}{H^1(\Omega)} + \norm{A}{H^1(\Omega)} = \inf_{\theta \in [0,2\pi]} \norm{u-e^{i\theta}}{L^2(\Omega)} + \norm{\nabla u}{L^2(\Omega)} + \norm{A}{H^1(\Omega)} = o(\varepsilon^\frac{\beta}{2}).
        \end{equation}

        Finally, we prove item (5). The estimate on $\norm{\mathbf{A} - h_{\ex}A^0_\ep}{H^1(\Omega)}$ follows immediately, since $\mathbf{A} - h_{\ex} A^0_\ep = A$.
        On the other hand, for $r \in [1,2)$, let $s>2$ such that $\frac{1}{r} = \frac{1}{s} + \frac{1}{2}$. Then, using Hölder's inequality and a Sobolev embedding, we deduce that (recall $\u=\pmin u$) 
        \begin{align*}
            \norm{\mathbf{u}-\pmin e^{i\theta}}{W^{1,r}(\Omega)} &\leq \norm{\pmin(u-e^{i\theta})}{L^r(\Omega)} + \norm{\pmin \nabla u}{L^r(\Omega)} + \norm{(u-e^{i\theta})\nabla \pmin}{L^r(\Omega)} \\
            &\stackrel{\pmin\leq 1}{\leq} C\left(\norm{u-e^{i\theta}}{L^2(\Omega)} + \norm{\nabla u}{L^2(\Omega)}\right) + \norm{u-e^{i\theta}}{L^s(\Omega)}\norm{\nabla \pmin}{L^2(\Omega)} \\
            &\leq C\norm{u-e^{i\theta}}{H^1(\Omega)}\left(1+\norm{\nabla \pmin}{L^2(\Omega)}\right)\\
            &\stackrel{\eqref{after splitting approximation}}{\leq} C\norm{u-e^{i\theta}}{H^1(\Omega)}(1+\varepsilon^{-\gamma}),
        \end{align*}
        where in the last inequality we used the hypothesis $\norm{\nabla \pmin}{L^2(\Omega)} \leq \ep^{-\gamma}$, for $\gamma < 1-2 \alpha$. Since, until this point, the choice of $\beta\in(0, 2-4\alpha)$ was arbitrary, we may change it if necessary, so that $\frac{\beta}{2} \in (\gamma,1-2\alpha)$. Hence
        $$\inf_{\theta \in [0,2\pi]} \norm{\u - \pmin e^{i\theta}}{W^{1,r}(\Omega)} \leq C \ep^{-\gamma} \inf_{\theta \in [0,2\pi]} \norm{u-e^{i \theta}}{H^1(\Omega)} \stackrel{\eqref{after splitting approximation}}{\leq} o(\ep^{\frac{\beta}{2}})\ep^{-\gamma} = o(1).$$
        This concludes the proof.
    \end{enumerate}
\end{proof}

\subsection{Proof of Theorem \ref{uniqueness-above-fcf}}
We now prove the uniqueness (up to a gauge transformation) of a vortexless minimizing configuration.

\begin{proof}
    We will adapt the proofs of \cite{sylvia-arma}*{Section 2} and \cite{Rom-CMP}*{Theorem 1.5}. To prove uniqueness up to a gauge transformation, we will prove that there is a unique minimizer in the Coulomb gauge. Suppose $(\u_j, \A_j) = (\pmin u_j, A_j+ h_{\ex} A_\varepsilon^0)$ are distinct local minimizers, where $\A_j$ satisfies the Coulomb gauge condition \eqref{coulomb gauge} for $j=1,2$. Since $A^0_\ep$ also satisfies \eqref{coulomb gauge}, we deduce that $A_j$ does it as well.

    By \eqref{coulomb grad estimate} and Proposition \ref{clearing out}, we have that $|u_j|$ converges uniformly to 1. In particular, we have $|u_j| \geq \frac{3}{4}$ for small enough $\ep$\footnote{Actually, any $c$ in the domain of convexity of $(1-x^2)^2$ will do, that is $|u_j| \geq c > \frac{1}{\sqrt{3}}$. We choose $\frac34$ as in \cite{sylvia-arma}.}. Therefore we can write $u_j =\eta_j e^{i\phi_j}$, where $\eta_j = |u_j|$. Note that $(u_j,A_j)$ is gauge-equivalent to $(\eta_j, A'_j)$, where $A'_j = A_j - \nabla \phi_j$. Let $A^\circ_j = A_j +h_{\ex}A_\ep^0 - \nabla \phi_j$, which is gauge-equivalent to $(\u_j, \A_j)$ and therefore is a local minimizer.
    \begin{enumerate}[label=\textsc{\bf Step \arabic*},leftmargin=0pt,labelsep=*,itemindent=*,itemsep=10pt,topsep=10pt]
    \item \emph{(Proving that$\norm{A^\circ_j}{L^\infty(\Omega)} = o(\ep^{-1})$).} Observe that 
    $$\norm{A^\circ_j}{L^\infty(\Omega)} \leq \norm{A_j+ h_{\ex} A^0_{\ep}}{L^\infty(\Omega)} + \norm{\nabla \phi_j}{L^\infty(\Omega)}.$$      
    From \eqref{coulomb linfty estimate}, we have that 
    \begin{equation}\label{estimaAj}
    \norm{A_j + h_\ex A^0_\ep}{L^\infty(\Omega)} = o(\ep^{-1}),
    \end{equation}
    since we are assuming $E_\ep(\pmin) \ll \frac{1}{\ep^2}$ and $\fen(u_j,A_j) < \ep^\beta$. We are then left to prove $\norm{\nabla \phi_j}{L^\infty(\Omega)} = o(\ep^{-1})$. 
    
    By gauge-invariance, $(\pmin \eta_j,A^\circ_j)$ is also a local minimizer and thus, it satisfies \eqref{euler lagrange gl}. In particular, we have
    $$-\nabla^\perp \curl A_j^\circ = \ip{i \pmin \eta_j}{\nabla_{A^\circ_j} (\pmin \eta_j)} = -(\pmin \eta_j)^2 A^\circ_j\quad \mathrm{in}\ \Omega,$$
    which implies that
    $$
    \diver \left(\pmin^2 \eta_j^2 A_j^\circ\right) = \diver\left(\pmin^2 \eta_j^2(A + h_{\ex} A^0_\ep - \nabla \phi_j)\right) = \diver\left(\nabla^\perp \curl A^\circ_j\right) = 0 \quad \mathrm{in}\ \Omega.
    $$ Moreover, since $\pmin^2 A^0_\ep = -\nabla^\perp \xfield$, we have that $\diver \left(\pmin^2 A^0_\ep\right) = 0$ in $\Omega$. Recalling that $A_j$ satisfies \eqref{coulomb gauge}, a direct calculation then yields
    $$
    2\pmin^2 \eta \nabla \eta_j \cdot A_j^\circ + 2 \eta_j^2 \pmin \nabla \pmin \cdot A'_j - \eta_j^2 \pmin^2 \Delta \phi_j = 0\quad \mathrm{in}\ \Omega.
    $$
    
    On the other hand, from the first boundary condition in \eqref{euler lagrange gl}, we have that 
    $$
    \nabla_{A^\circ_j} (\pmin \eta_j) \cdot \nu =0 \quad \mathrm{on}\ \partial \Omega.
    $$
    Recalling the boundary condition in \eqref{pde rho} and that both $A$ and $A^0_\ep$ satisfy \eqref{coulomb gauge}, we deduce that 
    $$
    \left(\nabla \eta_j - i \nabla \phi_j\right) \cdot \nu = 0 \quad \mathrm{on}\ \partial \Omega
    $$ 
    and, in particular, $\nabla \phi_j \cdot \nu = 0$ on $\partial\Omega$. 
    
    Hence, $\phi_j$ solves the following elliptic PDE
    \begin{equation}\label{phi equation}
            \left\{
            \begin{array}{rcll}
            -\Delta \phi_j &=& -2\left(\dfrac{\nabla \eta_j}{\eta_j} \cdot A^\circ_j + \dfrac{\nabla \pmin}{\pmin} \cdot A'_j\right)  &\mathrm{in}\ \Omega\\
            \dfrac{\partial \phi_j}{\partial \nu} &=& 0&\mathrm{on}\ \partial \Omega.
            \end{array}
            \right.
        \end{equation}
    Since $\eta_j \geq \frac34 > 0$ and $\pmin \geq \sqrt{b} > 0$, we have, for any $q>1$, that
    $$
    \norm{\Delta \phi_j}{L^q(\Omega)} \leq C\left(\norm{\nabla \eta_j \cdot A^\circ_j}{L^q(\Omega)} + \norm{\nabla \pmin \cdot A'_j}{L^q(\Omega)})\right.
    $$
    We now estimate the terms in the RHS by interpolating between $L^2(\Omega)$ and $L^\infty(\Omega)$. The $L^\infty$-bounds come from our estimates for critical points of $GL_\ep$ in the Coulomb gauge, whereas the $L^2$-bounds follow from the smallness of $\fen(u_j, A_j)$, since we have
    \begin{equation}\label{free energy hypothesis}
        \fen(u_j,A_j) = \frac{1}{2} \int_\Omega \pmin^2\left(|\nabla \eta_j|^2 + |\eta_j|^2|A_j-\nabla \phi_j|^2\right) + |\curl A_j|^2 + \pmin^4 \frac{(1-\eta^2)^2}{2\ep^2} < \ep^\beta.
    \end{equation}
    First, for any $q>2$, we have
    \begin{align}\label{estimaetaj}
        \norm{\nabla \eta_j}{L^q(\Omega)} &\leq \norm{\nabla \eta_j}{L^\infty(\Omega)}^{1-\frac{2}{q}} \norm{\nabla \eta_j}{L^2(\Omega)}^\frac{2}{q}\notag \\
        &\stackrel{\eqref{coulomb grad estimate}}{\leq} C(\varepsilon^{-1})^{1-\frac{2}{q}} \left(\frac{1}{\|\rho_\ep\|_{L^\infty(\Omega)}}\fen(u_j,A_j) \right)^{\frac{1}{q}}\notag\\
        &\stackrel{\eqref{free energy hypothesis}}{\leq} C\varepsilon^{\frac{2}{q}-1} \varepsilon^{\frac{\beta}{q}}=C\ep^{\frac{2+\beta}{q} - 1}.
    \end{align}
    Second, for any $q>2$, we have
    \begin{align}\label{estimaphij}
        \norm{\nabla \phi_j}{L^q(\Omega)} &\leq \norm{\nabla \phi_j}{L^\infty(\Omega)}^{1-\frac{2}{q}} \norm{\nabla \phi_j}{L^2(\Omega)}^\frac{2}{q} \notag\\
        &\stackrel{\eqref{coulomb grad estimate}}{\leq} C(\varepsilon^{-1})^{1-\frac{2}{q}} \left(\norm{\nabla \phi_j - A_j}{L^2(\Omega)} + \norm{A_j}{L^2(\Omega)}\right)^{\frac{2}{q}}\notag\\
        &\stackrel{\eqref{coulomb h1 estimate}}{\leq} C\ep^{\frac{2}{q}-1}\left(\frac{1}{\norm{\pmin \eta_j}{L^\infty(\Omega)}}\fen(u_j,A_j)^\frac{1}{2}+\norm{\curl A_j}{L^2(\Omega)} \right)^{\frac{2}{q}}\notag\\
        &\stackrel{\eqref{free energy hypothesis}}{\leq} C\ep^{\frac{2}{q}-1} \ep^{\frac{\beta}{q}}=C\ep^{\frac{2+\beta}{q} - 1}.
    \end{align}
    Hence, from \eqref{estimaAj}, \eqref{estimaetaj}, and \eqref{estimaphij}, we conclude that, for any $q\in (2,2+\beta)$, we have
    \begin{align*}
        \norm{\nabla \eta_j \cdot A^\circ_j}{L^q(\Omega)} &\leq  \norm{\nabla \eta_j \cdot (A_j + h_{\ex} A^0_\ep)}{L^q(\Omega)} + \norm{\nabla \eta_j \cdot \nabla \phi_j}{L^q(\Omega)}\\
        &\leq \norm{\nabla \eta_j}{L^q(\Omega)} \norm{A_j + h_{\ex}A^0_\ep}{L^\infty(\Omega)} + \norm{\nabla \eta_j}{L^q(\Omega)}\norm{\nabla \phi_j}{L^\infty(\Omega)}\\
        &\stackrel{\eqref{coulomb linfty estimate}}{\leq} o(\ep^{-1})
    \end{align*}
    and 
    \begin{align*}
        \norm{\nabla \pmin \cdot A'_j}{L^q(\Omega)} &\leq \norm{\nabla \pmin}{L^\infty(\Omega)} \norm{A_j}{L^q(\Omega)} + \norm{\nabla \pmin}{L^\infty(\Omega)} \norm{\nabla \phi_j}{L^q(\Omega)} \\
        &\leq \frac{C}{\ep}\norm{A_j}{H^1(\Omega)} +o(\ep^{-1})\\
        &\stackrel{\eqref{coulomb h1 estimate}}{\leq} \frac{C}{\ep} \norm{\curl A_j}{L^2(\Omega)}+o(\ep^{-1})\\
        &\stackrel{\eqref{free energy hypothesis}}{\leq} o(\ep^{-1}),
    \end{align*}
    where after the first inequality we used Proposition \ref{proprho_ep} and Sobolev embedding.
    
    It follows that
    $$
    \norm{\Delta \phi_j}{L^q(\Omega)} = o(\ep^{-1})
    $$
    and, since $q>2$, by elliptic regularity and a Sobolev embedding, we have
    $$\norm{\nabla \phi_j}{L^{\infty}(\Omega)} = o(\ep^{-1}).$$
    This finally yields that 
    $$
    \norm{A^\circ_j}{L^\infty(\Omega)} \leq \norm{A_j+h_{\ex}A^0_\ep}{L^\infty(\Omega)} + \norm{\nabla \phi_j}{L^\infty(\Omega)} = o(\ep^{-1}).
    $$
    \item \emph{(Convexity argument)} By gauge-invariance, we have
    $$
    GL_\varepsilon(\u_j,\A_j) = GL_\varepsilon(\pmin \eta_j, A^\circ_j) = E_\ep(\pmin \eta_j,A^\circ_j)+\frac12\int_\Omega |\curl A^\circ_j - h_{\ex}|^2.
    $$
    Using \eqref{LM eq}, we have
    $$
    GL_\ep(\pmin \eta_j, A^\circ_j) = E_\ep(\pmin) + \frac{1}{2} \int_\Omega \pmin^2 \left(|\nabla \eta_j|^2 + \eta_j^2|A^\circ_j|^2\right)  + \pmin^4 \frac{(1-\eta_j^2)^2}{2\ep^2}+ |\curl A^\circ_j - h_\ex|^2.
    $$
    Let us define 
    $$
    Y \colonequals  \frac{GL_\varepsilon(\pmin \eta_1, A^\circ_1) + GL_\varepsilon(\pmin \eta_2, A^\circ_2)}{2} - GL_\varepsilon \left( \pmin\frac{\eta_1+ \eta_2}{2}, \frac{A^\circ_1 + A^\circ_2}{2}\right).
    $$
    We claim that $Y>0$. To prove this, let us write $Y =  \frac{1}{2}(Y_1 + Y_2 + Y_3)$, where
    \begin{align*}
        Y_1 &= \left(\int_\Omega \pmin^2\left(\frac{|\nabla \eta_1|^2 + |\nabla \eta_2|^2}{2}\right) - \int_\Omega \pmin^2 \left| \nabla \left(\frac{\eta_1+\eta_2}{2}\right) \right|^2 \right)  \\
        &\ \ +\left( \int_\Omega \frac{|\curl A^\circ_1 - h_{\ex}|^2 + |\curl A^\circ_2 - h_{\ex}|^2}{2} - \int_\Omega \left| \curl \left( \frac{A^\circ_1+A^\circ_2}{2}\right) - h_{\ex} \right|^2 \right),\\
        Y_2 &= \int_\Omega \frac{\pmin^4}{2\ep^2} \left(\frac{(1-\eta_1^2)^2+(1-\eta_2^2)^2}{2}\right) - \int_\Omega \frac{\pmin^4}{2\ep^2}\left(1 - \left(\frac{\eta_1 + \eta_2}{2} \right)^2  \right)^2,\ \mathrm{and}\\
        Y_3 &= \int_\Omega \pmin^2\left( \frac{|A^\circ_1|^2|\eta_1|^2 + |A^\circ_2|^2|\eta_2|^2}{2}\right) - \int_\Omega \pmin^2\left(\left|\frac{\eta_1+\eta_2}{2} \right|^2 \left|\frac{A^\circ_1+ A^\circ_2}{2} \right|^2\right).
    \end{align*}
    By convexity, we have that $Y_1 \geq 0$.
    
    Let us now provide an estimate for $Y_2$. A direct calculation yields (see \cite{sylvia-arma}*{Section 2} for the details)
    $$\frac{(1-\eta_1^2)^2+(1-\eta_2^2)^2}{2} - \left(1-\left(\frac{\eta_1+\eta_2}{2} \right)^2\right)^2 = \frac{1}{16}(\eta_1 - \eta_2)^2(7(\eta_1 + \eta_2)^2 - 4 \eta_1 \eta_2 - 8).$$
    Therefore, we have
    $$Y_2 = \frac{1}{32 \ep^2} \int_\Omega \pmin^4(\eta_1-\eta_2)^2(7(\eta_1+\eta_2)^2 - 4\eta_1 \eta_2 - 8),$$
    which combined with $\frac34 \leq \eta_j \leq 1$ and $\rho_\ep^4\geq b^2$, yields
    \begin{equation}\label{Y2}
    Y_2 \geq \frac{b^2}{32 \ep^2}\left(7 \left(\frac34 + \frac34 \right)^2 - 12\right) \norm{\eta_1-\eta_2}{L^2(\Omega)}^2=\frac{C_1}{\ep^2}\norm{\eta_1-\eta_2}{L^2(\Omega)}^2,
    \end{equation}
    where $C_1>0$ is a constant that depends on $b$ only.

    Let us now estimate $Y_3$. A direct calculation shows that (see \cite{sylvia-arma}*{Section 2} for the details)
    \begin{multline*}
        \frac{|A^\circ_1|^2|\eta_1|^2 + |A^\circ_2|^2|\eta_2|^2}{2} - \left|\frac{\eta_1+\eta_2}{2} \right|^2 \left|\frac{A^\circ_1+ A^\circ_2}{2} \right|^2 \\
        = \frac18(\eta_1 - \eta_2)^2|A^\circ_1 + A^\circ_2|^2 +\frac12\eta_1^2 |A^\circ_1 - A^\circ_2|^2 \\
        -\frac18(\eta_1 - \eta_2)(A^\circ_1 - A^\circ_2) \left(A^\circ_1(2 \eta_1 + 4 \eta_2) + A^\circ_2(6 \eta_1 + 8 \eta_2)\right).
    \end{multline*}
    Therefore
    \begin{multline*}
        Y_3 = \frac{1}{8} \int_\Omega \pmin^2 \big((\eta_1 - \eta_2)^2|A^\circ_1 + A^\circ_2|^2 +4\eta_1^2 |A^\circ_1 - A^\circ_2|^2\big) \\
    - \frac18\int_\Omega \pmin^2 \big((\eta_1 - \eta_2)(A^\circ_1 - A^\circ_2) (A^\circ_1(2 \eta_1 + 4 \eta_2) + A^\circ_2(6 \eta_1 + 8 \eta_2)\big)
    \end{multline*}
    which combined with $\pmin \eta_j \leq 1$, yields
    \begin{multline}\label{Y3}
    Y_3 \geq \frac{1}{8} \int_\Omega \pmin^2 \big((\eta_1 - \eta_2)^2|A^\circ_1 + A^\circ_2|^2 +4\eta_1^2 |A^\circ_1 - A^\circ_2|^2\big) \\
    -\frac18\int_\Omega \pmin |\eta_1 - \eta_2||A^\circ_1 - A^\circ_2| (6|A^\circ_1| + 14|A^\circ_2|).
    \end{multline}
    Note that $Y_3 \geq 0$ if $\eta_1 \equiv \eta_2$ or $A^\circ_1 \equiv A^\circ_2$, which in turn yields that $Y>0$. Indeed, if $\eta_1\equiv \eta_2$, $A_1^\circ\not \equiv A_2^\circ$, we have $Y_3 > \frac{1}{2} \int_\Omega \eta_1^2 |A^\circ_1 - A^\circ_2|^2 >0$. Hence, $Y\geq \frac12 Y_3>0$. On the other hand, if $A_1^\circ\equiv A_2^\circ$, then $\eta_1\not\equiv  \eta_2$, and therefore $Y\geq \frac12 Y_2>0$. 
    For this reason, we assume from now on that $\eta_1 \not\equiv  \eta_2$ and $A^\circ_1 \not\equiv A^\circ_2$. 
    
    From the $L^\infty$-bound obtained in {\bf Step 1}, we deduce that
    \begin{align}\label{Y4}
        \int_\Omega \pmin |\eta_1-\eta_2|&|A^\circ_1 - A^\circ_2| (6|A^\circ_1| + 14|A^\circ_2|)\notag \\
        &\leq \norm{\eta_1 - \eta_2}{L^2(\Omega)} \norm{A^\circ_1 - A^\circ_2}{L^2(\Omega)}\big(14(\norm{A^\circ_1}{L^\infty(\Omega)} + \norm{A^\circ_2}{L^\infty(\Omega)})\big)\notag \\
        &\leq o(\ep^{-1})\norm{\eta_1 - \eta_2}{L^2(\Omega)} \norm{A^\circ_1 - A^\circ_2}{L^2(\Omega)}.
    \end{align}
    On the other hand, using once again that $\eta_1\geq \frac34$ and $\rho_\ep^2\geq b$, from Young's inequality, we deduce that
    \begin{equation}\label{Y5}
    \frac12\int_\Omega \pmin^2 \eta_1^2|A^\circ_1 - A^\circ_2|^2 + \frac{C_1}{\ep^2} \norm{\eta_1 - \eta_2}{L^2(\Omega)}^2 \geq \frac{C_2}{\ep} \norm{\eta_1 - \eta_2}{L^2(\Omega)} \norm{A^\circ_1 - A^\circ_2}{L^2(\Omega)},
    \end{equation}
    where $C_2>0$ is a constant that depends on $b$ only. Finally, by combining \eqref{Y2}, \eqref{Y3}, \eqref{Y4}, and \eqref{Y5}, we are led to
    \begin{align*}
        Y_2 + Y_3 \geq \norm{\eta_1 - \eta_2}{L^2(\Omega)} \norm{A^\circ_1 - A^\circ_2}{L^2(\Omega)}\left(\frac{C}{\ep} - o(\ep^{-1}) \right).
    \end{align*}
    Hence, for sufficiently small $\ep$, we have $Y > 0$ on all cases.
    
    \item \emph{(Contradiction)} Assume without loss of generality that 
    $$GL_\varepsilon(\pmin \eta_1, A^\circ_1) \leq GL_\varepsilon(\pmin \eta_2, A^\circ_2).$$ 
    Since $Y > 0$, we have 
    $$
    GL_\ep(\pmin \eta_2, A^\circ_2) \geq \frac{GL_\varepsilon(\pmin \eta_1, A^\circ_1) + GL_\varepsilon(\pmin \eta_2, A^\circ_2)}{2} > GL_\varepsilon \left(\pmin \frac{\eta_1+\eta_2}{2}, \frac{A^\circ_1 + A^\circ_2}{2}\right).
    $$
    A standard argument then shows that, for any $t\in (0,1)$, we have
    $$
    GL_\ep(\pmin \eta_2, A^\circ_2) > GL_\varepsilon \Big(\pmin (t\eta_1+(1-t)\eta_2), tA^\circ_1 + (1-t)A^\circ_2\Big),
    $$
    which contradicts the local minimality of $(\pmin \eta_2, A^\circ_2)$. Therefore, $(\u_1, \A_1) = (\u_2, \A_2)$, which concludes the proof.
\end{enumerate}
\end{proof}

\appendix
\section{Lower bound for a weighted free energy functional}\label{ball construction chapter}
    Given a ball $B \subset \Omega$ and a function $\eta_\varepsilon \colon B \to [\sqrt{b},1]$, we define
    $$F_{\varepsilon,\eta_\varepsilon,B} (u,A) \colonequals \frac{1}{2} \int_B \eta^2_\varepsilon |\nabla_A u|^2 + \eta_\varepsilon^4 \frac{(1-|u|^2)^2}{2\varepsilon^2}+|\curl A|^2.$$
    In this appendix we find a lower bound for the weighted free energy functional \eqref{free-energy-weight}, based on lower bounds for $F_{\ep,\eta_\ep,B}$ on suitable disjoint balls $B$ that cover the ``bad set'' $\left\{|u|\leq \frac12\right\}$. This corresponds to a slightly modified version of Jerrard's ball construction method \cite{jerrard}. More precisely, we will closely follow the refined ball construction method provided by Sandier and Serfaty in \cite{mass-displacement}, in which a lower bound is provided for each individual ball. The proofs are mostly the same, so we will go in detail only where the presence of the weight $\eta_\varepsilon$ makes a difference. In addition, the numbered constants $c_i, C_i$ will play the same role as in the proofs in \cite{mass-displacement}. 
    
    \medskip
    We start by obtaining a lower bound for an energy defined on a circle, which actually is the cornerstone of this new version of the ball construction method. 
    In the following, we use the notation $\underline{\eta^2_\varepsilon}(\Theta) \colonequals \min_{\Theta} \eta^2_\varepsilon$, for any closed subset $\Theta$ of $\Omega$.

    \begin{lemma}\label{cota frontera de bola}
         Let $r>0$ and $a \in \Omega$ such that $B=B(a,r) \in \Omega$. Define $m = \min_{\partial B}|u|$. Then, for any $\ep$ such that $0 < \frac{\ep}{\underline{\eta^2_\ep}(B(a,r)) } \leq r$, we have
         
        \begin{equation}\label{energy bound on boundary of ball}
            \frac{1}{2} \int_{\partial B(a,r)} \eta^2_\varepsilon |\nabla |u||^2 + \eta_\varepsilon^4 \frac{(1-|u|^2)^2}{2\varepsilon^2} \geq c_0 \underline{\eta^2_\varepsilon}(B) \frac{(1-m)^2}{\varepsilon},
        \end{equation}
        where $c_0$ is a universal constant.
    \end{lemma}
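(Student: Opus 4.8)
The plan is to reduce the claim to the circle estimate that underlies the Sandier--Serfaty ball construction \cite{mass-displacement}, carrying the weight through by a dichotomy and letting the hypothesis $\ep/\underline{\eta^2_\ep}(B)\le r$ play exactly the role that $\ep\le r$ plays in the unweighted case. Write $f$ for the restriction of $|u|$ to the circle $\partial B=\partial B(a,r)$, and set $m=\min_{\partial B}f$, $M=\max_{\partial B}f$ and $\underline{\eta^2}\colonequals\underline{\eta^2_\ep}(B)$. Since $\partial B\subset\overline B$, one has the pointwise bounds $\eta_\ep^2\ge\underline{\eta^2}$ and $\eta_\ep^4\ge(\underline{\eta^2})^2$ on $\partial B$, and $|\nabla|u||\ge|\partial_\tau f|$ for the tangential derivative. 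I would then split into two regimes according to how high $f$ climbs on the circle.

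In the potential-dominated regime $M\le\frac{1+m}{2}$, we have $f\le\frac{1+m}{2}$ on $\partial B$, hence $1-f^2=(1-f)(1+f)\ge\frac{1-m}{2}$ everywhere. Discarding the gradient term and bounding the weight from below by $(\underline{\eta^2})^2$ gives $\frac12\int_{\partial B}\eta_\ep^4\frac{(1-f^2)^2}{2\ep^2}\ge c\,(\underline{\eta^2})^2\frac{(1-m)^2}{\ep^2}\,r$, using $|\partial B|=2\pi r$. Now the hypothesis $r\ge\ep/\underline{\eta^2}$ turns the prefactor $(\underline{\eta^2})^2\, r/\ep^2$ into $\underline{\eta^2}/\ep$, producing $c_0\underline{\eta^2}(1-m)^2/\ep$ with a universal constant. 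This is precisely the step that forces the admissibility condition to be $\ep/\underline{\eta^2}\le r$ rather than $\ep\le r$.

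In the transition regime $M>\frac{1+m}{2}$, the function $f$ sweeps the whole interval $[m,\frac{1+m}{2}]$, and I would combine the two energy densities by the pointwise AM--GM inequality, which gives $\frac12\eta_\ep^2|f'|^2+\eta_\ep^4\frac{(1-f^2)^2}{4\ep^2}\ge c\,\frac{\eta_\ep^3}{\ep}\,|f'|\,(1-f^2)$. Using $\eta_\ep^3\ge(\underline{\eta^2})^{3/2}$ and the coarea inequality $\int_{\partial B}|f'|(1-f^2)\ge\int_m^{(1+m)/2}(1-t^2)\,dt\ge\frac38(1-m)^2$ (the last step from $1-t^2\ge1-t$), one arrives at a lower bound of order $(\underline{\eta^2})^{3/2}(1-m)^2/\ep$. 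If $|u|>1$ somewhere, one simply integrates up to the level $1$, which is still swept since $M\ge\frac{1+m}{2}$.

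The main obstacle, and the only genuinely new feature compared with the unweighted lemma, is the mismatch between the weight $\eta_\ep^2$ on the gradient and $\eta_\ep^4$ on the potential: the balanced AM--GM in the transition regime unavoidably produces the geometric-mean power $\eta_\ep^3$, i.e.\ a prefactor $(\underline{\eta^2})^{3/2}$ instead of the advertised $\underline{\eta^2}$. This is not a defect of the estimate but reflects the true scaling, since the Bogomol'nyi-type lower bound for a transition is saturated at length scale $\ep/\sqrt{\underline{\eta^2}}\le r$, so the transition energy genuinely has order $(\underline{\eta^2})^{3/2}(1-m)^2/\ep$. To land on the stated inequality I would absorb the extra half-power through the uniform ellipticity $\underline{\eta^2}\ge b$, writing $(\underline{\eta^2})^{3/2}\ge\sqrt b\,\underline{\eta^2}$; combining the two regimes then yields the asserted bound $c_0\,\underline{\eta^2}\,(1-m)^2/\ep$, with the transition constant being the binding one. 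I would flag that, strictly speaking, this makes $c_0$ depend on $b$ (the potential regime alone being universal), the $b$-dependence entering exactly through this weight-power mismatch.
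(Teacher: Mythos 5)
Your proof is correct, and it takes a genuinely different route from the paper's. The paper follows Jerrard's original argument: setting $\gamma=\frac12\int_{\partial B}|\nabla|u||^2$, it uses Morrey's embedding $H^1(\partial B)\hookrightarrow C^{0,1/2}(\partial B)$ to show that $|u|\le\frac{1+m}{2}$ on an arc of length $\sim(1-m)^2/\gamma$ around the minimum point, bounds the potential term from below on an arc of length $\min\bigl\{\sigma,\ep/\underline{\eta^2_\ep}(B)\bigr\}$ (this is where the hypothesis $\ep/\underline{\eta^2_\ep}(B)\le r$ enters, exactly as in your potential-dominated regime), and then optimizes the resulting sum over $\gamma$. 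Your dichotomy on $M=\max_{\partial B}|u|$ replaces the Morrey step by the pointwise AM--GM plus coarea (Modica--Mortola) inequality in the transition regime; both are standard and both deliver the estimate. What your route buys is transparency about the constant, and here your flag is not only honest but exposes a real defect in the paper's argument: the geometric mean of the weights $\eta^2_\ep$ and $\eta^4_\ep$ is $\eta^3_\ep$, so the transition regime genuinely produces the prefactor $\bigl(\underline{\eta^2_\ep}(B)\bigr)^{3/2}$, reflecting the true transition width $\ep/\eta_\ep$ rather than $\ep/\eta^2_\ep$. The paper avoids this only by choosing $\sigma=\frac{(1-m)^2}{\underline{\eta^2_\ep}(B)\,\gamma}$, which is larger, by the factor $1/\underline{\eta^2_\ep}(B)\ge1$, than the arc on which Morrey's inequality actually guarantees $|u|\le\frac{1+m}{2}$; with the justified choice $\sigma\sim(1-m)^2/\gamma$, the optimization over $\gamma$ in the paper's own proof yields precisely $\bigl(\underline{\eta^2_\ep}(B)\bigr)^{3/2}(1-m)^2/\ep$, matching your scaling analysis. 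Consequently the lemma's claim that $c_0$ is \emph{universal} is too strong: either $c_0$ must be allowed to depend on $b$ (your $c_0\sim\sqrt b$, obtained from $(\underline{\eta^2_\ep}(B))^{3/2}\ge\sqrt b\,\underline{\eta^2_\ep}(B)$), or the right-hand side should carry $(\underline{\eta^2_\ep}(B))^{3/2}$. This is harmless for the rest of the paper, where $b$ is a fixed constant and all downstream constants may depend on it, but your version of the statement is the accurate one.
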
 
    
    \begin{proof}
        We follow the proof of \cite{jerrard}*{Lemma 2.3}. Within this proof, $C$ denotes a positive constant that does not depend on $r$ and that may change from line to line.
        
        Let $x_m \in \partial B(a,r)$ such that $|u(x_m)| = m$ and define
        $$\gamma \colonequals \frac{1}{2} \int_{\partial B(a,r)} |\nabla|u||^2.$$
        From Morrey's inequality, we have, for any $x,y \in \partial B(a,r)$, that
        $$||u(x)|-|u(y)|| \leq C \norm{\nabla |u|}{L^2(\partial B(a,r))}|x-y|^{\frac{1}{2}} = C \gamma^{\frac{1}{2}} |x-y|^{\frac{1}{2}}.$$
        Therefore, for any $x \in \partial B(a,r)$ such that $|x-x_m|^\frac12 \leq \dfrac{|1-m|}{C\gamma^\frac12}$, we have 
        $$|u(x)| \leq |u(x_m)| + C\gamma^{\frac{1}{2}}|x-x_m|^{\frac{1}{2}} \leq  \frac{1+m}{2}.$$
        Since $r \geq \frac{\varepsilon}{\underline{\eta^2_\varepsilon}(B)}$, for any $\sigma>0$, the arclength of $\partial B(x,r) \cap B(x_m,\sigma)$ must be greater than $C \min\{\sigma,\frac{\varepsilon}{\underline{\eta^2_\varepsilon} (B)}\}$. 
        Moreover, since $(1-|u|^2)^2 \geq \frac{(1-m)^2}{C}$ whenever $|u| \leq \frac{1+m}{2}$, by choosing $\sigma = \frac{(1-m)^2}{\underline{\eta^2_\ep}(B) \gamma}$, we find
        \begin{align*}
            \frac{1}{2} \int_{\partial B(x,r)} \eta^2_\varepsilon |\nabla |u||^2 + & \eta^4_\ep \frac{(1-|u|^2)^2}{2\varepsilon^2} \\
            &\geq \underline{\eta^2_\ep}(B) \gamma + \underline{\eta^2_\varepsilon}(B)^2 \frac{(1-m)^2}{C \varepsilon^2} \min \left\{ \frac{\varepsilon}{\underline{\eta^2_\varepsilon}(B)}, \frac{(1-m)^2}{\underline{\eta^2_\ep}(B )\gamma} \right\}\\
            &= \underline{\eta^2_\ep}(B) \left(\gamma + \frac{(1-m)^2}{C\ep^2} \min\left\{\ep, \frac{(1-m^2)}{\gamma}\right\} \right).
        \end{align*}
        
        If $\ep \leq \frac{(1-m)^2}{\gamma}$, we obtain \eqref{energy bound on boundary of ball}. Otherwise, we can minimize $\gamma + \frac{K^2}{\gamma}$ with respect to $\gamma$, where $K = \frac{(1-m)^2}{C\varepsilon}$. Since $\gamma = K$ is a stationary point and $\gamma + \frac{K^2}{\gamma}$ is convex, we conclude that $2K$ is the minimum, which means $\gamma \geq 2K$. Therefore
        $$\frac{1}{2} \int_{\partial B(x,r)} \eta^2_\varepsilon |\nabla |u||^2 + \eta^4_\ep \frac{(1-|u|^2)^2}{2\varepsilon^2} \geq C \underline{\eta^2_\varepsilon}(B) \frac{(1-m)^2}{\varepsilon},$$
        which means $\eqref{energy bound on boundary of ball}$ holds in all cases.
    \end{proof}
    Recall the set $\Omega_\varepsilon = \{x \in \Omega \colon \operatorname{dist}(x,\partial \Omega) > \varepsilon\}$. Define $S=\{x \in \Omega_\ep \colon |u| \leq \frac{1}{2}\}$, and $S_E$ as the union of connected components $S_i$ of $\{|u| \leq 1/2\}$ with nonzero boundary degree. In addition, for a compact set $K \subseteq \Omega$ such that $\partial K \cap S_E = \emptyset$, we let
    $$\deg_E(u, \partial K) \colonequals \sum_{i} \deg(u,S_i).$$
    
    Applying the previous lemma, we obtain the following result.
    \begin{lemma}\label{initial-balls}
        There exists a (finite) 
        collection of disjoint closed balls $\{B_i\}_i = \{B(a_i,r_i)\}_i$ such that
        \begin{enumerate}[leftmargin=*,label={\normalfont (\arabic*)}]
            \item For each $i$, $r_i \geq \frac{\varepsilon}{\underline{\eta^2_\varepsilon}(B_i)}$. 
            \item $S_E \cap \Omega_\varepsilon \subseteq \cup_{i} B_i$.
            \item There exists a universal constant $c_1>0$ such that, for each $i$, we have 
            $$F_{\varepsilon, \eta_\varepsilon, \Omega \cap B_i}(u,A) \geq c_1\underline{\eta^2_\ep}(B) \frac{r_i}{\varepsilon}.$$
        \end{enumerate}
        \end{lemma}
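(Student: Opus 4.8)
The plan is to follow the initial--ball constructions of Jerrard \cite{jerrard} and Sandier--Serfaty \cite{mass-displacement}, the only genuinely new point being the bookkeeping of the weight $\eta_\ep$. First I would reduce to the scalar part of the energy: since $|\nabla_A u|\ge|\nabla|u||$ pointwise and $|\curl A|^2\ge 0$, for every ball $B$ one has
\[
F_{\ep,\eta_\ep,\Omega\cap B}(u,A)\ \ge\ \frac12\int_{\Omega\cap B}\eta_\ep^2|\nabla|u||^2+\eta_\ep^4\frac{(1-|u|^2)^2}{2\ep^2}=:G_\ep(u;B).
\]
Since $S_E\cap\Omega_\ep$ is a closed subset of a compact subset of $\Omega$, it suffices to attach to each of its points a ball carrying a definite amount of $G_\ep$ and then to extract a finite disjoint subfamily by a Besicovitch covering argument; this last step costs only a universal multiplicative constant and is insensitive to the weight.

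For a fixed $a\in S_E\cap\Omega_\ep$, so that $|u(a)|\le\frac12$, I would grow the concentric ball $B(a,t)$ and integrate the circle estimate radially. Writing $m(t)=\min_{\partial B(a,t)}|u|$ and using the coarea formula for the radial foliation,
\[
G_\ep(u;B(a,r))=\int_0^r\Big(\frac12\int_{\partial B(a,t)}\eta_\ep^2|\nabla|u||^2+\eta_\ep^4\frac{(1-|u|^2)^2}{2\ep^2}\Big)\,dt.
\]
On the set of radii for which the circle still meets the bad set, i.e. $m(t)\le\frac12$ so that $1-m(t)\ge\frac12$, Lemma \ref{cota frontera de bola} bounds the inner integral below by $c_0\,\underline{\eta^2_\ep}(B(a,t))\,(1-m(t))^2/\ep\ge\frac{c_0}{4}\,\underline{\eta^2_\ep}(B(a,t))/\ep$, provided $t\ge\ep/\underline{\eta^2_\ep}(B(a,t))$. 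Here the key observation forced by the weight is the monotonicity $\underline{\eta^2_\ep}(B(a,t))\ge\underline{\eta^2_\ep}(B(a,r))$ for $t\le r$ (the minimum can only drop on a larger set), which lets me replace the integrand weight by that of the final ball. Taking $r_a$ equal to the first radius (capped by $\ep/b$) beyond which the circles cease to meet $\{|u|\le\frac12\}$, the interval of bad radii has length comparable to $\ep/\underline{\eta^2_\ep}(B(a,r_a))$, and the integration yields $G_\ep(u;B(a,r_a))\ge c_1\,\underline{\eta^2_\ep}(B(a,r_a))\,r_a/\ep$, which is exactly (3); the admissibility $r_a\ge\ep/\underline{\eta^2_\ep}(B(a,r_a))$ gives (1) and $a\in B(a,r_a)$ gives (2).

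The main obstacle is the case in which the circles around $a$ cease to meet $\{|u|\le\frac12\}$ already below the admissible threshold $\ep/\underline{\eta^2_\ep}$, i.e.\ the connected component $S_i\ni a$ is smaller than the natural core size; there the circle estimate gives no information, since $m(t)$ has returned close to $1$. I would treat this exactly as in the unweighted construction, on the single minimal ball $B_a:=B\big(a,\ep/\underline{\eta^2_\ep}(B_a)\big)$, via the measure-theoretic dichotomy between $\{|u|\le\frac58\}$ and $\{|u|\ge\frac78\}$: if both meet $B_a$ in a set of measure comparable to $|B_a|$, a Poincar\'e inequality forces $\int_{B_a}|\nabla|u||^2\gtrsim1$; otherwise $(1-|u|^2)^2$ is bounded below on a definite fraction of $B_a$ and the potential term gives $\int_{B_a}(1-|u|^2)^2/\ep^2\gtrsim1$. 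In both situations $G_\ep(u;B_a)\ge c_1\,\underline{\eta^2_\ep}(B_a)=c_1\,\underline{\eta^2_\ep}(B_a)\,r/\ep$ with $r=\ep/\underline{\eta^2_\ep}(B_a)$, the weight entering only through the harmless factors $\eta_\ep\in[\sqrt b,1]$, which merely rescale the universal constants.

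Collecting the balls produced in the two cases and extracting a disjoint finite subfamily by the covering argument then completes the proof. The delicate points throughout are purely the compatibility of the $t$-dependent admissibility radius $\ep/\underline{\eta^2_\ep}(B(a,t))$ with the growth process and the monotonicity of $\Theta\mapsto\underline{\eta^2_\ep}(\Theta)$, both of which are precisely what distinguishes this argument from its homogeneous counterpart; everything else is a faithful transcription of \cite{jerrard} and \cite{mass-displacement}.
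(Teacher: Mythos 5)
Your proof breaks at the case that actually carries the degree information, and this gap cannot be repaired within your framework. The lemma covers $S_E$, the union of components of $\{|u|\le\frac12\}$ with \emph{nonzero} boundary degree, and for a component $S_i\ni a$ of diameter much smaller than $\ep$ that degree is the only source of a universal amount of energy. Your dichotomy gives nothing there: $\{|u|\le\frac58\}\cap B_a$ can have measure $o(|B_a|)$ while $|u|\approx1$ on the rest of $B_a$, so the Poincar\'e branch has no set of definite fraction to work with, and the potential branch has no pointwise lower bound on $(1-|u|^2)^2$. Worse, your very first reduction to the modulus energy $G_\ep$ is fatally lossy here: take $A=0$ and $u$ a degree-one vortex centered at $a$ whose modulus climbs from $0$ to $1$ logarithmically between the scales $\delta$ and $\sqrt\delta$, with $\delta\ll\ep^2$; then for every ball $B\ni a$ of radius $O(\ep/b)$ one has $G_\ep(u;B)\le C/|\log\delta|+C\delta/\ep^2=o(1)$, whereas items (1) and (3) of the lemma force $F_{\ep,\eta_\ep,\Omega\cap B_i}(u,A)\ge c_1\underline{\eta^2_\ep}(B_i)\,r_i/\ep\ge c_1$ on the ball containing $a$. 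So no lower bound on $G_\ep$ alone can prove the statement. The missing ingredient is exactly the one the paper's proof inserts, namely the weighted form of Jerrard's Lemma 3.2: since $|u|=\frac12$ on $\partial S_i$ and $d_i=\deg(u,\partial S_i)\ne0$, the image $u(S_i)$ covers the disk $B(0,\frac12)$ with multiplicity $|d_i|$, hence, writing $Ju=\det(\nabla u)$,
\[
\int_{S_i}\eta^2_\ep|\nabla u|^2\ \ge\ \underline{\eta^2_\ep}(S_i)\int_{S_i}|\nabla u|^2\ \ge\ 2\,\underline{\eta^2_\ep}(S_i)\left|\int_{S_i}Ju\right|\ \ge\ \frac{\pi}{2}\,\underline{\eta^2_\ep}(S_i)\,|d_i|,
\]
a bound independent of the size of $S_i$ that uses the full gradient, i.e.\ precisely the phase part that your inequality $|\nabla_A u|\ge|\nabla|u||$ discards. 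The paper's proof is exactly this: run Jerrard's construction of initial balls, replacing his circle estimate by Lemma \ref{cota frontera de bola} and his Lemma 3.2 by the display above, tracking the weight via the monotonicity $\underline{\eta^2_\ep}(\Theta_1)\ge\underline{\eta^2_\ep}(\Theta_2)$ for $\Theta_1\subseteq\Theta_2$.

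Your final extraction step is a second genuine gap. Neither Besicovitch nor Vitali produces a \emph{single disjoint} family of balls that still \emph{covers} $S_E\cap\Omega_\ep$: Vitali yields disjoint balls whose $5$-fold dilations cover (dilation then destroys disjointness, degrades the constant in (3), and can break (1), since $\underline{\eta^2_\ep}(5B)\le\underline{\eta^2_\ep}(B)$ raises the admissibility threshold), while Besicovitch yields boundedly many disjoint subfamilies whose \emph{union} covers. The ball-construction method resolves this tension by \emph{merging}, not by a covering lemma: whenever two balls intersect, they are replaced by one ball containing both, of radius $r_1+r_2$; property (3) survives because energies over disjoint balls add, radii add, and $\underline{\eta^2_\ep}(B)\le\min\{\underline{\eta^2_\ep}(B_1),\underline{\eta^2_\ep}(B_2)\}$ --- the weighted monotonicity you isolated, but deployed in the merging step rather than in a covering argument. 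Finally, a smaller slip: with $r=\ep/\underline{\eta^2_\ep}(B_a)$ one has $\underline{\eta^2_\ep}(B_a)\,r/\ep=1$, so your claimed identity $c_1\underline{\eta^2_\ep}(B_a)=c_1\underline{\eta^2_\ep}(B_a)\,r/\ep$ is false; that branch would only give (3) with the non-universal constant $c_1b$.
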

    \begin{proof}
        The proof is a slight modification of the proof of \cite{jerrard}*{Proposition 3.3}. Indeed, by noting that from \cite{jerrard}*{Lemma 3.2}, we have
        $$\int_{S_i} \eta^2_\ep |\nabla u|^2\geq \underline{\eta^2_\ep}(S_i) \int_{S_i} |\nabla u|^2 \geq \frac{\underline{\eta^2_\ep}(S_i)}{C}|\deg(u,\partial S_i)|,$$
        the proof is exactly as the proof of \cite{jerrard}*{Lemma 3.3}, using of course \eqref{cota frontera de bola} instead of the lower bound in \cite{jerrard}*{Lemma 2.3} and the fact that $\underline{\eta^2_\ep}(\Theta_1)\geq \underline{\eta^2_\ep}(\Theta_2)$ for any closed sets such that $\Theta_1\subseteq \Theta_2$. The constant $c_1$ is the same as the constant $c_0$ in Jerrard's proof.
    \end{proof}
        From now on, we closely follow \cite{mass-displacement}*{Section 5}. 
    \begin{prop}
        For a small enough $c_2 \in (0,c_1)$, let $$\lambda_\varepsilon(x)=\min\left( \frac{c_2}{\varepsilon}, \frac{\pi}{x} \frac{1}{1+\frac{x}{2}+\frac{\pi \varepsilon}{c_0 x}} \right).$$
        Then, for any closed ball $B = B(a,r)$ such that $B \subset \Omega_\varepsilon$, $\partial B \cap S_E = \emptyset$, and $\frac{\varepsilon}{\underline{\eta^2_\varepsilon}(B)} \leq r \leq \frac{|d|}{2}$, where $d = \deg_{E}(u,\partial B) \neq 0$,  we have
        \begin{equation}\label{rd lower bound}
            \frac{1}{2} \int_{\partial B} \eta^2_\varepsilon |\nabla_A u|^2 + \eta^4_\ep \frac{(1-|u|^2)^2}{2\varepsilon^2}  + \frac{1}{2} \int_B |\operatorname{curl} A|^2  \geq \underline{\eta^2_\varepsilon}(B) \lambda_\varepsilon \left(\frac{r}{|d|} \right).
        \end{equation}
        Moreover, $\Lambda_\varepsilon(s) \colonequals \int_0^s \lambda_\varepsilon$  is increasing, the function $s \to \frac{\Lambda_\ep(s)}{s}$ is decreasing and it satisfies 
        $$\lim_{s \to 0} \frac{\Lambda_\ep(s)}{s} < \frac{c_1}{\ep}, \quad \frac{\Lambda_\ep(\ep)}{\ep}>\frac{c_3}{\ep},$$
        for some sufficiently small constant $c_3$. Finally, for any $s \in \left(\ep, \frac{1}{2}\right)$ and some $C_0>0$ we have
        $$\Lambda_\ep(s) \geq \pi \log \frac{s}{\ep} - C_0.$$
    \end{prop}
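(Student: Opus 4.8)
The plan is to reduce the weighted circle estimate \eqref{rd lower bound} to its unweighted counterpart in \cite{mass-displacement} by factoring out the minimal weight $\beta \colonequals \underline{\eta^2_\varepsilon}(B)$, and then to verify the analytic properties of $\Lambda_\ep$ by elementary one-variable calculus. Throughout I will keep in mind that the only role of the weight is to shift the effective core scale from $\varepsilon$ to $\varepsilon/\beta$, which is exactly what the hypothesis $r \geq \varepsilon/\beta$ is there to absorb.

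First I would establish \eqref{rd lower bound}. On $\partial B$, away from the essential bad set $S_E$ (so that the phase is meaningful and the winding equals $d = \deg_E(u,\partial B)$), I split the covariant density into its radial and tangential parts, $|\nabla_A u|^2 = |\nabla|u||^2 + |u|^2|\partial_\tau\phi - A_\tau|^2$. The radial part together with the potential is bounded below \emph{exactly} by Lemma \ref{cota frontera de bola}: because we assume $r \geq \varepsilon/\beta$, that lemma already outputs $c_0\beta(1-m)^2/\varepsilon$, i.e. $\beta$ times the homogeneous bound carrying the \emph{true} $\varepsilon$, where $m = \min_{\partial B}|u|$. For the tangential part, the degree constraint $\int_{\partial B}(\partial_\tau\phi - A_\tau) = 2\pi d - \int_B \curl A$ and Cauchy--Schwarz give $\frac12\int_{\partial B}|u|^2|\partial_\tau\phi - A_\tau|^2 \geq \frac{m^2}{4\pi r}\bigl(2\pi d - \int_B\curl A\bigr)^2$, while the magnetic term satisfies $\frac12\int_B|\curl A|^2 \geq \frac{1}{2\pi r^2}\bigl(\int_B\curl A\bigr)^2$. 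Since $\eta_\varepsilon^2 \geq \beta$ on $\partial B$ and $\beta \leq 1$, every contribution carries at least a factor $\beta$, so after pulling $\beta$ out the inequality to prove is the purely homogeneous circle bound.

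The homogeneous bound is then obtained exactly as in \cite{mass-displacement}: optimizing the sum of the tangential and magnetic contributions over the flux $D = \int_B\curl A$ produces the factor $(1+m^2 r)^{-1}$, which after setting $x = r/|d|$ yields the $\tfrac{x}{2}$ term in the denominator of $\lambda_\ep$; balancing the resulting degree term against the radial--potential cost $c_0(1-m)^2/\varepsilon$ through the choice of $m \in [0,1]$ produces the $\tfrac{\pi\varepsilon}{c_0 x}$ term; and capping the expression near the vortex core (small $x$) by $c_2/\varepsilon$ gives the outer minimum. I expect this balancing/optimization step — checking that $\beta$ factors cleanly through the magnetic minimization rather than contaminating the $\lambda_\ep$-profile — to be the main obstacle; the phase-representation issue on zero-degree components of $\{|u|\le\tfrac12\}$ that happen to meet $\partial B$ is a technical point handled as in \cite{mass-displacement}.

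Finally I would record the properties of $\Lambda_\ep(s) = \int_0^s \lambda_\ep$. Monotonicity of $\Lambda_\ep$ is immediate from $\lambda_\ep \geq 0$. Writing $g(x) = \pi/\bigl(x + \tfrac{x^2}{2} + \pi\varepsilon/c_0\bigr)$, the denominator increases, so $g$, and hence $\lambda_\ep = \min(c_2/\varepsilon, g)$, is non-increasing; consequently $\Lambda_\ep(s) \geq s\lambda_\ep(s)$, which is precisely $\frac{d}{ds}\bigl(\Lambda_\ep(s)/s\bigr) \leq 0$, so $s \mapsto \Lambda_\ep(s)/s$ is decreasing. Since $\lambda_\ep \leq c_2/\varepsilon$ pointwise we obtain $\Lambda_\ep(s)/s \leq c_2/\varepsilon < c_1/\varepsilon$, giving the limit bound. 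For the two lower bounds I use $g(x) \geq \tfrac{\pi}{x} - \tfrac{\pi}{2} - \tfrac{\pi^2\varepsilon}{c_0 x^2}$: on $[0,\varepsilon]$ the denominator of $g$ is $O(\varepsilon)$, so $\lambda_\ep \geq c_3/\varepsilon$ there and thus $\Lambda_\ep(\varepsilon) \geq c_3$; and on $(\varepsilon, s) \subset (\varepsilon, \tfrac12)$ integrating the displayed lower bound for $g$ — while absorbing into $C_0$ the $O(1)$ loss on the region $x \lesssim \varepsilon$ where the cap $c_2/\varepsilon$ is active — yields $\Lambda_\ep(s) \geq \pi\log\frac{s}{\varepsilon} - C_0$.
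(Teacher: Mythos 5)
Your proposal is correct and takes essentially the same route as the paper: the paper's proof simply defers to \cite{mass-displacement}*{Proposition 5.1}, observing---exactly as you do---that the weight factors out cleanly, via Lemma \ref{cota frontera de bola} (whose hypothesis $r\geq \varepsilon/\underline{\eta^2_\varepsilon}(B)$ produces $\underline{\eta^2_\varepsilon}(B)$ times the homogeneous radial/potential bound carrying the true $\varepsilon$) and via $\underline{\eta^2_\varepsilon}(B)\leq 1$ for the magnetic term, after which the Sandier--Serfaty flux/modulus optimization and the one-variable calculus for $\Lambda_\varepsilon$ go through unchanged. Your only slips are cosmetic: the flux minimization yields the factor $\bigl(1+\tfrac{m^2 r}{2}\bigr)^{-1}$ rather than $(1+m^2 r)^{-1}$, and the radial/tangential splitting of $|\nabla_A u|^2$ on $\partial B$ is an inequality (one drops the radial component of $|\nabla\phi - A|^2$), not an identity.
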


    \begin{proof}
        The proof is almost exactly as the proof of \cite{mass-displacement}*{Proposition 5.1}. In fact, the functions $\lambda_\ep, \Lambda_\ep$ are the same as in this proof, and since $\eta_\ep \leq 1$, we have $|\curl A|^2 \geq \underline{\eta^2_\ep}(B) |\curl A|^2$. Hence, we only need to carry around the weight $\underline{\eta^2_\ep}(B)$ and mimic the proof of \cite{mass-displacement}*{Proposition 5.1}.
    \end{proof} 

    With these estimates at hand, the ball construction procedure of growing and merging balls yields the following result.
    \begin{prop}\label{ball construction}
        For any $s\in\left(0,\frac12\right)$, there exists a collection of disjoint closed balls $\mathcal{B}(s)$, depending only on $u$, such that
        \begin{enumerate}[leftmargin=*,label={\normalfont (\arabic*)}]
            \item $\mathcal{B}(s) \subset \mathcal{B}(t)$ for $s<t$ and the total radius of the collection is continuous with respect to $s$.

            \item $S_E \subseteq \mathcal{B}(s)$, for any $s$.

            \item For any $B=B(a,r) \in \mathcal{B}(s)$, $$F_{\ep, \eta_\ep, B}(u,A) \geq \underline{\eta^2_\varepsilon}(B) r \frac{\Lambda_\varepsilon(s)}{s}.$$

            \item For any $B=B(a,r) \in \mathcal{B}(s)$ such that $B \subset \Omega_\varepsilon$, we have $r \geq s|d_B|$, where $d_B = \deg_E(u,\partial B)$.
        \end{enumerate}
    
    \end{prop}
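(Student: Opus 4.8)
The plan is to run the classical growth-and-merge ball construction of Jerrard and Sandier--Serfaty, following \cite{mass-displacement}*{Section 5}, while carrying the multiplicative weight $\underline{\eta^2_\ep}(B)$ through every step. I would initialize the process with the finite collection of disjoint balls furnished by Lemma \ref{initial-balls}: these cover $S_E \cap \Omega_\ep$, satisfy $r_i \geq \ep/\underline{\eta^2_\ep}(B_i)$, and already obey a lower bound of the form (3) at the initial scale (since $c_2 < c_1$ and $\Lambda_\ep(\ep)/\ep > c_3/\ep$). Starting from this configuration, I would then let the balls evolve continuously in a scale parameter $s \in (0,\tfrac12)$, alternating \emph{growth} phases, during which all balls are dilated while remaining disjoint, with \emph{merging} phases, during which any two balls that become tangent are absorbed into a single enclosing ball.

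For the growth phase, fix a ball $B = B(a,r)$ of enclosed degree $d = \deg_E(u,\partial B) \neq 0$ and, as long as $B \subset \Omega_\ep$, increase $r$ keeping $r = s|d|$ (this is what yields property (4), $r \geq s|d_B|$). Writing $f(r) = F_{\ep,\eta_\ep,B(a,r)}(u,A)$ and foliating the swept annulus by concentric circles, the circle lower bound \eqref{rd lower bound} drives the differential inequality for $f$, exactly as in \cite{mass-displacement}; since $\tfrac{d}{dr}\bigl(\underline{\eta^2_\ep}(B)\,|d|\,\Lambda_\ep(r/|d|)\bigr) = \underline{\eta^2_\ep}(B)\lambda_\ep(r/|d|)$ and, by the stated properties of $\Lambda_\ep$, the map $s \mapsto \Lambda_\ep(s)/s$ is decreasing, one recovers $f(r) \geq \underline{\eta^2_\ep}(B)\,r\,\Lambda_\ep(s)/s$, which is exactly property (3). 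The coverage property (2) and the monotonicity and continuity of $\mathcal{B}(s)$ in (1) are immediate from the fact that the balls only ever grow.

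For the merging phase, when finitely many balls $B(a_i,r_i) \in \mathcal{B}(s)$ become tangent I replace them by a single ball $B = B(a,r)$ with $r = \sum_i r_i$, so that the total radius is preserved (continuity in (1)) and, by additivity of the degree, $d = \deg_E(u,\partial B) = \sum_i d_i$, whence $r = \sum_i r_i \geq s\sum_i|d_i| \geq s|d|$ recovers (4). The lower bound (3) passes to $B$ because the free energy is additive over the disjoint $B_i$, while $\underline{\eta^2_\ep}(B) \leq \underline{\eta^2_\ep}(B_i)$ (the minimum of $\eta_\ep^2$ over the larger set is smaller), so that $F_{\ep,\eta_\ep,B} \geq \sum_i \underline{\eta^2_\ep}(B_i)\,r_i\,\Lambda_\ep(s)/s \geq \underline{\eta^2_\ep}(B)\bigl(\sum_i r_i\bigr)\Lambda_\ep(s)/s$.

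The only genuine departure from \cite{mass-displacement} — and the point requiring care — is the consistent propagation of the weight $\underline{\eta^2_\ep}(B)$. The two facts that make this bookkeeping succeed are that $\eta_\ep \leq 1$, so that $|\curl A|^2 \geq \underline{\eta^2_\ep}(B)\,|\curl A|^2$ and the magnetic term may be absorbed into the weighted circle estimate, and the monotonicity $\underline{\eta^2_\ep}(\Theta_1) \geq \underline{\eta^2_\ep}(\Theta_2)$ whenever $\Theta_1 \subseteq \Theta_2$. Crucially, as a ball grows or as several balls merge, the relevant weight can only decrease, which is precisely the direction favorable to a lower bound; hence using the current (smaller) weight $\underline{\eta^2_\ep}(B)$ throughout is consistent and never costs anything. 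With these observations in place, every estimate in Sandier and Serfaty's argument goes through unchanged up to the factor $\underline{\eta^2_\ep}(B)$, yielding the four asserted properties.
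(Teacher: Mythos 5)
Your proposal is correct and follows essentially the same route as the paper's proof: initialize with the collection of Lemma \ref{initial-balls}, then run the Jerrard/Sandier--Serfaty growth-and-merge process, propagating the weight through \eqref{rd lower bound} together with the monotonicity $\underline{\eta^2_\ep}(B') \geq \underline{\eta^2_\ep}(B)$ for $B' \subseteq B$ and the fact that $\eta_\ep \leq 1$ handles the magnetic term. If anything, your merging estimate $F_{\ep,\eta_\ep,B}(u,A) \geq \sum_i \underline{\eta^2_\ep}(B_i)\, r_i\, \Lambda_\ep(s)/s \geq \underline{\eta^2_\ep}(B)\bigl(\sum_i r_i\bigr)\Lambda_\ep(s)/s$ spells out the inclusion-monotonicity of the weight more cleanly than the paper's own write-up of that step.
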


    \begin{proof}
        The proof follows the process of growing and merging balls described in \cite{jerrard}*{Proposition 4.1} and \cite{mass-displacement}*{Proposition 5.2}. Let $\mathcal{B} = \{B_i\}_i = \{B(a_i,r_i)\}$ be the collection given by Lemma \ref{initial-balls}. We start by choosing $s_0<\frac{1}{2}$ small enough so that the balls in $\mathcal{B}$ satisfy items 3 and 4 (item 2 is obviously also satisfied). In particular, for each $B = B(a,r) \in \mathcal{B}$ we have
        $$F_{\ep, \eta_\ep, B}(u,A) \geq c_1 \underline{\eta^2_\varepsilon}(B) \frac{r}{\ep} \geq \underline{\eta^2_\varepsilon}(B) r \frac{\Lambda_\varepsilon(s_0)}{s_0}.$$ 
        We construct the collection $\mathcal{B}(s)$ as follows. For $s \leq s_0$, we let $\mathcal{B}(s) = \mathcal{B}$. Then, as $s$ increases, we let the radius of each ball grow so that $r_i = s|d_{B_i}|$. Observe that the bound of item 3 is preserved during the growth process, which follows from \eqref{rd lower bound} and the fact that $\underline{\eta^2_\ep}(B_i(s)) \geq \underline{\eta^2_\ep} (B_i(t))$ for $s < t$ (since $B_i(s)\subset B_i(t)$). If at a moment two balls $B_1 = B(a_1,r_1)$ and $B_2 = B(a_2,r_2)$ intersect each other, we merge these balls into a larger ball that contains them with a radius equal to the sum of the radii of the merged balls. This ball can be explicitly written as $B= B\left(\frac{a_1 r_1 + a_2 r_2}{r_1+r_2}, r_1+r_2\right)$. The bound of item 3 still holds after the merging process, since $|d_B| \leq |d_{B_1}| + |d_{B_2}|$ and $\underline{\eta^2_\ep}(B) \leq \underline{\eta^2_\ep}(B_1) + \underline{\eta^2_\ep}(B_2)$. This process of growing and merging continues as long as \eqref{rd lower bound} can be satisfied, that is, until $s = \frac{1}{2}$.
    \end{proof}
    Finally, we state our main energy estimate, which generalizes \cite{mass-displacement}*{Proposition 2.1} to the case of a weighted Ginzburg--Landau type energy.
    \begin{prop}\label{ball lower bound}
        There exist $\varepsilon_0, C > 0$ such that for any $\varepsilon < \varepsilon_0$ and $(u,A)$ such that
        $$\fen(u,A) \leq \varepsilon^{-\beta},$$
        where $\beta \in (0,1)$, the following holds. For every $r \in (C \varepsilon^{1-\beta},\frac{1}{2})$ there exists a collection of disjoint closed balls $\mathcal{B} = \{B_i\}_i = \{B(a_i,r_i)\}$ such that
        \begin{enumerate}[leftmargin=*,label={\normalfont (\arabic*)}]
            \item $\{x \in \Omega_\varepsilon \colon ||u|-1| \geq \frac{1}{2}\} \subseteq \cup_i B_i$. %
            \item $\sum_i r_i \leq r$.
            \item For any $2b \leq \overline{C}\leq \left(\frac{r}{\ep}\right)^{\frac{1}{2}}$ it holds that either 
            $$F_{\varepsilon, \eta_\varepsilon, \Omega \cap \mathcal{B}}(u,A) \geq \overline{C} \log \frac{r}{\varepsilon},$$
            or, for each $B \in \mathcal{B}$ such that $B \subset \Omega_\ep$,
            $$
            F_{\ep, \eta_\ep, B}(u,A) \geq \pi  \underline{\eta^2_\ep}(B)|d_B| \left( \log \frac{r}{\ep \overline{C} } - C \right),
            $$
            where $\underline{\eta^2_\ep}(B)=\min_B \eta^2_\ep$ and $d_B = \mathrm{deg}(u,\partial B)$. 
        \end{enumerate}
    \end{prop}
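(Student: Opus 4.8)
The plan is to deduce this statement from Proposition \ref{ball construction} in the same way that \cite{mass-displacement}*{Proposition 2.1} is deduced from \cite{mass-displacement}*{Proposition 5.2}, carrying the minimum weight $\underline{\eta^2_\ep}(B)\in[b,1]$ through each estimate. First I would record the effect of the energy bound on the initial collection: summing item (3) of Lemma \ref{initial-balls} over the disjoint initial balls and using $\underline{\eta^2_\ep}(B_i)\ge b$ gives $\sum_i r_i\le \frac{\ep}{c_1 b}\fen(u,A)\le C\ep^{1-\beta}$, so the initial total radius is negligible. Since, by item (1) of Proposition \ref{ball construction}, the total radius of $\mathcal{B}(s)$ is continuous and non-decreasing in $s$ and starts below $C\ep^{1-\beta}$, while it grows without bound (compared to $r$) before $s=\tfrac12$, the hypothesis $r>C\ep^{1-\beta}$ guarantees a scale $s_r\le\tfrac12$ at which the total radius of $\mathcal{B}(s_r)$ equals $r$. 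I set $\mathcal{B}:=\mathcal{B}(s_r)$. Items (1) and (2) are then immediate: recalling that $|u|\le 1$ so that $\{\,||u|-1|\ge\tfrac12\,\}=\{|u|\le\tfrac12\}$, the bad set is covered by the initial balls (augmenting by balls around the zero-degree components, whose total radius is again $O(\ep^{1-\beta})$), and this covering is preserved under growing and merging, while $\sum_i r_i\le r$ holds by the choice of $s_r$.

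The core is the dichotomy in item (3), which I would obtain by comparing $s_r$ with $r/\overline C$. If $s_r\ge r/\overline C$, then for each $B=B(a,\rho_B)\in\mathcal B$ with $B\subset\Omega_\ep$ I combine item (3) of Proposition \ref{ball construction} with the radius--degree bound $\rho_B\ge s_r|d_B|$ of item (4):
$$
F_{\ep,\eta_\ep,B}(u,A)\ \ge\ \underline{\eta^2_\ep}(B)\,\rho_B\,\frac{\Lambda_\ep(s_r)}{s_r}\ \ge\ \underline{\eta^2_\ep}(B)\,|d_B|\,\Lambda_\ep(s_r),
$$
and the lower bound $\Lambda_\ep(s)\ge\pi\log\frac{s}{\ep}-C_0$ together with $s_r\ge r/\overline C$ yields $\Lambda_\ep(s_r)\ge\pi\big(\log\frac{r}{\ep\overline C}-C\big)$, which is exactly the second alternative. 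If instead $s_r<r/\overline C$, I pass to the total energy: summing item (3) over $\mathcal B$ and using $\underline{\eta^2_\ep}(B)\ge b$ and $\sum_B\rho_B=r$ gives $F_{\ep,\eta_\ep,\Omega\cap\mathcal B}(u,A)\ge b\,r\,\frac{\Lambda_\ep(s_r)}{s_r}$; since $s\mapsto\Lambda_\ep(s)/s$ is decreasing and $s_r<r/\overline C$, this is at least $b\,\overline C\,\Lambda_\ep(r/\overline C)$, and finally $\Lambda_\ep(r/\overline C)\ge\pi\log\frac{r}{\ep\overline C}-C_0\ge\tfrac{\pi}{2}\log\frac{r}{\ep}$, where the last step uses $\overline C\le(r/\ep)^{1/2}$ so that $\log\frac{r}{\ep\overline C}\ge\tfrac12\log\frac{r}{\ep}$. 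This produces the first alternative, $F_{\ep,\eta_\ep,\Omega\cap\mathcal B}(u,A)\ge\overline C\log\frac r\ep$, up to the multiplicative constant coming from the weight.

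The routine but delicate part is precisely this bookkeeping of the weight: because I repeatedly replace $\underline{\eta^2_\ep}(B)$ by $b$, the first alternative is initially obtained with a factor $\tfrac{\pi b}{2}$ in front of $\overline C\log\frac r\ep$. I would absorb this using the admissible range $\overline C\ge 2b$ exactly as in \cite{mass-displacement} (prove the dichotomy for a rescaled threshold and translate back), so that no new phenomenon appears relative to the homogeneous case. The only genuinely new verifications are that every inequality of \cite{mass-displacement}*{Section 5} survives multiplication by $\underline{\eta^2_\ep}(B)$ --- which is built into Lemma \ref{cota frontera de bola}, into the monotonicity $\underline{\eta^2_\ep}(\Theta_1)\ge\underline{\eta^2_\ep}(\Theta_2)$ for $\Theta_1\subseteq\Theta_2$ used in the merging step, and into item (3) of Proposition \ref{ball construction} --- and that the two geometric facts $\rho_B\ge s_r|d_B|$ and $\sum_B\rho_B=r$ are unaffected by the weight. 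The main obstacle is therefore conceptual rather than computational: setting up the stopping scale $s_r$ and the comparison with $r/\overline C$ so that the two alternatives glue into the stated dichotomy, while keeping the weight factors $\underline{\eta^2_\ep}(B)\in[b,1]$ under control throughout.
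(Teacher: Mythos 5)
Your proposal is correct and takes essentially the same route as the paper, whose proof consists precisely of rerunning the argument of \cite{mass-displacement}*{Proposition 2.1} (deducing it from the weighted ball growth, Proposition \ref{ball construction}, via the stopping-scale dichotomy you describe) while carrying the weight $\underline{\eta^2_\ep}(B)\in[b,1]$ along. Your handling of the only genuinely weight-sensitive point---the factor $b$ picked up in the first alternative, absorbed by running the comparison at the rescaled threshold $\overline{C}/b\geq 2$ and translating back at the cost of a $\log(1/b)$ inside the constant $C$, which is what the admissible range $\overline{C}\geq 2b$ is designed for---is exactly the care that the paper's one-line proof and its surrounding remarks on the threshold for $\overline{C}$ implicitly require.
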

    \begin{proof} 
        The proof is exactly as the proof of \cite{mass-displacement}*{Proposition 2.1}. We only need to carry around the weight $\underline{\eta^2_\ep}(B)$ throughout the argument. 
    \end{proof}
    \begin{remark}
        Let us remark that \cite{mass-displacement}*{Proposition 2.1} states that the ball collection covers the set $\{x \in \Omega_\ep \colon |u(x)| < \frac{1}{2}\}$, contrary to what we have written here. However, a careful inspection of the proof reveals that the ball collection is obtained by merging with a cover of the set $\{x \in \Omega_\ep \colon |1-|u|| \geq \frac12\}$ given by \cite{libro-ss}*{Proposition 4.8}. This proposition also holds in the inhomogeneous case, since $b\leq \eta_\ep^2\leq 1$, which in turn gives $F_{\varepsilon, \eta_\varepsilon, \Omega }(u,A)\leq F_\ep(u,A)\leq b^{-1}F_{\varepsilon, \eta_\varepsilon, \Omega }(u,A)$.
    \end{remark}

    \begin{remark}
        In the situation where $d_B\neq 0$ for some $B\subset \Omega_\ep$, a natural choice for $\overline{C}$ is $\pi \tilde D$, where $\tilde D\colonequals \sum_{B\in \mathcal B\cap \Omega_\ep} \underline{\eta^2_\ep}(B)|d_B|$. With this choice, in all cases we have
            \begin{equation}\label{free energy lower bound equation}
                F_{\varepsilon, \eta_\varepsilon, \Omega \cap \mathcal{B}}(u,A) \geq \pi \Tilde{D} \left(\log \frac{r}{\ep \Tilde{D}} - C \right).
            \end{equation}
        Notice that this choice is possible since in this case $\bar C\geq \pi b>2b$. Moreover, if $d_B=0$ for every $B\subset \Omega_\ep$, then \eqref{free energy lower bound equation} still holds, since the RHS vanishes. Moreover, under the assumptions of Proposition \ref{ball lower bound}, we deduce from \eqref{free energy lower bound equation} and $r > C\ep^{1-\beta}$, that
        \begin{equation}\label{degree bound}
            \sum_i |d_{B_i}| \leq C\frac{\fen(u,A)}{\beta|\log \ep|},
        \end{equation}
        where $C>0$ is a constant that does not depend on $\ep$.
    \end{remark}

    \begin{remark}
        In \cite{mass-displacement}*{Proposition 2.1}, $\overline C$ must be larger than or equal to $2$. However, a careful inspection reveals that one can replace $2$ by any universal constant in $(0,\pi)$ and the argument of proof holds exactly the same. Notice that when $\eta_\ep\equiv 1$, $\pi\Tilde D\geq \pi$, and therefore we need to be able to choose $\overline C\geq \pi$ in order to obtain \eqref{free energy lower bound equation}. Of course, the condition $\overline C\geq 2$ makes this choice possible, but the same holds for any constant in $(0,\pi)$.
    \end{remark}
      
\bibliography{references2DPinning}
\end{document}